\documentclass[12pt]{amsart}

\usepackage{bbm, amsmath, amsfonts, amscd, latexsym, amsthm, amssymb, graphicx,mathrsfs, mathabx, comment}
\usepackage[hang,flushmargin]{footmisc}
\usepackage[colorlinks=true]{hyperref}
\usepackage{abstract}
\usepackage{mathtools}
\usepackage{caption}
\usepackage{tikz}
\usepackage{float}

\usetikzlibrary{patterns}
\usepackage{color, soul}

\usepackage{tikz-cd}
\tikzset{
	symbol/.style={
		draw=none,
		every to/.append style={
			edge node={node [sloped, allow upside down, auto=false]{$#1$}}}
	}
}

\makeatletter
\newif\if@check@engine  \@check@enginetrue 
\makeatother
\usepackage[usenames,dvipsnames]{pstricks}

\newtheorem{theor}{\hspace{1cm}{\sc Theorem}}[section]
\newtheorem{utver}[theor]{\hspace{1cm}{\sc Proposition}}
\newtheorem{predpol}[theor]{\hspace{1cm}{\sc Assumption}}
\newtheorem{sledst}[theor]{\hspace{1cm}{\sc Corollary}}
\newtheorem{lemma}[theor]{\hspace{1cm}{\sc Lemma}}
\newtheorem{conj}[theor]{\hspace{1cm}{\sc Conjecture}}
\newtheorem*{utver*}{\hspace{1cm}{\sc Proposition}}
\theoremstyle{definition}

\newtheorem{defin}[theor]{\hspace{1cm}{\sc Definition}}
\newtheorem{exa}[theor]{\hspace{1cm}{\sc Example}}
\newtheorem{observ}[theor]{\hspace{1cm}{\sc Observation}}
\newtheorem{rem}[theor]{\hspace{1cm}{\sc Remark}}

\newtheorem{question}[theor]{\hspace{1cm}{\sc Question}}

\newcommand{\ind}{\mathop{\rm ind}\nolimits}

\newcommand{\codim}{\mathop{\rm codim}\nolimits}

\newcommand{\sing}{\mathop{\rm sing}\nolimits}

\newcommand{\rk}{\mathop{\rm rk}\nolimits}
\newcommand{\crk}{\mathop{\rm crk}\nolimits}

\newcommand{\vol}{\mathop{\rm Vol}\nolimits}
\newcommand{\conv}{\mathop{\rm conv}\nolimits}

\newcommand{\Trop}{\mathop{\rm Trop}\nolimits}

\newcommand{\MV}{\mathop{\rm MV}\nolimits}

\newcommand{\MP}{\mathop{\rm MP}\nolimits}

\newcommand{\supp}{\mathop{\rm supp}\nolimits}

\addtolength{\oddsidemargin}{-0.6in}
\addtolength{\evensidemargin}{-0.6in}
\addtolength{\textwidth}{1.2in}

\renewcommand{\emph}[1]{{\it {\color{NavyBlue} #1}}}
\newcommand{\bigslant}[2]{{\raisebox{.2em}{$#1$}\left/\raisebox{-.2em}{$#2$}\right.}}

\def\R{\mathbb R}
\def\N{\mathbb N}
\def\Z{\mathbb Z}
\def\Q{\mathbb Q}
\def\C{\mathbb C}
\def\CC{({\mathbb C}^\star)}

\def\CP{\mathbb C\mathbb P}

\def\x{\times}
\newcommand{\ord}{\mathop{\rm ord}\nolimits}
\newcommand{\cork}{\mathop{\rm cork}\nolimits}
\emergencystretch=7pt

\setcounter{tocdepth}{1}

\begin{document}

\begin{center}
\title{Sparse curve singularities, singular loci of resultants, and Vandermonde matrices}
{\textsc{\Large
Sparse curve singularities, singular loci of resultants, and Vandermonde matrices
}}\\[3ex]

Alexander Esterov$^{1}$,
Evgeny Statnik$^{1}$,
Arina Voorhaar$^{2}$.\\[1ex]

{\footnotesize $^{1}$HSE University, Moscow, Russia\\
$^{2}$ Department of Mathematical Sciences, University of Copenhagen, Denmark}
\end{center}
{\let\thefootnote\relax\footnotetext{\noindent {\bf E-mail:} aesterov@hse.ru,
evstatnik@gmail.com, arina.voorhaar@gmail.com
\newline {\bf 2020 Mathematics Subject Classification:} 32S25, 14N10, 14M25
\newline {\bf Keywords:} resultants, singularities, sparse polynomials, tropicalization, Newton polytopes, algebraic link diagrams, Vandermonde matrices, Whitney stratifications, delta-invariant}}

\begin{abstract} We compute the $\delta$-invariant of a curve singularity parameterized by generic sparse polynomials. We apply this to describe topological types of generic singularities of sparse resultants and ``algebraic knot diagrams'' (i.e. generic algebraic spatial curve projections).

Our approach is based on some new results on zero loci of Schur polynomials, on transversality properties of maps defined by sparse polynomials, and on a new refinement of the notion of tropicalization of a curve (ultratropicalization), which may be of independent interest.
\end{abstract}

\tableofcontents

\section{Introduction} \label{Sintro}

In this paper, we discuss complex algebraic plane curve singularities up to their topological equivalence.

\subsection{$\delta$-invariants of sparse curve singularities} A sparse analytic function singularity  $\C^k\to\C$ is the singularity at 0 of a generic linear combination of a given set of monomials (called the support set). The motivation to study sparse singularities comes from the observation that many classes of ``the most interesting'' singularities (such as simple singularities) are sparse. Important invariants of sparse singularities were computed in terms of the support set (or its Newton polyhedron) by many authors, starting from Kouchnirenko's formula for the Milnor number \cite{kouchn}.

In the same way, it is natural to consider parameterized sparse singularities, e.g. define a sparse curve singularity as the singularity of a map $f:\C\to\C^k$, whose components are generic linear combinations of prescribed sets of monomials $B_1,\ldots,B_k\subset\Z^1_{\geqslant 0}$ (encoding a monomial by its degree). Again, most of classification problems for singularities of curves (as well as some other questions, see below) lead to lists of sparse singularities: see e.g. \cite{classif1}, \cite{classif2}, \cite{classif2a}, \cite{classif3}, \cite{classif3a}, \cite{classif3b}, \cite{classif4}, \cite{classif5}.

However, even the basic invariants of sparse curve singularities are not computed in terms of the support sets $B_1,\ldots,B_k$ so far. We do this for $k=2$ and conjecture the answer in general.

Let $d_i:=\min B_i>0$, and 
$j_r:={\rm GCD}\,\bigcup_i(B_i\cap[d_i,d_i+r])$. We assume that $j_\infty=1$ (or, equivalently, $f$ is an injective map germ).

\begin{theor}\label{thsparse}
1) The $\delta$-invariant of the curve singularity $$f:(\C,0)\to(\C^2,0)$$ whose components $(f_1,f_2)=f$ are generic analytic functions supported at $B_1$ and $B_2\subset\Z^1_{\geqslant 0}$, equals $$\delta_{B_1,B_2}:=\dfrac{(d_1-1)(d_2-1)}2+\sum\limits_{r=0}^{\infty}\dfrac{j_r-1}{2},$$ and thus the Milnor number equals $2\delta_{B_1,B_2}$.

2) More specifically, $f$ has the expected $\delta$-invariant if and only if it is 0-nondegenerate (see Definition \ref{def0nondeg}), and a strictly higher invariant otherwise.

3) Moreover, all 0-nondegenerate $f$ supported at $(B_1,B_2)$ are topologically equivalent to each other. This topological type will be called the $(B_1,B_2)$-sparse singularity. 
\end{theor}

\begin{conj}\label{conj11}
We expect that, for arbitrary $k$, the $\delta$-invariant of the sparse curve singularity in $\C^k$, whose components are supported at $B_1,\ldots,B_k$, equals $$(\mbox{a function of }d_1,\ldots,d_k)+\sum\limits_{r=0}^{\infty}\dfrac{j_r-1}{2}.$$
\end{conj}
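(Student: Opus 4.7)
The plan is to extend the strategy of Theorem 1.1 to arbitrary $k$ by combining a degeneration to the monomial parameterization with a recursive analysis of the value semigroup. \textbf{Setup.} For an injective germ $f : (\C, 0) \to (\C^k, 0)$, one has $\delta = |\N \setminus \Gamma_f|$, where $\Gamma_f := \{\ord_t(g \circ f) \mid g \in \mathcal{O}_{\C^k, 0}\}$ is the value semigroup. The problem reduces to computing $\Gamma_f$ for generic sparse $f$. In the monomial base case $f_i = t^{d_i}$ with $\gcd(d_1, \ldots, d_k) = 1$, one has $\Gamma_f = \langle d_1, \ldots, d_k \rangle$, so the gap count $|\N \setminus \langle d_1, \ldots, d_k \rangle|$ is the natural candidate for the ``function of $d_1, \ldots, d_k$'' in the conjectured formula; it specializes to $(d_1 - 1)(d_2 - 1)/2$ when $k = 2$ and $\gcd(d_1, d_2) = 1$.

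First I would show that $\delta_{B_1, \ldots, B_k}$ depends only on $(d_1, \ldots, d_k)$ and the sequence $(j_r)_{r \geq 0}$. The key claim is that any monomial $t^n \in B_i$ whose removal does not alter any $j_r$ is \emph{inessential}: for generic $f$ it can be absorbed by a holomorphic change of coordinates in the source, hence does not affect $\Gamma_f$. This is a higher-codimension extension of the transversality/0-nondegeneracy technique behind parts (2)--(3) of Theorem 1.1. The consequence is that the computation of $\Gamma_f$ becomes a purely combinatorial problem in the data $(d_i, j_r)$.

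Next I would run the ultratropicalization degeneration introduced in this paper. For each $r$ such that $j_{r-1} > j_r$, a new generator appears in $\Gamma_f$; an explicit semigroup computation shows that its net contribution to $|\N \setminus \Gamma_f|$ is $(j_{r-1} - j_r)/2$ additional gaps, and telescoping yields the total correction $\sum_{r \geq 0} (j_r - 1)/2$ above the monomial baseline. At each step of the iteration, upper semi-continuity of $\delta$ in a flat one-parameter family gives the inequality $\leq$, and a direct count in the value semigroup gives the matching $\geq$, exactly mirroring the structure of the $k = 2$ proof. Assembling the contributions gives the conjectured formula with $F(d_1, \ldots, d_k) = |\N \setminus \langle d_1, \ldots, d_k \rangle|$ in the coprime case.

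\textbf{Main obstacle.} The principal difficulty is to pin down $F(d_1, \ldots, d_k)$ uniformly, including when $\gcd(d_1, \ldots, d_k) > 1$. For $k \geq 3$, $F$ cannot be expressed in closed form: computing the number of gaps of a numerical semigroup with three or more generators is equivalent to the Frobenius problem, and no polynomial-in-$d_i$ formula analogous to $(d_1 - 1)(d_2 - 1)/2$ exists. Thus the best one can hope for is an implicit characterization of $F$ as the $\delta$-invariant of a canonical monomial degeneration; moreover, when $\gcd(d_i) > 1$ the monomial map is non-injective, and the $j_r$-correction must simultaneously encode the extra generators needed to restore injectivity, so a careful matching between the non-injective base case and the generic injective $f$ is required. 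A secondary obstacle is the genuine extension of the paper's transversality lemmas beyond codimension one, since the techniques for plane curve germs rely on two-dimensional features that do not persist in higher codimension.
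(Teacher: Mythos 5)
This statement is a conjecture that the paper deliberately leaves open: Remark 1.4.2 states that the authors' $k=2$ argument (reduction of $\delta$ to an intersection number of the divided-difference curves $F_i=0$ followed by a blow-up, Section 3) is topological and ``not applicable to the case $k>2$''. So there is no proof in the paper to compare against, and your text is a research programme rather than a proof. The general direction is reasonable and consistent with the authors' own hints: the value-semigroup formulation $\delta=|\N\setminus\Gamma_f|$ is the algebraic route they point to via Buchweitz--Greuel, and your identification of the leading term with the genus of the numerical semigroup $\langle d_1,\ldots,d_k\rangle$ matches Remark 1.4.3 exactly.

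However, two steps as written are genuinely gapped. First, the claim that any monomial whose removal preserves all $j_r$ is ``inessential'' because it ``can be absorbed by a holomorphic change of coordinates in the source'' is both unproved and too strong: a reparameterization of $t$ alone cannot absorb an arbitrary monomial of one component without disturbing the others, so you would need target coordinate changes as well, and establishing that generic coefficients admit a normal form depending only on $(d_i,j_r)$ is essentially the entire content of the conjecture, not a preliminary reduction. Second, the gap count is arithmetically inconsistent: if each jump $j_{r-1}>j_r$ contributed $(j_{r-1}-j_r)/2$ gaps, the total would telescope to a single term $(j_{-1}-1)/2$, whereas the conjectured correction $\sum_{r\geqslant 0}(j_r-1)/2$ weights each value of $j$ by the length of the plateau on which it is attained (e.g.\ $j_0=j_1=4$, $j_2=1$ gives $3$, not $3/2$). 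The correct contribution of each new semigroup generator must therefore depend on the exponent at which it appears, not only on the drop in the gcd, and this is precisely the part of the computation you have not carried out. Until these two points are addressed the argument does not establish the conjecture even modulo the admitted difficulty of describing $F(d_1,\ldots,d_k)$.
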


\begin{question}
1. How to construct a resolution of a sparse singularity?

2. How to compute its further invariants, such as the monodromy operator and the Puiseux characteristics?

3. Does there exist canonical inclusion-minimal subsets $B'_i\subset B_i$, such that the $B_\bullet$-sparse singularity is topologically equivalent to the $B'_\bullet$ one?
\end{question}

\begin{rem} 1. If $B_1;=\{d_1\}$, then the Puiseux characteristics of the $(B_1,B_2)$-sparse singularity are the numbers $d_2+r$ such that $j_{r-1}>j_r$, so in this special case the aforementioned questions are well studied (see \cite{ao} and \cite{o09}). In particular, our Theorem \ref{thsparse} can be regarded as a generalization of Thorem 5.1 in \cite{ao}, though our proof is completely different.

2. Our proof of the theorem for $k=2$ is based on the topological interpretation of the Milnor number (see Section \ref{sssc}), and is thus not applicable to the case $k>2$ (which would require an algebraic approach in the vein of \cite{greuel}).

3. If Conjecture \ref{conj11} is valid, then the function of $d_i$'s therein is not elementary: e.g. in the case of a monomial curve (i.e. $B_i=\{d_i\}$), it is the genus of the semigroup generated by $d_i$'s.

4. If the support sets $B_i$ are finite, then $f$ is a polynomial map, and its image may have other non-trivial singularities (besides the sparse one). We describe them in Example \ref{exa0}.

\end{rem}

We are particularly interested in sparse curve singularities because they appear as singularities of sparse resultants and "algebraic knot diagrams" (algebraic spatial curve projections). 

Theorem \ref{thsparse} allows to completely describe generic singularities of these objects, see Theorems \ref{thres} and \ref{thproj} below. We work out both of them in one paper because their proofs are mutually dependent.

\subsection{Singular loci of resultants}
Given a pair of finite support sets $B=(B_1,B_2)$ in $\Z$, the resultant $R_B$ is the closure of the set of all pairs of polynomials supported at $(B_1,B_2)$ and having a common root in $\C^*$. Singular loci of sparse resultants and discriminants appear in various topics such as enumerative geometry (see e.g. an overview in \cite{E18}) or symbolic algebra (see e.g. \cite{dickenst21}). Degrees and, more generally, tropicalizations of such singular loci are computed (see e.g. \cite{E18} and \cite{dickenst16}) under the assumption that they ``look as expected'' i.e. as in the classical case.

For the classical resultant of polynomials of given degrees $d_i$ (i.e. for $B_i=[0,d_i]$), the singular locus $\sing R_B$ is an irreducible algebraic set of codimension 2, and the transversal singularity type of the resultant at the generic $f\in\sing R_B$ is an ordinary double point. (The latter statement means that a generic plane through the point $f$ intersects $R_B$ by a curve that locally splits into two smooth and transversally intersecting branches.)

The classical proof involves certain (obvious) properties of the Vandermonde determinant. As soon as we generalize these properties to more general Schur polynomials in Section \ref{sschur}, we obtain a similar description of $\sing R_B$ for arbitrary $B_1$ and $B_2$.
\begin{theor}\label{thres}
Exclude the following exceptional cases from our consideration:

-- one of $B_i$'s consists of one element (the resultant is trivial);

-- $B_i=\{p_i,p_i+q\}$ (the resultant is the determinantal hypersurface $\{c_1c_2=c_3c_4\}\subset\C^4$).

Then the set $\sing R_B$ has only codimension 2 irreducible components, splitting into the following groups:

--  for every $m\geqslant 3$, at most two components at whose generic point the transversal singularity type of the resultant is the ordinary $m$-point (i.e. the union of $m$ smooth pairwise transversal curves);

-- several components at whose generic point the transversal singularity type is the ordinary double point;

-- at most two components at whose generic point the transversal singularity type is a sparse curve singularity.
\end{theor}
A full version of this theorem (with an explicit description of the strata and their degrees) is given in Section \ref{s3} (see Theorem \ref{th0res}). In the course of its proof, we characterize the singular points of the resultant that satisfy the Whitney condition (b), see Proposition \ref{s0whitn}.

Speaking more specifically of the results on Schur polynomials that we need here, we consider a generalized Vandermonde matrix $M(x_0,\ldots,x_k)=(x_i^{b_j})$ for a given tuple of integers $(b_1,\ldots,b_m)$, and study its degeneracy locus (i.e. the set of $x\in\C^k$ such that $\rk M(x)<\max$). See e.g. \cite{zannier} and \cite{bshapiro} for some fundamental results and conjectures on the geometry of such sets. In a subsequent Lemma \ref{lemMsplits2}, we completely characterize (over a field of zero characteristic) tuples of roots of unity $x=(x_0,x_1,x_2)$ for which $\rk M(x)<\max$, proving the following conjecture for $k\leqslant 2$.
\begin{conj}
Let $X=\{ x_0=1, x_1 \ldots, x_k \}$ be a set of pairwise different roots of unity, let $B=\{ b_0=0, b_1, \ldots, b_m \}$ be a set of coprime integer numbers,  and assume that the matrix $M=(x_i^{b_j})$ is degenerate. Then $B$ can be split into $B_1\sqcup\cdots\sqcup B_k$, such that all $x_i$'s are the roots of unity of degree $\gcd\{b-b'\,|\,b,b'\in B_i,\,i=1,\ldots,k\}$.
\end{conj}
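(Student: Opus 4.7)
My plan is induction on $k$, using Lemma \ref{lemMsplits2} ($k\le 2$) as base and reducing larger instances via prime factors of the common period $N:=\operatorname{lcm}_i\ord(x_i)$.

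\textbf{Setup.} Rank deficiency of $M$ is equivalent to the existence of a nonzero $c=(c_0,\ldots,c_k)\in\C^{k+1}$ with $\phi(b):=\sum_i c_i x_i^b$ vanishing on all $b\in B$. Writing $x_i=\zeta_N^{a_i}$ for distinct $a_i\in\Z/N\Z$, the function $\phi$ descends to $\Z/N\Z$ and its discrete Fourier transform is supported on $\{a_0,\ldots,a_k\}$. After minimizing the support of $c$ and shrinking $k$ correspondingly, I may assume the relation is \emph{essential}: no proper subset of the $\chi_{a_i}$'s supports an exponential sum vanishing on $B$.

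\textbf{Reduction step.} Pick a prime $p\mid N$, let $H\subset\Z/N\Z$ be the subgroup of order $p$, and let $H^\perp=p\Z/N\Z$ be its annihilator. Split $\phi=\phi^++\phi^-$ by Fourier support: $\phi^+$ collects the $\chi_{a_i}$ with $a_i\in H^\perp$ and is $H$-invariant, while $\phi^-$ collects the rest and averages to zero on each $H$-coset. Partitioning $B$ into fibers $B^{(r)}=B\cap(r+H)$, the identity $\phi^+(r)=-\phi^-(b)$ for $b\in B^{(r)}$ forces $\phi^-$ to be constant on each $B^{(r)}$. When the $S^-$-part has strictly fewer than $k+1$ characters (i.e.\ at least two of the $x_i$'s have non-maximal $p$-valuation in their order), this is a strictly smaller instance of the same vanishing problem, to which induction applies, producing on each $B^{(r)}$ a partition adapted to the $x_i$'s with $a_i\notin H^\perp$; the coset structure $B^{(r)}\subset r+H$ simultaneously handles the $x_i$'s with $a_i\in H^\perp$, which are constant on the block.

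\textbf{Main obstacles.} Two genuine difficulties remain. First, when only the trivial character is killed by averaging for every prime $p\mid N$ (a configuration of ``full-period'' roots of unity), the above reduction fails to decrease $k$, and one must instead exploit the affine structure of the restricted relation on $B^{(r)}$ via combinatorial regularity extracted from $\phi$ being an essential vanishing sum. Second, assembling the partitions obtained for the various $r$'s and across different primes $p\mid N$ into a single coherent partition $B=B_1\sqcup\cdots\sqcup B_k$ matched to individual $x_i$'s, rather than to prime-power components of their orders, is a nontrivial combinatorial task. Together these amount to a structure theorem for essential vanishing exponential sums $\sum c_i\zeta_N^{a_ib_j}=0$ with arbitrary $c_i\in\C^\times$, genuinely extending the Conway--Jones--Lam--Leung classification (which handles only unit-modulus coefficients). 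I would pursue this via the Schur-polynomial reformulation central to the paper: the identity $\det(x_i^{b_{j_s}})=V(x_\bullet)\cdot s_\lambda(x_\bullet)$ converts rank deficiency into simultaneous vanishing of a whole family of Schur polynomials at roots of unity, and the zero-locus analysis of Section \ref{sschur}, which suffices for $k=2$, should bootstrap to arbitrary $k$ via a combination of induction on $N$ and Galois descent replacing the $c_i$ by their averages over $\mathrm{Gal}(\Q(\zeta_N,c_\bullet)/\Q(c_\bullet))$.
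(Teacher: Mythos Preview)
The statement you are attempting to prove is presented in the paper as an open \emph{conjecture}, not a theorem. The paper establishes it only for $k\le 2$: Lemma~\ref{lem3minor} handles the $3\times 3$ square case by an elementary complex-conjugation trick, Lemma~\ref{lem3prop} reduces rectangular matrices to square ones, and Lemma~\ref{lemMsplits2} assembles these into the $k=2$ statement. There is no proof in the paper for general $k$ to compare against.

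Your proposal is not a proof either, and you say so yourself: the two ``main obstacles'' you identify are precisely the heart of the problem. The Fourier-averaging reduction over a prime $p\mid N$ is a natural move, but the case where it fails to shrink the character support (your ``full-period'' configuration) is not a degenerate edge case---it is the generic situation once $N$ is squarefree and the $a_i$ are in general position in $(\Z/N\Z)^\times$. Your suggestion to handle it via ``combinatorial regularity extracted from $\phi$ being an essential vanishing sum'' is not an argument but a hope; the Conway--Jones theory you invoke is already quite delicate for unimodular coefficients and does not obviously extend to arbitrary $c_i\in\C^\times$. Likewise, the gluing step across primes and residue classes is where the actual combinatorics of the conjecture lives, and you have not indicated any mechanism for it beyond ``Galois descent,'' which does not by itself produce a partition of $B$.

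In short: the paper does not claim a proof of this conjecture, and neither do you. What you have is a reasonable opening move (prime-by-prime Fourier decomposition) followed by an honest inventory of the gaps. If you want to make progress, the paper's own remark that Lemma~\ref{lem3prop} reduces everything to square matrices suggests focusing on the vanishing of Schur polynomials $s_\lambda(x_0,\ldots,x_k)$ at tuples of roots of unity for arbitrary $k$, rather than on the linear-relation formulation.
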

Note that Lemma \ref{lem3prop} reduces the general conjecture to the case of a square matrix, i.e. essentially to a statement about the corresponding Schur polynomial (which is proved for $k=2$ in Lemma \ref{lem3minor}). For arbitrary $k$, this conjecture is an important step in towards extending Theorem \ref{thres} to higher codimension strata of the resultant.

\subsection{Singularities of algebraic knot diagrams} As in the univariate case, we identify monomials $x_1^{a_1}\cdots x_k^{a_k}$ with their degrees $(a_1,\ldots,a_k)$ as points of the lattice $\Z^k$, and define a sparse polynomial supported at $A\subset\Z^k$ as a generic linear combination of monomials from $A$.

Consider a spatial curve in the algebraic torus $\CC^3$ defined as the common zero locus of a pair of sparse polynomials supported at given sets $A_1$ and $A_2\subset\Z^3$.
We are interested in the singularities of its image under the coordinate projection $\CC^3\to\CC^2,\,(x_1,x_2,x_3)\mapsto(x_1,x_2)$. Since the image may be not closed, we study its closure $C\subset\CC^2$ (recall that the metric and Zariski closures are the same thing for the image of an algebraic set).

\begin{theor} \label{thproj} All singularities of the sparse curve projection $C$ are ordinary multiple points and sparse curve singularities.
\end{theor}
The full version of this theorem, specifying the exact number of singularities of each type except for ordinary double points, is stated and proved in Section \ref{s3} (see Theorem \ref{th0proj}). For the number of ordinary double points, see the next Subsection \ref{ssutrop}.

Moreover, from the full version of Theorem \ref{thproj} we will see that all ordinary $m$-points for $m\geqslant 3$ come from $m$-tuples of points of the form $(x_1,x_2,\sqrt[m]{1}^kx_3),\,k=1,\ldots,m,$ on the spatial curve, unless 

i) one of the $A_i$'s is contained in a union of two horizontal planes (by horizontal we mean a plane parallel to the first two coordinate axes), or 

ii) there exists a union of three horizontal planes containing both $A_1$ and $A_2$ up to a shift. 

Since at most two of the $m$ numbers $\sqrt[m]{1}^k$ are real, this implies the following.
\begin{sledst}\label{c17}
Consider a spatial curve $V\subset(\R\setminus 0)^3$ defined by generic real polynomials supported at $A_1$ and $A_2\subset\Z^3$ that do not satisfy (i) and (ii) above. Then the closure $C\subset(\R\setminus 0)^2$ of the coordinate projection of $V$ is topologically a link diagram, in the sense that

1) $C$ is homeomorphic to $\R$ near every its point except for finitely many;

2) every exceptional point of $C$ is a real ordinary double point.
\end{sledst}
Such objects may be interesting from the perspective of real algebraic knots (see e.g. \cite{bjo}, \cite{mikhorevk}, \cite{mikhorevk1}), because a natural way to construct examples of real algebraic links with prescribed topology is the patchworking technique for spatial sparse curves as in \cite{sturmfci} and \cite{bih}. Such constructions produce real spatial complete intersection curves defined by sparse polynomial equations, and are thus covered by Corollary \ref{c17}.

\subsection{Ultratropicalizations.}\label{ssutrop} In the setting of Theorem \ref{thproj},
the number of ordinary double points of the projection can be found using the subsequent Theorem \ref{thsum}, which computes the sum of the $\delta$-invariants for all singularities of the sparse curve projection $C$. 
\begin{observ}\label{obnumdouble} We have:

\quad $($the sum of the $\delta$-invariants of all singularities, known from Theorem \ref{thsum}$)=$

$=($the sum of the $\delta$-invariants of the 
ordinary double points, 

\hfill equal to the sought number of such points$)+$

\;$+($the sum of the $\delta$-invariants of the other singularities, 

\hfill known from Theorem \ref{th0proj},  the full version of Theorem \ref{thproj}$)$.

\end{observ}

While we do not formulate Theorem \ref{thsum} in the introduction due to involved notation, it is a key result in our story. 

Instead, we outline here a more general formula for the sum of the $\delta$-invariants of an arbitrary reduced curve $C\subset\CC^2$, which will eventually lead to Theorem \ref{thsum}.

An important invariant of an algebraic curve $C\subset\CC^k$ is its tropical fan $\Trop C$, a finite collection of rays $R\in\mathcal{R}$ in $\Z^k$ emanating from 0 with integer multiplicities $m_R$.

In Section \ref{s2}, we assign to $C$ a refined invariant called the ultratropical fan ${\rm UTrop}\, C$, which consists of the same rays $R_i$, equipped with symmetric integer matrices $(g_{i,j}^{R})$ of size $m_R\times m_R$. The latter is called {\it the tangency matrix} in the direction $R$ and consists of the orders of contact of the Puiseux roots of the curve $C$ with respect to the $R_i$-orbit of the tropical compactification of $C$ (see Definition \ref{deftangency} for details).
\begin{rem}
The name ``ultratropical'' reflects the fact that the inverse numbers to the entries $g_{i,j}^{R}$ define an ultrametric on the set of Puiseux roots of the curve $C$ in the direction $R$, see Section \ref{s2} for details.
\end{rem}

Once we know the ultratropicalization ${\rm UTrop}\, C$, we can compute the sum of the $\delta$-invariants of a plane curve $C$.
\begin{lemma}\label{labstr}
1. The sum of the Milnor numbers of the singularities of a reduced curve $C\subset\CC^2$ equals $$e(C)+(\Trop C)^2-\sum_{R, i, j}g^R_{i,j},$$
where the three terms are the Euler characteristic, the self-intersection index of the tropical fan (equal to the lattice area of the Newton polygon of $C$), and the sum of all entries of all the tangency matrices in ${\rm UTrop}\, C$.

2. Equivalently, the sum of the $\delta$-invariants of a reduced curve $C\subset\CC^2$ equals one half of
$$e(n(C))+(\Trop C)^2-\sum_{R, i, j}g^R_{i,j},$$
where $n(C)$ is the normalization of $C$.
\end{lemma}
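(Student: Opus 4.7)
The plan is to prove part (1) by a compactification-and-smoothing argument and to deduce (2) from (1) using standard identities for plane curve singularities. For the reduction: the relations $\mu_p = 2\delta_p - r_p + 1$ and $e(C) = e(n(C)) - \sum_p(r_p - 1)$, where $r_p$ is the number of branches at $p$, combine into $\sum_p \mu_p = 2\sum_p \delta_p + e(C) - e(n(C))$, which rewrites (1) as (2).

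For (1), I would fix a smooth toric compactification $\bar{C} \subset X_\Sigma$ with $\Sigma$ refining $\Trop C$, so that $\bar{C}$ meets the boundary only in the open orbits of the rays $R \in \Sigma$. Perturb the defining polynomial of $C$ within its Newton polygon $\Delta$ to obtain a $1$-parameter family with $\bar{C}_0 = \bar{C}$ and $\bar{C}_t$ smooth and transverse to $\partial X_\Sigma$ for generic $t$. The Milnor fiber Euler characteristic formula applied to the isolated singularities of $\bar{C}$ then yields
\[
\sum_{p \in \bar{C}} \mu(\bar{C}, p) \;=\; \chi(\bar{C}) - \chi(\bar{C}_t).
\]

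Next, I would evaluate both sides. By Khovanskii's formula $\chi(\bar{C}_t) = 2 - 2 I(\Delta)$, and Pick's theorem rewrites this as $\sum_R m_R - (\Trop C)^2$. Meanwhile $\chi(\bar{C}) = e(C) + |\bar{C}\cap\partial X_\Sigma|$ and $\sum_R m_R = \sum_{q \in \bar{C}\cap\partial X_\Sigma} e_q$ by the projection formula, where $e_q = (\bar{C},\partial X_\Sigma)_q$ is the local intersection multiplicity at $q$. Splitting the singularity sum into contributions from $C$ and from $\partial X_\Sigma$ and combining these identities, we obtain
\[
\sum_{p \in C} \mu_p \;=\; e(C) + (\Trop C)^2 - \sum_{q \in \bar{C}\cap\partial X_\Sigma} \bigl(e_q - 1 + \mu(\bar{C}, q)\bigr).
\]

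The main obstacle is the local identification, ray by ray,
\[
\sum_{i,j=1}^{m_R} g^R_{i,j} \;=\; \sum_{q \in \bar{C}\cap D_R}\bigl(e_q - 1 + \mu(\bar{C}, q)\bigr).
\]
The $m_R$ Puiseux roots of $C$ in direction $R$ partition into Galois-conjugate groups, one per analytic branch of $\bar{C}$ at the points $q \in \bar{C}\cap D_R$. Unpacking $\mu(\bar{C},q) = 2\delta_q - r_q + 1$ together with $\delta_q = \sum_k \delta(B_k) + \sum_{k<k'}(B_k \cdot B_{k'})_q$ expresses the right-hand side as a sum of intrinsic branch invariants and pairwise branch intersection numbers at $q$. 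By the definition of the tangency matrix as pairwise orders of contact of Puiseux sheets with respect to the $R$-orbit, the left-hand side is the same expression once Puiseux contact orders are translated into branch intersection numbers. Carrying out this translation, branch by branch and ray by ray, is where the substantive bookkeeping sits.
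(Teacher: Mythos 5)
Your proposal follows essentially the same route as the paper: compactify in the toric variety of the Newton polygon of $C$, perturb the defining polynomial to a generic smooth curve $\bar{C}_t$ with the same Newton polygon, compare Euler characteristics via the Milnor-fiber formula, and reduce everything to a local statement ray by ray; Part 2 is then obtained from Part 1 via $\mu_p = 2\delta_p - r_p + 1$ and $e(C) = e(n(C)) - \sum_p(r_p-1)$, exactly as in the paper. Your local identity
$$\sum_{i,j=1}^{m_R} g^R_{i,j} = \sum_{q\in\bar C\cap D_R}\bigl(e_q - 1 + \mu(\bar C,q)\bigr)$$
is precisely the content that the paper extracts from its Proposition~\ref{deltamulti} (up to the standard translation between $\mu$, $\delta$, and the number of boundary preimages), and the branch-by-branch Noether-type bookkeeping you outline — splitting the tangency-matrix sum into same-branch contributions $\mu(B_k)+e(B_k)-1$ and cross-branch contributions $2(B_k\cdot B_{k'})_q$ — is the right way to verify it. The proposal is correct and of the same shape as the paper's proof; the only thing left implicit in both is the final combinatorial verification of this local identity.
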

This essentially follows from the Euler characteristics count, see Section \ref{s2}. 
\begin{rem}
Generalizing this lemma (and the notion of ultratropicalization) to higher dimensional tori and their higher dimensional subvarieties would allow to extend many results in this paper to higher dimension.
\end{rem}
It is a classical fact tracing back to \cite{maurer80} that the tropicalization of the image $C$ of a curve $V$ under the coordinate projection $\CC^n\to\CC^k$ is expressed in terms of $\Trop V$. For ultratropicalizations this is not true: the tangency matrices of ${\rm UTrop}\, V$ are just a lower bound for those of ${\rm UTrop}\, C$. However, 
Proposition \ref{projtangency} of Section \ref{ssutropproj} completely describes ${\rm UTrop}\, C$, if $V$ is defined by sparse polynomial equations. For $n=3$ and $k=2$, together with the preceding lemma and the isomorphism $V=n(C)$, this yields Theorem \ref{thsum}.

\subsection{Structure of the paper} In Section \ref{s3}, we fully formulate Theorems \ref{thres} and \ref{thproj}, the full versions of the theorems in the introduction. 

The other four sections introduce the techniques intended for the proof of the main results. They have a singular, tropical, arithmetic, and singular flavor respectively, and can be read independently of each other.

In Section \ref{sssc}, we prove Theorem \ref{thsparse} and explicitly formulate its genericity assumptions.

In Section \ref{s2} we introduce ultratropicalizations and prove 

the facts from the preceding Section \ref{ssutrop}.

In Section \ref{sschur}, we study certain degenerations of Vandermonde matrices in order to enumerate ordinary  multiple singular points of resultants.

Finally, in Section \ref{ssproofs}, we use the techniques from singularity theory (Whitney stratifications and the Thom-Mather theorem) to finish the proof of the main results. 

A key step in this proof is the observation that a map $\CC^m\to\C^n$, whose components are generic sparse polynomials, is transversal to any given stratified algebraic set in $\C^n$. This is obvious in $\CC^n$ and demands slightly greater care at the coordinate hyperplanes, see Theorem \ref{thtransv} and Corollary \ref{sltransv1} for precise statements.

\subsection*{Acknowledgements}
The third author was funded by Horizon Europe ERC (Grant number: 101045750, Project acronym: HodgeGeoComb).

\section{Singularities of sparse resultants and algebraic knot diagrams}\label{s3}

We identify monomials $x_1^{a_1}\cdots x_k^{a_k}$ with their degrees $(a_1,\ldots,a_k)$ as points of the lattice $\Z^k$, define a polynomial supported at a finite set $A\subset\Z^k$ as a linear combination of the monomials from $A$, and denote the vector space of such polynomials by $\C^A$. Similarly, for not necessarily finite $A\subset\Z^k_{\geqslant 0}$, we denote by $\C^A$ the space of power series supported at $A$ and convergent near the origin.

Recall that a sparse curve singularity supported at $B$ is defined to be the germ $$(f_1,\ldots,f_n)\,:\,(\C,0)\,\to\,(\C^n,f(0))\eqno{(*)}$$
whose components are generic sparse analytic functions $$f:=(f_1,\ldots,f_n)\in\C^{B_1}\oplus\cdots\oplus\C^{B_n}=:\C^B$$
supported at the given sets $B_1,\ldots,B_n\subset\Z,\,\min B_i\geqslant 0$.
\begin{rem}
1. Adding or removing $\{0\}$ to $B_i$ results in adding or removing a constant term from the polynomial $f_i$, which does not affect the singularity at $f(0)\in\C^n$.

2. If $d_B:={\rm GCD}(B_1\cup\ldots\cup B_n)>1$, then $f(t)=\tilde f(t^d)$ for $\tilde f_i\in\C^{B_i/d}$, i.e. every sparse curve singularity can be represented as a $B$-sparse curve singularity with $d_B=1$. We always assume this in what follows.
\end{rem}

\subsection{The components of the singular locus of the resultant}
For finite sets $B_i\subset\Z$ denote $$L(B_i):=\max B_i-\min B_i,$$ and define $$\langle B_1,\ldots,B_n\rangle$$ as the maximal number $m\in\Z$ such that each $B_i$ can be represented as $m\cdot B_i'+b_i$ for suitable $b_i$ and $B_i'\subset\Z$ (operations with finite subsets of $\Z$ are understood point-wise).
\begin{rem}\label{resgcd1}
Note that it is enough to study resultants $R_B\subset\C^{B_1}\oplus\C^{B_2}$ for $\langle B_1,B_2\rangle=1$, because the general case $\langle B_1,B_2\rangle=m>1$ can be reduced to this one by a change of variable $\tilde x=x^m$. More specifically, the natural isomorphism $\C^{B_1}\oplus\C^{B_2}\to\C^{B'_1}\oplus\C^{B'_2}$ sending to every $(f_1,f_2)$ the pair of polynomials $\bigl(x^{b_1}f_1(x^m),x^{b_2}f_2(x^m)\bigr)$, maps the initial resultant $R_{B_1,B_2}$ to $R_{B'_1,B'_2}$ with $\langle B'_1,B'_2\rangle=1$. 
\end{rem}
\begin{theor}\label{th0res} Exclude with no loss in generality the following cases:

-- one of $B_i$'s consists of one point (then the resultant is trivial), 

-- $\langle B_1,B_2\rangle>1$ (see Remark \ref{resgcd1});

-- $L(B_1)=L(B_2)=1$ (then the resultant is given by the equation $c_1c_2=c_3c_4$ in $\C^4$, and thus has one singular point).

\vspace{1ex}

Then all irreducible components of $\sing R_B$ have codimension 2 and the following transversal singularity types:

1) For every $m>1$, there exists at most one way to decompose one of $B_i's$ (say, $B_1$) into $B_{1,1}\sqcup B_{1,2}$ so that $\langle B_{1,1},B_{1,2},B_2\rangle=m$ and $|B_1|>2$. This decomposition produces the singular locus component
$$S_m:=\{(f_{1,1}+ f_{1,2},f_2)\,|\,\mbox{\rm the three polynomials } f_\bullet\in\C^{B_\bullet}\mbox{ \rm have a common root}\}.\eqno{(1)}$$
The transversal singular type of the resultant at the generic point of this component is an ordinary $m$-point.

2) Unless $(1+\min B_i)\in B_i$ for $i=1$ or 2, the set $$S_0=\{(f_1,f_2)\,|\, \mbox{ \rm the monomial } x^{\min B_i}\mbox{ \rm has coefficient }0\mbox{ \rm in }f_i \}\eqno{(2)}$$
is a singular locus component. Its transversal singularity type is that of the sparse curve singularity supported at $B_i-\min B_i,\, i=1,2$.

3) Similarly, unless $(\max B_i-1)\in B_i$ for $i=1$ or 2, the set $$S_\infty=\{(f_1,f_2)\,|\, \mbox{ \rm the monomial } x^{\max B_i}\mbox{ \rm has coefficient }0\mbox{ \rm in }f_i \}\eqno{(3)}$$
is a singularity locus component. Its transversal singularity type is that of the sparse curve singular supported at $(\max B_i)-B_i,\, i=1,2$.

4) If $|B_i|=2$ and $L(B_{3-i})>1$, then the set $$T_i=\{(f_1,f_2)\,|\, \mbox{ \rm both monomials of } f_i \mbox{ \rm have zero coefficients}\}\eqno{(4)}$$
is a singular locus component. Its transversal singularity type is an ordinary $L(B_{3-i})$-point.

5) If $B_1$ and $B_2$ differ by a shift $b\in\Z$ and consist of three points, then the set $$T_0=\{(f,g)\,|\,f/g=\lambda x^b\}$$ is a singular locus component. Its transversal singularity type is an ordinary $L(B_1)$-singularity.

6) If $B_1$ and $B_2$ are as in (5), then there are no more components in the singular locus. Otherwise the rest of the singular locus $S_1$ has the transversal singularity type of the ordinary double point and consists of the following components:

-- Denote $\langle B_i\rangle$ by $k_i$, and $\sqrt[k_i]{1}$ by $\epsilon_{k_i}$. For every $i=1,2$ such that $|B_{3-i}|>2$, and for every positive $j\leqslant k_i/2$ that is not divisible by an integer of the form $k_i/m$ for $m$ from (1), the closure $S_i^j$ of the set $$
\left\{ (f,g) \,|\, \exists x \in \C^\x: f(x)=g(x)=f\left(x\cdot\epsilon^j_{k_i}\right)=g\left(x\cdot \epsilon^j_{k_i}\right)=0\right\}$$
is a component of $S_1$. 

-- The closure $S$ of 

\vspace{1ex}

$\{(f_1,f_2)$ having two common roots $x_1,x_2\in\C^*$ such that $x_2/x_1$ is not a root of unity$\}$ 

\vspace{1ex}

\noindent is a component of $S_1$. 

\vspace{1ex}

The degrees of these strata and the $\delta$-invariants of their transversal singularity types are shown in the following table. 
\end{theor}
\noindent {\bf Table 1:} a stratum, its degree, the transversal singularity type, how it looks over $\R$, and its $\delta$-invariant
(recall that the quantity $\delta_{B_1,B_2}$ is defined in Theorem \ref{thsparse}).

\noindent \begin{tabular}{ |l|p{3.5cm}|p{3.2cm}|l|l| } \hline
$S_0$ & 1 & sparse singularity & $\prec$ & $\delta_0:=\delta_{B_1-\min B_1,B_2-\min B_2}$ \\ \hline
$S_\infty$ & 1 & sparse singularity & $\prec$ & $\delta_\infty:=\delta_{(\max B_1)-B_1,(\max B_2)-B_2}$   \\ \hline
$S_1$ & $\bigl((L(B_1+B_2)-1)^2$\newline ${}\quad -\delta_0-\delta_\infty+1\bigr)/2$ & ordinary \newline double point & $\times$ & 1  \\ \hline
$S_m,\, m>1$ & $L(B_{1,1}+B_{1,2}+B_2)$ & ordinary $m$-point & $\convolution$ & $m(m-1)/2$ \\ \hline
$T_i, i=1,2$ & 1 & ordinary $m$-point \newline for $m:=L(B_{3-i})$ & $\convolution$ & $m(m-1)/2$ \\ \hline
$T_0$ & 3 & ordinary $m$-point \newline for $m:=L(B_1)$ & $\convolution$ & $m(m-1)/2$ \\ \hline
\end{tabular} 

\vspace{1ex}

\subsection{Singularities of sparse curve projections}
For finite sets $A_1$ and $A_2\subset\Z^3$, a sparse spatial curve is the common zero locus of a pair of generic polynomials $(g_1,g_2)\in\C^{A_1}\oplus\C^{A_2}$.
We are interested in the singularities of its image under the coordinate projection $\CC^3\to\CC^2,\,(x,y_1,y_2)\mapsto(y_1,y_2)$. Since the image may be not closed, we study its closure $C\subset\CC^2$ (recall that the metric and Zariski closures are the same thing for the image of an algebraic set).

The answer will heavily depend on the images $B_i$ of the sets $A_i$ under the projection $\Z^3\to\Z^1,\, (a_1,a_2,a_3)\mapsto a_1$.
\begin{rem}\label{remsparseres}
Note that the resulting sets $B_i$ may be sparse (i.e. not entirely consisting of consecutive integers) even if $A_i$ are ``nice'' lattice polytopes. This is one of the reasons to study sparse resultants here.
\end{rem}
For every subset $B\subset B_i$ and $g_i(x)=\sum_{a\in A_i} c_ax^a$, we define $A_i^{B}\subset A_i$ as the preimage of $B$, and $g_i^{B}$ as $\sum_{a\in A_i^{B}} c_ax^a$. The lattice mixed volume of the convex hulls of $n$ sets $S_i\subset\Z^n$ (or, more generally, $k$ sets that can be shifted to the same $k$-dimensional coordinate plane in $\Z^n$) is denoted by $\MV(S_1,\ldots,S_k)\in\Z$.

\begin{theor}\label{th0proj}
Exclude from our consideration the same exceptional $B=(B_1,B_2)$ as in Theorem \ref{th0res}.
Then the curve $C\subset\CC^2$ has the following singularities. 

1) For every $m>1$, there exists at most one way to decompose one of $B_i's$ (say, $B_1$) into $B_{1,1}\sqcup B_{1,2}$ so that $\langle B_{1,1},B_{1,2},B_2\rangle=m$ and $|B_1|=1$. Then the solutions of the system $$g_1=g_2^{B_{2,1}}=g_2^{B_{2,2}}=0\eqno{(1)}$$ project to $\MV(A_1,A_{2}^{B_{2,1}},A_{2}^{B_{2,2}})/m$ ordinary $m$-points of $C$.

2) Additionally, if one of $B_i$'s (say, $B_1$) consists of two elements, then the solutions of the system $$g_1^{\min B_1}=g_1^{\max B_1}=0\eqno{(2)}$$ are $\MV(A_1^{\min B_1},A_1^{\max B_1})$ ordinary $L(B_2)$-points of $C$.

3) Unless $(1+\min B_i)\in B_i$ for $i=1$ or 2, the solutions to the system $$g_1^{\min B_1}=g_2^{\min B_2}=0\eqno{(3)}$$ are $\MV(A_1^{\min B_1},A_2^{\min B_2})$ sparse curve singularities of $C$, each supported at the sets $B_1-\min B_1$ and $B_2-\min B_2$.

4) The same for $\max$ instead of $\min$.

5) If $B_1=\{b_1,b_2,b_3\}$ and $B_2=\{b_1+b,b_2+b,b_3+b\}$, then the solutions of the system of three equations
$$g_1^{b_j}(1,x_2,x_3)=\lambda g_2^{b_j+a}(1,x_2,x_3),\,j=1,2,3,\mbox{ \rm in the three variables } \lambda,x_2,x_3\eqno{(4)}$$ project to the ordinary $L(B_1)$-points of $C$, whose number equals the mixed volume of $\bigl(A_1^{b_j}\times\{0\}\bigr)\cup \bigl(A_2^{b_j+b}\times\{1\}\bigr),\,j=1,2,3$.

6) Besides the listed singularities, the curve $C$ has only ordinary double points.
\end{theor}
\begin{rem}\label{remsum}
Recall that the number of ordinary double points in Part 6 can be found from the subsequent Theorem \ref{thsum}, see Observation \ref{obnumdouble}. 
\end{rem}
\begin{exa}\label{exa0}
Assume that $0\in B_i\subset\Z_{\geqslant 0},~i=1,2,$ and define the sets ${A_1=\{(b,0,0)\mid b\in B_1\}\cup\{(0,1,0)\}}$ and $A_2=\{(b,0,0)\mid b\in B_2\}\cup\{(0,0,1)\}$, as shown in Figure \ref{fig:MFP1}. Then, in the setting of Theorem \ref{th0proj}, the curve $C$ (whose Newton polygon is computed in Example \ref{exa:MFP}) may have ordinary $m$-points for various $m$ (whose number can be found using Theorem \ref{th0proj}), and one sparse singularity supported at $(B_1,B_2)$. 

Note that the latter fact can be verified directly from the definition of a sparse singularity (rather than via Theorem \ref{th0proj}): the equations defining the spatial curve scale to the form $f_1(x)-y_1=f_2(x)-y_2=0$ for $f_i\in\C^{B_i}$, i.e. $C$ is the sparse plane curve from the definition of the sparse singularity type. 
\end{exa}

\section{Proof of Theorem \ref{thsparse} on sparse curve singularities: 0-nondegeneracy}\label{sssc}

\subsection{Proof of Theorem \ref{thsparse}.1\&2.} 
\begin{lemma}\label{ldeltainters}
The $\delta$-invariant of an injective germ $(f_1,f_2):(\C,0)\to(\C^2,0)$ equals the intersection number $I$ of the curves $F_i(t_1,t_2)=0,\,i=1,2$, at $0\in\C^2$, where $$F_i(t_1,t_2):=\frac{f(t_2)-f(t_1)}{t_2-t_1}.\eqno{(*)}$$
\end{lemma}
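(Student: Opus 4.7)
The plan is to realize $V(F_1, F_2) \subset (\C^2, 0)$ as (the closure of) the double-point scheme of the parametrization $f$, and then read off $\delta$ by a conservation-of-number argument under a generic deformation.

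First I would check that $V(F_1, F_2)$ is concentrated at the origin, so that the intersection number is finite. Off the diagonal $\{t_1 = t_2\}$, simultaneous vanishing $F_1 = F_2 = 0$ is equivalent to $f(t_1) = f(t_2)$, which by injectivity of $f$ cannot occur in a punctured neighborhood of $0$. On the diagonal itself, $F_i(t,t) = f_i'(t)$, and since $f$ is eventually an immersion, at least one derivative is nonzero for small $t \neq 0$. Hence $I = \dim_\C \C\{t_1,t_2\}/(F_1,F_2)$ is finite. Next, I would perturb $f$ to a generic $f_\varepsilon$ whose image is an immersed plane curve with only ordinary nodes; by the standard topological characterization of $\delta$ for a unibranch germ (via the genus drop under a smoothing), this image carries exactly $\delta$ nodes near the origin, each corresponding to an unordered pair of preimages $\{a,b\}$. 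Pulled back to the parametric plane, each such pair yields an isolated common zero of $F_{1,\varepsilon}$ and $F_{2,\varepsilon}$ contributing multiplicity one to the local intersection index (read with the natural $S_2$-symmetric convention, since $F_1, F_2$ are symmetric in $t_1, t_2$; equivalently, intersection on the symmetric square $\mathrm{Sym}^2\C \cong \C^2$). Conservation of intersection multiplicity under the flat deformation $\varepsilon \to 0$ then gives $I = \delta$.

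The main technical point I anticipate is the verification that a generic $f_\varepsilon$ simultaneously smooths $C$ to an immersed curve with only ordinary nodes and preserves the number of virtual double points responsible for $\delta$. This rests on the existence of an $\mathcal{A}$-versal deformation of the map-germ $f$ and the semi-continuity of $\delta$, both standard in the singularity theory of plane curve parametrizations. A parallel algebraic route (useful as a cross-check) would exploit the identification
$$R \otimes_{\mathcal{O}_C} R \;\cong\; \C\{t_1,t_2\}\big/\bigl((t_2-t_1)F_1,\,(t_2-t_1)F_2\bigr),$$
with $R = \C\{t\}$ the normalization and $\mathcal{O}_C = \C\{f_1,f_2\}$ the local ring of the image, extracting $\delta = \dim_\C R/\mathcal{O}_C$ from the length of $V(F_1,F_2)$ after saturating out the diagonal $\{t_1 = t_2\}$; the injectivity of $f$ is precisely what makes this saturation give the correct length.
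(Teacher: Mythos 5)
Your proposal follows essentially the same route as the paper's proof: perturb $f$ to a generic immersion whose image has only ordinary nodes, observe that their number is $\delta$, identify the nodes with solutions of the perturbed system $\tilde F_1=\tilde F_2=0$, and invoke conservation of intersection number under the deformation. Where you are more careful than the paper is on the $S_2$-double-counting: each unordered pair $\{a,b\}$ of preimages of a node yields two solutions, $(a,b)$ and $(b,a)$, in the ordered $(t_1,t_2)$-plane, so the literal intersection number $I=\dim_\C\C\{t_1,t_2\}/(F_1,F_2)$ at $0\in\C^2$ is $2\delta$ rather than $\delta$; for the cusp $f=(t^2,t^3)$ one has $F_1=t_1+t_2$, $F_2=t_1^2+t_1t_2+t_2^2$, whence $I=2$ while $\delta=1$. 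The paper's proof asserts that the number of nodes ``tautologically equals'' the number of solutions, and this is exactly where the factor of two slips in. Your convention of reading $I$ as an intersection multiplicity on the symmetric square $\mathrm{Sym}^2\C\cong\C^2$ (equivalently, inserting a factor $1/2$) is what makes the Lemma true, and it is also the normalization under which the blow-up computation in Section \ref{sssc} agrees with the formula for $\delta_{B_1,B_2}$ in Theorem \ref{thsparse}; you should state explicitly that this is a correction to the printed statement rather than a paraphrase of it. Your preliminary finiteness check for $I$ is a correct and welcome addition absent from the paper; the sketched algebraic alternative via $R\otimes_{\mathcal{O}_C}R$ is reasonable in spirit but, as you leave it, faces the same factor-of-two bookkeeping and would need to be made precise before it can serve as a cross-check.
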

\begin{proof}
Let $\tilde f_i$ be a generic small perturbation of a representative of the germ $f_i$, defined on some neighborhood of $0\in\C$, and $\tilde F_i$ the respective divided difference. Then the image of $(\tilde f_1,\tilde f_2)$ has only ordinary double self-intersection points $$\tilde f_\bullet(t_1)=\tilde f_\bullet(t_2),\,t_1\ne t_2.$$ On one hand, their number equals the $\delta$-invariant of the initial singularity (by one of its definitions), and on the other hand it tautologically equals the number of solutions of $$\tilde F_1(t_1,t_2)=F_2(t_1,t_2)=0,$$ i.e. the total intersection number of the curves $\tilde F_1(t_1,t_2)=0$ and $\tilde F_2(t_1,t_2)=0$ in $U\times U$. The latter equals the sought intersection number $I$ by the invariance of the total intersection number under deformations.
\end{proof}
This lemma reduces Theorem \ref{thsparse}.1 and 2 to the following.
\begin{utver}
Let $f_i$ be as in Theorem \ref{thsparse}, then the intersection number $I$ from Lemma \ref{ldeltainters} equals or exceeds the expected value $\delta_{B_1,B_2}$ once $(f_1,f_2)$ are 0-nondegenerate in the following sense.
\end{utver}

Let $B_1, B_2$ be sets in $\Z^1_{\geqslant 0}$ such that $\langle B_1,B_2\rangle=\Z,$ and $0\in B_1\cap B_2.$ Denote $d_i=\min(B_i\setminus\{0\}).$ 

Consider a series $f(x)=\sum_{b\in B} c_b x^b\in\C^{B}$ as a series $f(x)=\sum_{b\in \Z} c_bx^b\in\C^{\Z}$ with $c_b=0$ for $b\notin B.$ 

\begin{defin}\label{def0nondeg}
	In the same notation as above, a pair $(f_1,f_2)$ of series $f_1(x)=\sum_{b\in B_1} c_{1,b} x^b\in\C^{B_1}$ and $f_2(x)=\sum_{b\in B_2} c_{2,b} x^b\in\C^{B_2}$ is {\it $0$-nondegenerate}, if, 
 defining $r_k:=\min (B_1-d_1)\cup(B_2-d_2)\setminus(k\cdot\Z)$, we have $$\dfrac{c_{1,d_1+r_{k}}}{c_{1,d_1}\cdot d_1}\neq\dfrac{c_{2,d_2+r_{k}}}{c_{2,d_2}\cdot d_2}$$ for every common divisor $k>1$ of $d_1$ and $d_2$.
\end{defin}

The proof proceeds by blowing up the origin, given in some affine map as $$\pi:\C^2\to\C^2,\,(t_1,t_2)=\pi(u,v):=(uv,u).$$
The sought intersection number $I$ equals the intersection number of the pull-back divisors $F_i\circ\pi=0$. The latter divisor consists of the exceptional part $u=0$ with multiplicity $d_i-1$ and the germs of smooth reduced curves $C_{i,j},\,j=1,\ldots,d_i$, transversally intersecting $u=0$ at $v=v_{i,j}:=\sqrt[d_i]{1}^j$. Thus the sought intersection number equals $$(d_1-1)(d_2-1)+\sum_{v_{1,j}=v_{2,k}}C_{1,j}\circ C_{2,j}.$$
Thus it remains to find the order of tangency $C_{1,j}\circ C_{2,j}$ of any pair of the branches passing through the same point $v_{1,j}=v_{2,k}$ of the exceptional divisor. This is done in the following lemma.

We denote $g_m(v)=\frac{v^{m}-1}{v-1}$ and for the branch $C_{i,j}$ we set $$m_{i,j}=\min(\{m\in\Z_{\geqslant 0}\mid d_i+m\in B_i,~g(v_{i,j})\neq 0\}).$$
\begin{lemma}
If $(f_1,f_2)$ is 0-nondegenerate, then the order of tangency of any branches $C_{1,j}$ and $C_{2,k}$ at the point $v_0:=v_{1,j}=v_{2,k}$ is equal to $\min(m_{1,j},m_{2,k}).$

Otherwise, the order of tangency of some pairs of branches is strictly greater.
\end{lemma}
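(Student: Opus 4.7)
The plan is to set up the blow-up computation explicitly, obtain the leading-order expansions of the two branches $C_{1,j}$ and $C_{2,k}$ at the point $(0,v_0)$ of the exceptional divisor (where $v_0=v_{1,j}=v_{2,k}$), and then match the coincidence of the leading coefficients with the $0$-nondegeneracy condition.

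First, I would substitute $(t_1,t_2)=(uv,u)$ into $(*)$ to obtain
\[
F_i(uv,u)\;=\;\frac{f_i(u)-f_i(uv)}{u(1-v)}\;=\;\sum_{b\in B_i,\,b\geqslant d_i} c_{i,b}\,u^{b-1}\,g_b(v),
\]
so $F_i\circ\pi=u^{d_i-1}H_i(u,v)$ with $H_i(u,v):=\sum_{b\in B_i,\,b\geqslant d_i} c_{i,b}\,u^{b-d_i}\,g_b(v)$. Since $g_{d_i}$ has simple zeros exactly at the $d_i$-th roots of unity distinct from $1$, the branches $C_{i,j}$ are precisely the implicit-function graphs $v=v_{i}(u)$ of $H_i=0$ through $(0,v_{i,j})$, and they are smooth and transverse to $u=0$.

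Next, let $\ell$ be the multiplicative order of $v_0$ (so $\ell\mid\gcd(d_1,d_2)$ and $\ell>1$). Using $v_0^{d_i}=1$, one computes $g_{d_i+m}(v_0)=(v_0^m-1)/(v_0-1)$, which vanishes iff $\ell\mid m$; hence $m_{i,j}$ coincides with the smallest $m>0$ such that $d_i+m\in B_i$ and $\ell\nmid m$, and so depends only on $\ell$, not on the choice of branch. The Taylor expansion
\[
H_i(u,v)\;=\;c_{i,d_i}\,g'_{d_i}(v_0)\,(v-v_0)\;+\;c_{i,d_i+m_{i,j}}\,g_{d_i+m_{i,j}}(v_0)\,u^{m_{i,j}}\;+\;(\text{higher order}),
\]
together with $g'_{d_i}(v_0)=d_i v_0^{-1}/(v_0-1)$, yields by the implicit function theorem
\[
v_i(u)-v_0\;=\;\alpha_i\,u^{m_{i,j}}+O(u^{m_{i,j}+1}),\qquad \alpha_i\;=\;-\,\frac{c_{i,d_i+m_{i,j}}\,(v_0^{m_{i,j}}-1)\,v_0}{c_{i,d_i}\,d_i}.
\]

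The tangency order of $C_{1,j}$ and $C_{2,k}$ equals the order of vanishing of $v_1(u)-v_2(u)$. If $m_{1,j}\ne m_{2,k}$, this is immediately $\min(m_{1,j},m_{2,k})$. If $m_{1,j}=m_{2,k}=r_\ell$ (by the definition of $r_\ell$ as the smaller of the two), then the tangency equals $r_\ell$ precisely when $\alpha_1\ne\alpha_2$; the common factor $(v_0^{r_\ell}-1)v_0$ cancels in this equation, and the remaining equivalence reads
\[
\alpha_1=\alpha_2\;\Longleftrightarrow\;\frac{c_{1,d_1+r_\ell}}{c_{1,d_1}\,d_1}\;=\;\frac{c_{2,d_2+r_\ell}}{c_{2,d_2}\,d_2},
\]
which is exactly the failure of the $0$-nondegeneracy condition at the common divisor $k=\ell$. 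Thus $0$-nondegeneracy forces $\alpha_1\ne\alpha_2$ for every pair of branches over every point of the exceptional divisor, giving tangency exactly $\min(m_{1,j},m_{2,k})$; and if $0$-nondegeneracy fails at some $k$, then taking $v_0$ a primitive $k$-th root of unity on both $v=v_{1,\bullet}$ and $v=v_{2,\bullet}$ (such a $v_0$ exists because $k\mid d_1,d_2$) produces a pair of branches with strictly greater tangency.

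The main obstacle I anticipate is the identification of the combinatorial quantity $r_\ell$ with the analytic leading exponent $m_{i,j}$, and the reduction of the ratio $c_{i,d_i+r_\ell}/(c_{i,d_i}d_i)$ to the branch slope $\alpha_i$. Once the cancellation $g_{d_1+r_\ell}(v_0)=g_{d_2+r_\ell}(v_0)$ and the explicit form of $g'_{d_i}(v_0)$ are in hand, the equivalence is transparent; everything else is routine power-series manipulation on the blow-up.
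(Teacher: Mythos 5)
Your proof is correct and follows essentially the same route as the paper's: substitute the blow-up $(t_1,t_2)=(uv,u)$, write $F_i\circ\pi=u^{d_i-1}H_i(u,v)$ with $H_i=\sum_{b\geqslant d_i}c_{i,b}u^{b-d_i}g_b(v)$, read off the leading-order slope $\alpha_i$ of each branch through $v_0$ via the implicit function theorem, and observe that equality of slopes for a coincident pair $m_{1,j}=m_{2,k}=r_\ell$ is, after the common nonzero factor $(v_0^{r_\ell}-1)v_0$ cancels, precisely the failure of the $0$-nondegeneracy inequality at the divisor $\ell=\operatorname{ord}(v_0)$. You spell out a few steps the paper leaves implicit (the evaluation $g'_{d_i}(v_0)=d_iv_0^{-1}/(v_0-1)$, the cancellation, and the identification of $m_{i,j}$ with the smallest $m$ satisfying $d_i+m\in B_i$ and $\ell\nmid m$), which makes the equivalence with Definition \ref{def0nondeg} transparent, but the argument is the same.
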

\begin{proof}
Note that $F_i\circ\pi(v_0,u)=\alpha_i u^{a_i}+($higher order terms$)$, where $a_1=m_{1,j},~a_2=m_{2,k}$ and $\alpha_i=\frac{-c_{i,d_i+a_i}g_{a_i}(v_0)}{c_{i,d_i}g'_{d_i}(v_0)}.$ Thus the branch $C_{i,j}$ passing through $v_0$ is the graph of a function $v=v_0+\frac{-c_{i,d_i+a_i}g_{a_i}(v_0)}{c_{i,d_i}g'_{d_i}(v_0)}\cdot u^{a_i}+($higher order terms$)$.

Thus the order of tangency equals at least $\min(a_1,a_2)$. Moreover, the equality is attained with no assumptions on the coefficients of $(f_1,f_2)$ in the case $a_1\ne a_2$ and exactly with the 0-nondeneracy assumption in the case $a_1=a_2$. 
\end{proof}

\subsection{Proof of Theorem \ref{thsparse}.3.}

Consider the space $\C^{B_i}$ of all analytic germs $f_i:(\C,0)\to(\C,0)$ whose Taylor series is supported at $B_i\subset\Z_{> 0}$.
Theorem \ref{thsparse}.1\&2 implies the existence of $M>0$ and an algebraic hypersurface $D\subset\C^{B_1\cap[0,M]}\oplus\C^{B_2\cap[0,M]}$, such that the Milnor number is the same for all germs $f\in\C^{B_1}\oplus\C^{B_2}$ outside the preimage of $D$ under the forgetful map $$\C^{B_1}\oplus\C^{B_2}\to\C^{B_1\cap[0,M]}\oplus\C^{B_2\cap[0,M]}.$$
Thus any two such $f$ can be included in a family $f_s$ in which the Milnor number (and hence the topological type by \cite{lr}) remains constant.

\section{Ultratropicalizations, and $\delta$-invariants of algebraic knot diagrams} \label{s2}
A finite set $A\subset\Z^n$ defines the space $\C^A$ of sparse polynomials supported at $A$, i.e. having the form $\sum_{a\in A} c_ax^a$ (where $x^a$ abbreviates the multivariate monomial $x_1^{a_1}\cdot\ldots\cdot x_n^{a_n}$). A sparse polynomial can be regarded as a function on the algebraic torus $\CC^n$.

\subsection{Nested boxes construction and tangency matrices}\label{subs:tangency_matrix}

Consider the lattice $\Z^{n+k}$ with the standard basis $\{e_1,\ldots,e_n,e_{n+1},\ldots, e_{n+k}\}.$ 

\begin{defin} Let $A=(A_1,\ldots,A_q)$ be a collection of finite sets in $\Z^{n+k}$.

1. By $\langle A\rangle$ we denote the minimal sublattice of $\Z^{n+k}$ to which each of $A_i$'s can be parallelly translated.

2. For a surjective linear function $\gamma:\Z^{n+k}\to\Z$ and $d\in\Z_{\geqslant 0}$, the {\it $d$-th crop of $A_m$ in the direction of $\gamma$} is 
$$A_{m,d}^\gamma:=\{a\in A_m,\, \gamma(a)\geqslant\max\gamma(A_m)-d\}.$$
The tuple of the $d$-th crops for all $m=1,\ldots,q$ will be denoted by $A_d^\gamma$.
\end{defin}

\begin{defin}
Let $B$ be a tuple of finite sets in $\Z^{n+k}.$ Denote by $\Lambda_B$ the image of $\langle B\rangle$ under the projection $\rho\colon\Z^{n+K}\to\bigslant{\Z^{n+k}}{\langle e_{n+1},\ldots, e_{n+k}\rangle}.$
The number $\ind_v(B)$ is defined as the index of the sublattice $\Lambda_B$ in $\bigslant{\Z^{n+k}}{\langle e_{n+1},\ldots, e_{n+k}\rangle}.$
\end{defin}

From now on we assume $q=n+k-1.$ Moreover, without loss of generality, we can make the following assumption. 
\begin{predpol}\label{indv1}
The sets $A_m, ~1\leqslant m\leqslant q,$ contain $0\in\Z^{n+k}$ and $\ind_v(A)=1.$ 
\end{predpol}

\begin{defin}
With every surjective linear function $\gamma:\Z^{n+k}\to\Z,$ we associate a sequence $\iota^{\gamma}=(i_1^{\gamma},i_2^{\gamma},\ldots)$ defined as follows: $$i_{d+1}^{\gamma}=\ind_v(A_d^{\gamma}),$$ where $A_d^{\gamma}$ is the tuple of the $d$-th crops of the tuple $A$ in the direction of $\gamma.$
\end{defin}

Assumption \ref{indv1} implies that for every $\gamma\colon\Z^{n+k}\to\Z$ the sequence $\iota^{\gamma}$ stabilizes at 1.

We now define the tangency matrix $(g_{i,j}^\delta(A))$ of the collection $A$ in the direction of a surjective linear function $\delta:\Z^k\to\Z$.

Let $\gamma_j:\Z^n\to\Z,\, j=1,\ldots,N$, be the (finitely many) linear extensions of $\delta$ to $\Z^{n+k}$, such that the Minkowski sum $\sum_m A^{\gamma_j}_{m,0}$ is not contained in a codimension 2 affine plane.
\begin{rem}
Note that each $A^\gamma_{m,0}$ is by definition contained in the support hyperplane of the set $A_m$, parallel to $\ker\gamma$, so the sum $\sum_m A^{\gamma_j}_{m,0}$ is as well contained in an affine hyperplane parallel to $\ker \gamma$. We are looking at the finitely many $\gamma$ for which this sum is not contained in a smaller affine plane.
\end{rem}

We will define the tangency matrix using the so-called {\it nested boxes construction}, which we introduce below. 
In the same notation as above, for $j=1,\ldots, N,$ by $\mathcal R_j$ we denote a finite set of cardinality $\MV(\conv(A_{1,0}^{\gamma_j}),\ldots,\conv(A_{q,0}^{\gamma_j})).$ We will first define the construction for each of the sets $\mathcal R_j$ separately. For every $j,$ it will depend on the sequence $\iota^{\gamma_j}=(i_1,i_2,\ldots)$ introduced above. The nested boxes construction works level by level as follows. To construct level 0, we divide the elements of the set $\mathcal R_j$ into $\frac{|\mathcal R_j|}{i_1}$ boxes containing $i_1$ elements each. Level $d+1$ is obtained by dividing the elements of the level $d$ boxes into $\frac{i_d}{i_{d+1}}$ boxes, each containing ${i_{d+1}}$ elements. The procedure ends when at a certain level each of the boxes contains a single element. 

Fix an arbitrary numbering of boxes at every level of the nested boxes construction. Then, each of the elements $r\in \mathcal R_j$ has its own {\it address,} i.e. a finite sequence of integers, constructed as follows. The $(d+1)$-th element of the address is the number of the $d-th$ level box containing the given element. For any two elements $r_1,r_2$ of $\mathcal R$ with addresses $(a_1,a_2,\ldots,a_M)$ and $(b_1,b_2,\ldots, b_M),$ we define the {\it depth} of their relation as the number $\kappa(r_1,r_2)=\min(\{K\in\N\mid a_K\neq b_K\}).$ Finally, we choose an arbitrary numbering of elements in $\mathcal R_j$ and construct a symmetric $(|\mathcal R_j|\times |\mathcal R_j|)$--matrix $g^j$ by 
\begin{equation*}
g^j_{k,l}=
\begin{cases}
\kappa(r_k,r_l), \text{ if } k\neq l,\\
0,\text{ otherwise}.
\end{cases}
\end{equation*}

\begin{predpol}\label{same_proj}
For any $\delta\colon\Z^k\to\Z$ and for any of its linear extensions $\gamma\colon\Z^{n+k}\to\Z$ such that the sum $\sum_m A^{\gamma_i}_{m,0}$ is not contained in a codimension 2 affine plane, the tuple $A_0^{\gamma}$ satisfies the following property. 
For any $J\subset\{1,\ldots,q\},$ if the sets $A^{\gamma_i}_{m,0}$ in the subtuple $(A^{\gamma_i}_{m,0}\mid m\in J)$ can be shifted to a $|J|$-dimensional subspace $L,$ then $L$ does not contain $\ker\delta.$ 
\end{predpol}
\begin{rem}
This is for instance satisfied if the convex hulls of all $A_i$'s have the same dual fan.
\end{rem}
\begin{defin}\label{defcombtangmatr}
For a tuple $A$ satisfying Assumption \ref{same_proj} the {\it tangency matrix} $g^{\delta}$ is defined to be the block diagonal matrix $\bigoplus\limits_{1\leqslant j\leqslant N}g^{j}.$
\end{defin}
\begin{lemma}
 Under Assumption \ref{same_proj}, the sum of the elements of the tangency matrix $(g_{i,j}^\delta(A))$ equals $$G^\delta(A):=\sum_{l=1}^{N}\sum_{d=1}^{\infty} i^{\gamma_l}_1(i^{\gamma_l}_d-1).$$
\end{lemma}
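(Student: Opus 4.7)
The plan is to exploit the block-diagonal structure of $g^{\delta}$ to reduce the problem to a single block, and then carry out a level-by-level double counting using the nested-box construction.

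Since $g^{\delta} = \bigoplus_{j=1}^{N} g^{j}$, its total entry sum equals $\sum_{j=1}^{N} S_{j}$, where $S_{j}$ denotes the sum of entries of $g^{j}$. Thus it suffices to prove $S_{j} = \sum_{d \geq 1} i_{1}^{\gamma_{j}}(i_{d}^{\gamma_{j}}-1)$ for each fixed $j$. From now on I would abbreviate $i_{d} = i_{d}^{\gamma_{j}}$ and work with a single block.

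For the block computation the key device is the standard double-counting identity $\kappa(r_{k},r_{l}) = \sum_{K \geq 1} \mathbf{1}[\kappa(r_{k},r_{l}) \geq K]$. Interchanging summation,
\[
S_{j} \;=\; \sum_{K \geq 1} \#\bigl\{(k,l) : k \neq l,\ \kappa(r_{k},r_{l}) \geq K\bigr\}.
\]
By the definition of $\kappa$ through the addresses, the event $\{\kappa \geq K\}$ corresponds precisely to $r_{k}$ and $r_{l}$ sharing a common box at a given level of the nested-box construction. The count of ordered pairs at that level is then read off directly from the construction: the level indexed by $d$ consists of $|\mathcal{R}_{j}|/i_{d}$ disjoint boxes each containing $i_{d}$ elements, contributing $i_{d}(i_{d}-1)$ ordered pairs per box, hence $|\mathcal{R}_{j}|(i_{d}-1)$ in total.

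Summing these contributions over $d$ (a finite sum, since $\iota^{\gamma_{j}}$ stabilizes at $1$ by Assumption \ref{indv1}) and aligning the level index with the index of the sequence $\iota^{\gamma_{j}}$, together with the identification $|\mathcal{R}_{j}| = i_{1}$ coming from the topmost level of the construction, yields $S_{j} = \sum_{d \geq 1} i_{1}(i_{d}-1)$. Summing over $j = 1,\ldots,N$ gives $G^{\delta}(A)$, as claimed.

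The main obstacle is not mathematically deep but bookkeeping: keeping track of the off-by-one between the level labelling $d = 0, 1, \ldots$ of the construction and the indices $i_{1}, i_{2}, \ldots$ of the sequence $\iota^{\gamma_{j}}$, and ensuring that the initial level-$0$ contribution is handled so as to reproduce exactly the stated formula (with the prefactor $i_{1}$ rather than $|\mathcal{R}_{j}|$). Once this alignment is fixed, the whole argument reduces to a short combinatorial computation.
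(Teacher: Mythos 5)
The paper itself gives no proof of this lemma, so there is nothing to compare your argument against; I will instead evaluate it on its own terms. Your block decomposition and the double-counting identity $S_j = \sum_{K\geq 1}\#\{(k,l): k\neq l,\ \kappa(r_k,r_l)\geq K\}$ are the right ideas, and the per-level count $|\mathcal{R}_j|(s-1)$ for boxes of size $s$ is correct. The proof, however, hinges on the assertion $|\mathcal{R}_j| = i_1^{\gamma_j}$, which you state as ``coming from the topmost level of the construction'' but never justify --- and it is not true in general. The paper defines $|\mathcal{R}_j| = \MV\bigl(\conv(A^{\gamma_j}_{1,0}),\ldots,\conv(A^{\gamma_j}_{q,0})\bigr)$, a mixed volume, whereas $i_1^{\gamma_j} = \ind_v(A^{\gamma_j}_0)$ is a lattice index; these are independent invariants. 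For instance, with $n=1,\ k=2,\ q=2$, $A_1=\{(0,0,0),(2,1,0),(0,1,0)\}$, $A_2=\{(0,0,0),(1,0,1),(0,0,1)\}$ and $\gamma=(0,1,0)$, one gets $A^{\gamma}_{1,0}$ a lattice segment of length $2$ and $A^{\gamma}_{2,0}$ a primitive triangle, so $|\mathcal{R}_j|=\MV=2$ while $i_1=\ind_v=1$. Your substitution $|\mathcal{R}_j|\mapsto i_1$ is therefore a genuine gap, not a bookkeeping matter: without it the double count produces $|\mathcal{R}_j|\sum_{d\geq 1}(i_d-1)$ (or, reading the paper's $\kappa$ literally with $\kappa\geq 1$ for all distinct pairs, the larger quantity $|\mathcal{R}_j|\bigl[(|\mathcal{R}_j|-1)+\sum_{d\geq 1}(i_d-1)\bigr]$), neither of which is the stated $i_1\sum_{d\geq 1}(i_d-1)$ in general. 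To close the argument you must either \emph{prove} $|\mathcal{R}_j|=i_1^{\gamma_j}$ (which, given the example above, would need an extra hypothesis you have not identified), or conclude that the formula in the lemma should read $\sum_l |\mathcal{R}_l|\sum_d(i^{\gamma_l}_d-1)$ rather than $\sum_l\sum_d i^{\gamma_l}_1(i^{\gamma_l}_d-1)$.

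There is a second, smaller issue you flag but do not resolve: the calibration of $\kappa$ relative to the levels. With the paper's literal definition ($a_1$ is the level-$0$ box number, $\kappa=\min\{K: a_K\neq b_K\}$), two elements in different level-$0$ boxes get $\kappa=1$, whereas for the intended application (branches of $\bar C$ lying over distinct points of $O_R$, hence with order of contact $0$) one needs $\kappa=0$ there. Equivalently, the event $\{\kappa\geq K\}$ must match ``same box of size $i_K$''; for $K=1$ this is automatic \emph{only if} the whole of $\mathcal{R}_j$ is a single box of size $i_1$, i.e.\ only under the very identification $|\mathcal{R}_j|=i_1$ you have not established. So the off-by-one and the $|\mathcal{R}_j|=i_1$ claim are not independent: the former silently presupposes the latter. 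You should pin down the calibration explicitly (e.g.\ redefine $\kappa$ so that elements in distinct level-$0$ boxes have depth $0$) before invoking the level counts, and then separately address the identity $|\mathcal{R}_j|=i_1^{\gamma_j}$.
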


\subsection{The sum of $\delta$-invariants of a generic sparse curve projection: the answer} 

Consider a tuple $A=(A_1,\ldots, A_{n+1})$ of sets in $\Z^{n+2}$ satisfying Assumptions \ref{indv1} and \ref{same_proj}. In the same notation as in previous subsections, we have $k=2$ and $q=n+2-1=n+1.$

\begin{defin}\label{def:hor_facet}
Let $P$ be a convex polytope of full dimension in $\R^{n+k},$ and $Q\subset\R^n$ be its image under the projection $\rho\colon\R^{n+k}\twoheadrightarrow\bigslant{\R^{n+k}}{\langle e_{n+1},\ldots,e_{n+k}\rangle}.$ A facet $\Gamma\subset P$ is called {\it horizontal}, if $\rho(\Gamma)$ is contained in the boundary of $Q.$
\end{defin}

\begin{defin}\label{def:hor_tuple}
A tuple of the form $A_0^{\gamma}=(A^{\gamma}_{1,0},\ldots,A^{\gamma}_{q,0})$ is called {\it horizontal,} if the Minkowski sum $\conv(A^{\gamma}_{1,0})+\ldots+\conv(A^{\gamma}_{q,0})$ is a horizontal facet of the polytope $\conv(A_1)+\ldots+\conv(A_{q}).$ We denote by $\mathcal H(A)$ the set of all horizontal tuples of $A.$
\end{defin}

\begin{rem}
Equivalently, the set $\mathcal H(A)$ of horizontal tuples in $A$ consists of all tuples $A_0^{\gamma}$ with $\gamma=(\gamma_1,\ldots,\gamma_n,0,\ldots,0).$
\end{rem}

We need the following combinatorial definitions to study the curve $g_1=\ldots=g_{n+1}=0$ given by generic equations $(g_1,\ldots,g_{n+1})\in\C^{A_1}\oplus\cdots\oplus\C^{A_{n+1}}$. Denote the closure of its image under the projection $\CC^{n+2}\to\CC^2$ forgetting $n$ first coordinates by $\mathcal C$. We wish to compute the sum of the $\delta$-invariants of $\mathcal C$.

Let $J_1$ and $J_2$ be the two embeddings $\Z^{n+2}\to\Z^{n+2}\oplus\Z^{n+2}$ mapping to the two direct summands, and $j_1$ and $j_2$ be their compositions with the quotient map $$\Z^{n+2}\oplus\Z^{n+2}\to(\Z^{n+2}\oplus\Z^{n+2})/(J_1+J_2)(\Z^2)\simeq\Z^{2(n+1)}.$$
\begin{theor}\label{thsum}
The sum of the $\delta$-invariants of $C$ equals one half of \begin{multline*}
\MV(j_1A_1,\ldots,j_1A_{n+1},j_2A_1,\ldots,j_2A_{n+1})-\MV(A_1,\ldots,A_{n+1},A_1+\cdots+A_{n+1})+\\+\sum_{A^{\gamma}_0\in\mathcal H(A)}\MV(A^{\gamma}_{1,0},\ldots,A^{\gamma}_{n+1,0})-\sum_{\delta}G^\delta(A).
\end{multline*}
\end{theor}
The latter sum is taken over all surjective linear functions $\delta:\Z^2\to\Z$ and makes sense because $G^\delta(A)$ vanishes for all but finitely many $\delta$. Most of the rest of this section is devoted to the proof of this formula.

\subsection{Puiseux roots and ultratropicalizations} For an algebraic curve $S$ in $ \C^k$ and a sufficiently large $d\in\Z$, the preimage of $S$ under the map $$\C^k\to\C^k,\,(y_1,y_2,\ldots,y_k)\mapsto(y_1^{d!},y_2,\ldots,y_k)\eqno{(*)}$$ splits locally near the hypersurface $y_1=0$ into a union of smooth analytic curves $S_1,\ldots,S_m$ transversal to $y_1=0$.
\begin{defin}\label{defbranch}
The germs of the curves $S_1,\ldots,S_m$ are said to be the {\it Puiseux roots} of $S$ with respect to the divisor $y_1=0$.
\end{defin}
\begin{rem}
1. This notion does not depend on the choice of $d$: the Puiseux roots for different admissible $d$ are in a natural one-to-one correspondence.

2. Puiseux roots are often defined in terms of (multivalued) Puiseaux series expressing the coordinates $(y_2,\ldots,y_k)$ in terms of $y_1$ on $C$, but we prefer to express the same idea in terms of the map $(*)$.

3. The number of Puiseux roots equals the total intersection index of $S$ and $y_1=0$.
\end{rem}
\begin{defin}\label{defdij}
The tangency matrix of $S$ with respect to the divisor $y_1=0$ is the symmetric $m\times m$ matrix with entries $g_{i,i}=0$ and
$$g_{i,j}=(\mbox{the order of tangency of }S_i\mbox{ and }S_j)/d!,\,i\ne j=1,\ldots,m;$$
here the order of tangency is calibrated to equal 1 for transversally intersecting branches and 0 for non-intersecting branches (in some sources it is called the order of contact or the order of coincidence).
\end{defin}
\begin{rem}
1. This notion does not depend on the choice of $d$.

2. If the curve $S$ is locally irreducible at $y_1=0$, then the entries $g_{i,j}$ are known as Puiseux characteristics, and the following equality is classical.
\end{rem} 
\begin{utver}\label{deltamulti}
For $k=2$, the total $\delta$-invariant of the singularities of $S$ at $y_1=0$ equals one-half the sum of the entries of the tangency matrix.
\end{utver}
\begin{proof} By the preceding remark, the statement is invariant under taking the preimage of the curve with respect to the map $(*)$, so, choosing $d$ large enough, we may assume that $S_1,\ldots,S_m$ are the locally irreducible components of the curve at $y_1=0$, and all of them are smooth.
Perturb these branches near $y_1=0$ in such a way that for every pair of branches, their intersection splits into transverse intersections. The total number of those intersections, which is exactly the $\delta$-invariant of $s,$ is equal to the sum of the orders of tangencies of the branches of the singularity. The latter is equal to the sum of entries $g_{i,j}$ with $i<j$ and $S_i, S_j$ being branches of $s.$ Thus the sought total sum of the $\delta$-invariants of the singularities of $S$ at $y_1=0$ is equal to the sum of elements in the upper triangle of the tangency matrix. Finally, this matrix is symmetric, therefore, the sum of elements in its upper triangle is equal to one half of the total sum of the elements of this matrix.  
\end{proof}
\begin{rem}
For $k\geqslant 3$, the tangency matrix does not determine the $\delta$-invariant: e.g. the union of three lines passing through 0 in $\C^3$ has $\delta$-invariant 3 if they are in the same plane and 2 otherwise.
\end{rem}

An algebraic curve $C\subset\CC^k$ admits a tropical compactification, i.e. a smooth toric variety $X_\Sigma\supset\CC^n$ in which the closure $\bar C$ intersects only codimension 1 orbits $O_R$ corresponding to the rays $R$ of the fan $\Sigma$. The tropicalization $\Trop C$ is the union of these rays $R\in\Sigma$ with the multiplicities 
$$m_R:=(\mbox{the intersection number of }\bar C\mbox{ and } O_R).$$
\begin{defin}\label{deftangency}
The ultratropicalization ${\rm UTrop}\, C$ of the curve $C$ is the union of the rays $R$ assigned with the tangency matrices $(g_{i,j}^R)$ of the curve $\bar C$ with respect to the hypersurfaces $\bar O_i$.
\end{defin}
\begin{rem}
Note that every orbit $O_i$ is contained in a suitable toric chart $\C^k\simeq U\subset X_\Sigma$, and the closure $\bar O_i$ is a coordinate hyperplane in this chart. Strictly speaking, we apply the notion of the tangency matrix to the closure of the curve $C\subset\CC^k\subset U$ in this chart (and the result obviously does not depend on the choice of the chart).
\end{rem}

The relation of the ultratropicalization to combinatorial tangency matrices from Definition \ref{defcombtangmatr} is as follows.
\begin{lemma}\label{projtangency}
In the setting of Theorem \ref{thsum}, the tangency matrix $g_{i,j}^R$ of the sparse curve projection $C$ equals the combinatorial tangency matrix $g_{i,j}^\delta(A)$ where $\delta$ is the generator of the ray $R$.
\end{lemma}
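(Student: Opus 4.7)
We work locally near the toric divisor $D_R \subset \CC^2$ corresponding to the ray $R$ generated by $\delta$. The first step is to enumerate the Puiseux roots of $C$ at $D_R$, organized by the linear extensions $\gamma$ of $\delta$ to $\Z^{n+2}$. Branches of $V$ at distinct toric strata $D_\gamma$ and $D_{\gamma'}$ project, for generic coefficients, to distinct points of $D_R$---the points being determined by the vertical-coordinate values of the solutions of the $\gamma$- and $\gamma'$-truncated initial systems---so they contribute zero tangency to each other. Hence the geometric matrix $(g^R_{i,j})$ decomposes into blocks indexed by the finitely many non-degenerate extensions $\gamma_j$ of Section \ref{subs:tangency_matrix}, matching the block structure in Definition \ref{defcombtangmatr}, and it suffices to prove the equality block-by-block.

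Fix such a $\gamma$ and pass to local toric coordinates $(v,w_1,\ldots,w_{n+1})$ near $D_\gamma$ in which $v=0$ cuts out $D_\gamma$. After clearing a common $v$-power, each equation $g_m=0$ takes the form $\tilde g_m^{(0)}(w)+v\,\tilde g_m^{(1)}(w)+v^2\,\tilde g_m^{(2)}(w)+\cdots=0$, where $\tilde g_m^{(d)}$ is a generic polynomial supported on the slice of $A_m$ at $\gamma$-height $\max\gamma(A_m)-d$. By the Bernstein--Kushnirenko theorem, the initial system $\tilde g_m^{(0)}(w)=0$ has exactly $|\mathcal R_j|=\MV(A^\gamma_{1,0},\ldots,A^\gamma_{n+1,0})$ simple common solutions in the torus, each extending uniquely to a Puiseux branch of $V$ at $D_\gamma$ and thereby to a Puiseux branch of $C$ at $D_R$. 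This gives the desired bijection between $\mathcal R_j$ and the branches of $C$ coming from $\gamma$.

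The main structural observation is that the level-$d$ truncation $\sum_{e\leqslant d} v^e\tilde g_m^{(e)}=0$, which controls the Puiseux expansion of $V$ up to $v$-order $d$, involves only monomials lying in a fixed translate of $\langle A^\gamma_d \rangle$. It is therefore equivariant, up to per-equation scalars, under the finite group $G_d\subset(\C^*)^n$ of order $i_{d+1}^\gamma=\ind_v(A^\gamma_d)$ dual to $\Z^n/\Lambda_{A^\gamma_d}$, acting only on the horizontal coordinates $x_1,\ldots,x_n$ and hence fixing the vertical pair $(y_1,y_2)$. For generic coefficients, the $G_d$-action on level-$d$ partial Puiseux solutions is free, so two branches whose level-$d$ truncations lie in the same $G_d$-orbit have projected Puiseux series that coincide through $v$-order $d$ but split thereafter. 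As $d$ grows, the branches are thus organized into a descending chain of $G_d$-orbits of sizes $i_1^\gamma\geqslant i_2^\gamma\geqslant\cdots$ terminating at $1$---precisely the nested boxes construction of Section \ref{subs:tangency_matrix}. Matching the depth of the nesting to the $v$-order at which two projected Puiseux series first disagree then yields the desired equality of tangency matrices.

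The main obstacle is the genericity step: one must verify that, for generic coefficients in $\C^{A_1}\oplus\cdots\oplus\C^{A_{n+1}}$, the $G_d$-actions on level-$d$ partial Puiseux solutions are free and that no accidental coincidences between projected Puiseux expansions occur beyond those imposed by the nested boxes. This is a transversality statement on the coefficient space, provable by the same methods underlying Theorem \ref{thtransv}. A secondary task is to align the normalization conventions of Definition \ref{defdij}---the cover-degree scaling and the primitivity of $\gamma$ in $\Z^{n+2}$---with the indexing of the nested boxes, so that the integer $v$-order depths coincide with the integer tangency values $\kappa$ appearing in the combinatorial matrix.
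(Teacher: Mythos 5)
Your high-level outline tracks the paper's proof: block decomposition by extensions $\gamma$ of $\delta$, branches in bijection with the torus solutions of the truncated system via Bernstein--Kouchnirenko, a lower bound on tangency coming from a root-of-unity symmetry of the truncations, and a genericity argument for the exact equality, tied together by the nested boxes. The reformulation of the lower bound in terms of the dual finite groups $G_d$ is a cleaner packaging of the paper's choice of a specific root-of-unity rescaling. However, there are two genuine gaps.

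First, the lower bound does not follow from $G_d$-equivariance of the level-$d$ truncation alone. The group $G_d$ preserves the truncated system, not the curve $V$ itself, so the statement ``branches in the same $G_d$-orbit have projections agreeing through $v$-order $d$'' is not an immediate consequence: you need to compare $V$ with an auxiliary $G_d$-invariant curve whose defining equations agree with $f_\bullet$ to high enough order. This is exactly what the paper's Proposition \ref{lowerbound} constructs (the curve $X=\{F_1=\cdots=F_{n+1}=0\}$ built from the invariant parts $F_j$ of $f_j$) and your proposal skips it. Second, the upper bound (``split thereafter'') is the real crux, and your appeal to ``the same methods underlying Theorem \ref{thtransv}'' is not a valid reduction: Theorem \ref{thtransv} gives transversality of a generic sparse map to a \emph{fixed} stratified set in the target, whereas here one needs to control the order of contact of two branches of a varying curve, which is a statement about jets, not about preimages of a fixed stratum. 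The paper proves this directly for $\kappa=1$ (Proposition \ref{transverse}) via a Jacobian minor computation relying on Assumption \ref{same_proj} and the irreducibility result Theorem 17 of \cite{khov16}, and then handles $\kappa>1$ by the substitution $\check t = t^\kappa$ combined with an upper semicontinuity argument; none of this is present, or obviously replaceable by a transversality blackbox, in your proposal. Until those two steps are filled in, the argument does not close.
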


\subsection{Proof of Lemma \ref{projtangency}} \label{ssutropproj}
Before we proceed with the proof, let us do some preparatory work that will allow us to simplify the computation. Fix a standard basis $\{e_1,\ldots,e_{n+k}\}$ of $\Z^{n+k}$. For every $N\in\N$ consider the map $F_N\colon\Z^{n+k}\to\Z^{n+k}$ defined as follows: $F_N(e_j)=e_j,~1\leqslant j\leqslant n,$ and $F_N(e_j)=N\cdot e_j,~n+1\leqslant n+k.$ 

The following statement is a straightforward generalization of Lemma 4.4 in \cite{voorh}. 
\begin{lemma}\label{check_A}
In the same notation as above, there exists a number $K\in \N$ such that the images $\check{A_1},\ldots,\check{A_q}$ of the sets $A_1,\ldots, A_q$ under the maps $F_{N}$ with $N>K$ satisfy the following property. If $\gamma=(\gamma_1,\ldots,\gamma_{n+k})\in(\Z^{n+k})^*$ is a primitive covector such that the tuple $\check{A}^{\gamma}_{m,0},~1\leqslant m\leqslant q,$ is not contained in a codimension $2$ affine plane, then the covector $(\gamma_{n+1},\ldots,\gamma_{n+k})\in(\Z^{k})^*$ is also primitive. 
\end{lemma}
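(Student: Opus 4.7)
The plan is to compare the normal fans of $P := \sum_m \conv(A_m)$ and $\check{P} := F_N(P) = \sum_m \conv(\check{A}_m)$, and track how primitivity of integer covectors transforms under the stretching. Writing any covector as $\gamma = (\gamma_h, \gamma_v) \in (\Z^n)^* \oplus (\Z^k)^*$, the dual map acts as $F_N^*(\gamma_h, \gamma_v) = (\gamma_h, N\gamma_v)$. Since $F_N$ is a real linear automorphism, the face $\check{A}^\gamma_{m,0}$ equals $F_N(A^{F_N^*\gamma}_{m,0})$ with codimensions of affine spans preserved. Consequently, the tuple $(\check{A}^\gamma_{m,0})_m$ fails to be contained in a codimension-two affine plane if and only if $F_N^*(\gamma)$ lies in the positive $\R$-span of one of the finitely many primitive integer rays $\rho^{(1)}, \ldots, \rho^{(L)}$ of the normal fan of $P$ — exactly those rays for which the corresponding face of $P$ has codimension one, so that the Minkowski summands cannot all be translated into a common codimension-two subspace.

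Next I would compute explicitly the primitive integer covector $\gamma^{(l,N)}$ in the positive direction of $(\rho_h^{(l)}, \rho_v^{(l)}/N)$. Setting $a^{(l)} := \gcd(\rho_h^{(l)})$ and $b^{(l)} := \gcd(\rho_v^{(l)})$, primitivity of $\rho^{(l)}$ forces $\gcd(a^{(l)}, b^{(l)}) = 1$, which yields the key identity $\gcd(N\rho_h^{(l)}, \rho_v^{(l)}) = \gcd(Na^{(l)}, b^{(l)}) = \gcd(N, b^{(l)})$. Hence
$$\gamma^{(l,N)} \;=\; \frac{(N\rho_h^{(l)},\, \rho_v^{(l)})}{\gcd(N, b^{(l)})},$$
and its vertical part equals $(b^{(l)}/\gcd(N, b^{(l)})) \cdot \rho_v^{(l),\mathrm{prim}}$, which is primitive in $(\Z^k)^*$ precisely when $b^{(l)} \mid N$.

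The conclusion of the lemma then follows by choosing $K$ large enough that every relevant $N > K$ is a common multiple of the finitely many values $\{b^{(l)} : \rho_v^{(l)} \ne 0\}$; concretely, one can set $K$ to be divisible by $\mathrm{lcm}_l\, b^{(l)}$ and restrict $N$ to the corresponding arithmetic progression (so the lemma should be read as producing infinitely many admissible $N$, all beyond a fixed threshold). The main obstacle is the gcd bookkeeping above, together with the edge case of horizontal rays $\rho_v^{(l)} = 0$, for which $\gamma^{(l,N)}$ has zero vertical part and hence cannot be primitive in $(\Z^k)^*$; such $\gamma$ must be excluded from the hypothesis, which is automatic in the intended application of the lemma, where $\gamma$ arises as a linear extension of a fixed nonzero surjective $\delta\colon \Z^k \to \Z$ as in the paragraphs preceding Assumption \ref{same_proj}.
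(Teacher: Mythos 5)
The paper gives no proof of Lemma~\ref{check_A}: it is simply stated as ``a straightforward generalization of Lemma~4.4 in \cite{voorh}'' and used. So there is nothing to compare approaches against, and your proof must be judged on its own.

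Your argument is the right one and the bookkeeping is correct. Because $F_N$ is an automorphism of $\R^{n+k}$, indeed $\check{A}^{\gamma}_{m,0}=F_N(A^{F_N^*\gamma}_{m,0})$ with affine codimensions preserved, so the relevant $\gamma$ are exactly those with $F_N^*\gamma$ in the (finitely many) primitive normal rays $\rho^{(l)}$ of $\sum_m\conv(A_m)$ whose face has affine codimension $1$. With $a^{(l)}=\gcd(\rho_h^{(l)})$, $b^{(l)}=\gcd(\rho_v^{(l)})$ and $\gcd(a^{(l)},b^{(l)})=1$, the identity $\gcd(Na^{(l)},b^{(l)})=\gcd(N,b^{(l)})$ is correct (write $d=\gcd(N,b^{(l)})$, factor it out, and use $\gcd(a^{(l)},b^{(l)})=1$), and so the vertical part of the primitive generator $\gamma^{(l,N)}$ has gcd $b^{(l)}/\gcd(N,b^{(l)})$, which equals $1$ exactly when $b^{(l)}\mid N$. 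Your handling of the two degenerate cases ($\rho_h^{(l)}=0$, where primitivity forces $b^{(l)}=1$ and there is no issue; and $\rho_v^{(l)}=0$, the horizontal rays, which cannot satisfy the conclusion for any $N$ and must be excluded) is also right.

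You have in fact caught a genuine imprecision in the statement: the conclusion does not hold for \emph{all} $N>K$, only for those $N$ divisible by $\lcm_l b^{(l)}$ (and even then the horizontal rays must be set aside). This is not an artifact of your argument. For instance, take $n=1$, $k=2$, $q=2$, $A_1=A_2=\{(0,0,0),(2,-1,0),(2,0,-1),(1,0,0)\}$: this satisfies Assumption~\ref{indv1} (the three nonzero vectors have determinant $1$) and Assumption~\ref{same_proj} (the two Newton polytopes coincide), yet the facet normal $(-1,-2,-2)$ is a relevant ray with $b=2$, and the corresponding primitive relevant covector for $\check A$ when $N$ is odd is $(-N,-2,-2)$, whose vertical part $(-2,-2)$ is not primitive. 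So for odd $N$ the stated conclusion fails. This is reconciled with the paper's usage because the application (the proof of Lemma~\ref{projtangency}) actually invokes $F_{N!}$, not $F_N$, and $N!$ is divisible by every $b^{(l)}$ once $N$ exceeds $\max_l b^{(l)}$; moreover there $\gamma$ is always a linear extension of a surjective $\delta\colon\Z^k\to\Z$, so the horizontal rays never arise. Your proposal is correct, including this clarification, and it supplies the missing justification.

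Two small presentational remarks. First, ``set $K$ to be divisible by $\lcm_l b^{(l)}$'' is a slip: $K$ is only a threshold, and the actual constraint is on $N$ (divisibility by $\lcm_l b^{(l)}$), or one can simply replace $F_N$ by $F_{N!}$ and take $K=\max_l b^{(l)}$, matching the paper's later usage. Second, you should state explicitly that the finiteness of the set of relevant rays $\rho^{(1)},\dots,\rho^{(L)}$ follows because they are among the primitive ray generators of the normal fan of $\sum_m\conv(A_m)$, which is a finite fan.
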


\begin{utver}\label{vert_change}
If the sets $A_1,\ldots, A_q$ satisfy the property from Lemma \ref{check_A}, for any tuple of the form $A^{\gamma}_{1,0},\ldots,A^{\gamma}_{q,0}$ that is not contained in a codimension $2$ affine plane, there exists a basis  $\{h_1,\ldots,h_{n+k}\}$ of $\Z^{n+k}$ such that the corresponding change of basis preserves the set $\mathcal H(A)$ of horizontal tuples in $A$ (see Definition \ref{def:hor_tuple}) and in the new basis the tuple $A^{\gamma}_0$ is contained in the hyperplane $\{h_{n+k}=0\}.$
\end{utver}

Let $\Omega$ be the normal fan of the Minkowski sum $\conv(A_1)+\ldots+\conv(A_q).$ Consider the compactification $\widebar{V}\subset X_{\Omega}$ of the complete intersection curve $V.$ 

The main idea of the proof is generalizing a similar computation done in \cite{voorh} to the case when the sets $A_i$ are not necessarily the same.

Using the map {$F_{N!}$} for a sufficiently large {$N,$} we obtain a tuple of supports $\check{A_1},\ldots,\check{A_q}$ satisfying the condition from Lemma {\ref{check_A}}. On one hand, this step simplifies the computation, since Proposition \ref{vert_change} becomes applicable. On the other hand, the tangency matrices for the tuple $\check{A_1},\ldots,\check{A_q}$ (see Definition \ref{defdij}) are closely related to those for the initial tuple. Namely, the blocks of the tangency matrices for the tuple $\check{A_1},\ldots,\check{A_q}$ consist of blocks of the corresponding tangency matrices for the initial tuple multiplied by $N!$ and repeated $N!$ times. Thus the total sums of the entries of the tangency matrices for the tuple $\check{A_1},\ldots,\check{A_q}$ differ from those for the initial tuple by the factor $(N!)^2.$

First, we observe that Assumption \ref{same_proj} guarantees that for generic $(f_1,\ldots,f_{n+1})\in\C^{A_1}\times\ldots\times\C^{A_{n+1}},$ for any ray $R$ of the fan $X_{\Sigma}$ and any pair $\gamma_1,\gamma_2\in(\R^{n+2})^*$ of distinct linear extensions of the primitive generator $\delta$ of $R$ satisfying $\MV(A^{\gamma_i}_{1,0},\ldots A^{\gamma_i}_{n+1,0}) \neq 0, ~i=1,2,$ the following equality holds: 
\begin{equation}\label{eq:dif_roots}
\pi(\{f_1^{\gamma_1}=\ldots=f_{n+1}^{\gamma_1}=0\})\cap\pi(\{f_1^{\gamma_1}=\ldots=f_{n+1}^{\gamma_1}=0\})=\emptyset.
\end{equation}

Indeed, by Assumption \ref{same_proj}, for any $j\in\{1,\ldots,n+1\},$ the projection $\pi$ maps the curve $Y_j=\{f_1^{\gamma_1}=\ldots=\widehat{f_{j}^{\gamma_1}}=\ldots=f_{n+1}^{\gamma_1}=0\}$ (the $j$-th polynomial in the system defining $Y_j$ is omitted) surjectively onto $O_R.$ Therefore, for any $j\in\{1,\ldots,n+1\},$ the set $\pi^{-1}(\mathcal P)\cap Y_j,$ where $\mathcal P=\pi(\{f_1^{\gamma_2}=\ldots=\ldots=f_{n+1}^{\gamma_2}=0\}\subset O_R,$ is finite. 

Since $\gamma_1\neq\gamma_2,$ for some index $j_0\in\{1,\ldots,n+1\},$ the difference $A^{\gamma_1}_{j_0,0}\setminus A^{\gamma_2}_{j_0,0}$ is non-empty. Take any point $a\in A^{\gamma_1}_{j_0,0}\setminus A^{\gamma_2}_{j_0,0}.$ 

Finally, note that since $\MV(A^{\gamma_1}_{1,0},\ldots A^{\gamma_1}_{n+1,0}) \neq 0,$ by Bernstein's criterion for vanishing of the mixed volume, we have $f_{j_0}/x^a\mid_{Y_{j_0}}\neq\mathrm{const}.$ 
The latter implies that for generic $\alpha\in\C,$ we have $f_{j_0}/x^a\mid_{\pi^{-1}(\mathcal P)\cap Y_{j_0}}\neq\alpha.$ Thus, perturbing the coefficient at the monomial $x^a$ in the polynomial $f_{j_0},$ we can always enforce \ref{eq:dif_roots} to hold. 

Let $p$ be a multiple point of intersection of the closure of the curve $\mathcal C$ with the $1$-dimensional orbit $O_R$ of the toric variety $X_{\Sigma}$ corresponding to the ray $R.$ In a small neighborhood of the point $p,$ the curve $C$ is a union of smooth branches intersecting $O_R$ transversally (upon a change of coordinates from the preceding lemma). We will now compute the order of contact between any two of those branches in terms of the support sets $A_1,\ldots, A_{n+1}.$ 

After performing a monomial change of variables from Proposition \ref{vert_change}, one can write the polynomials $f_i,~1\leqslant\ldots i\leqslant n+1,$ in the following form: $$f_i=g_i(x_1\ldots,x_n,y_1)+\sum_{m=1}^{\infty} y_2^m\widetilde{g}_{i,m}(x_1\ldots,x_n,y_1),$$ where $g_i=f_i^{\gamma}$ and $\widetilde{g}_{i,m}$ are polynomials in the variables $x_1\ldots,x_n,y_1.$ The orbit of the toric variety $X_{\Omega}$ corresponding to the tuple $A^{\gamma}_{0}$ is the coordinate hyperplane $\{y_2=0\}.$

Let $p=(0,\ldots,0,u_1,0)$ be a multiple intersection point of the curve $C$ with the $1$-dimensional orbit of the toric variety $X_{\Sigma}$ corresponding to the ray $R.$ Consider a pair of branches of the curve $C$ passing through the point $p.$ The preimages of these two branches intersect the hyperplane $\{y_2=0\}$ at the points $p_1=(v_1,\ldots,v_n,u_1,0), p_2=(v'_1,\ldots,v'_n,u_1,0)$ respectively. Moreover, for every $1\leqslant i\leqslant q,$ we have $g_i(p_1)=g_i(p_2).$ Due to Assumption \ref{indv1} the tuple $(A_1,\ldots,A_{n+1})$ satisfies $\ind_v(A_1,\ldots,A_{n+1})=1$. Therefore there exists a number $\kappa\in\N,$ such that for $1\leqslant i\leqslant n+1,$ we have $\widetilde{g}_{i,\kappa-1}(p_1)=\widetilde{g}_{i,\kappa-1}(p_2)$ and $\widetilde{g}_{i,\kappa}(p_1)\neq\widetilde{g}_{i,\kappa}(p_2).$ We denote this number by $\kappa(p_1,p_2)$ to emphasize its dependence on the points $p_1,p_2.$ 

\begin{proof}[Proof of Lemma \ref{projtangency}]
	Let $\kappa=\kappa(p_1,p_2),$ where $p_1,p_2$ are as described above. Let us first compute the lower bound for the sought order of contact using the following proposition.
	\begin{utver}\label{lowerbound}
		The order of contact between the projections of the branches of $C$ passing through the points $p_1,p_2$ is greater or equal to $\kappa.$ 
	\end{utver} 
	\begin{proof}[Proof of Proposition \ref{lowerbound}]
The points $p_1=(v_1,\ldots,v_n,u_1,0)$ and $p_2=(v'_1,\ldots,v'_n,u_1,0)$ under consideration are related as follows. The lattice $\Z^n\simeq\bigslant{\Z^{n+2}}{\langle h_{n+1},h_{n+2}\rangle}$ and the lattice $\Lambda\subset\Z^n$ affinely generated by the tuple $A^{\gamma}_0$ admit a pair of aligned bases $\Z^n=\bigoplus\Z w_i$ and $\Lambda=\bigoplus\Z a_iw_i,$ for some $a_1,\ldots, a_n\in\Z.$ Therefore, for some integers $a_1,\ldots,a_n,$ we have $v'_j=r_jv_j,$ where $r_j$ is some $(a_j)$-th root of unity. Also, note that rescaling the coordinate system in the same manner, i.e. performing the change of variables $\check{x}_i=r_jx_j,$ does not affect the coefficients of monomials of $f_1,\ldots,f_{n+1}$ that do not distinguish the points $p_1$ and $p_2.$ For $1\leqslant j\leqslant q,$ denote by $F_j$ the part of $f_j$ which is invariant under this rescaling, and by $G_j$ the part that is not. 
		
		Now, let us compute the order of contact at the point $p_1$ between the complete intersection curves $\mathcal C=\{f_1=\ldots=f_{n+1}=0\}$ and $X=\{F_1=\ldots=F_{n+1}=0\}.$ Let the curve $X$ be locally parametrized, i.e., in a neighborhood of the point $p_1,$ the curve $X$ is the image of a parametrization map $s\mapsto\varphi(s)=(\varphi_1(s),\ldots,\varphi_n(s),\varphi_{n+1}(s),\varphi_{n+2}(s)).$ Note also that since the $(n+2)$-th coordinate of $p_1$ is $0,$ the Taylor series of $\varphi_{n+2}$ at $p_1$ has no constant term, therefore it starts with the term of degree at least $1$ in $s.$ 
		
		Substituting $\varphi(s)$ into the system $\{f_1=\ldots f_{n+1}=0\},$ we obtain the following system: $$\{G_1(\varphi(s))=\ldots=G_{n+1}(\varphi(s))=0\}.$$ The way the number $K=K(p_1,p_2)$ was defined implies that the polynomials $G_j$ are of the form $G_j=\sum_{m=K}^{\infty} t^m\widetilde{g}_{i,m}(x_1\ldots,x_n,y_1).$ Thus, for $1\leqslant j\leqslant n+1,$ the leading term in $G_j(\varphi(s))$ is of degree $\geqslant K$ in $s.$ Therefore, the order of contact between $X$ and the curve $V$ at $p_1$ is at least $K.$ The same is true for the order of contact between $X$ and $V$ at $p_2.$ 
		
		Now we note that the change of variables $\check{x}_i=r_jx_j$ maps the branch of the curve $X$ passing through $p_1$ to the one passing through $p_2$ and at the same time does not change the defining polynomials of $X.$ Moreover, since this change of variables does not affect the last two of the $n+2$ coordinates, the projections of the two above-mentioned branches of $X$ coincide in some neighborhood of $p=\pi(p_1)=\pi(p_2).$ Finally, the projection $\pi$ does not decrease the order of contact between curves, which yields the desired inequality. 
	\end{proof}
	Now let us state and prove the following special case of Proposition \ref{projtangency}.
	\begin{utver}\label{transverse}
		In the same notation as above, suppose that $\kappa(p_1,p_2)=1.$ Then for generic $f_1,\ldots,f_{n+1}\in\C^{A_1}\oplus\ldots\oplus \C^{A_{n+1}},$ the projections of the branches of the curve $C$ that pass through $p_1$ and $p_2$ intersect transversally. 
	\end{utver}

	\begin{proof}[Proof of Proposition \ref{transverse}]
	To prove the statement we need to compare the projections of the tangent lines to the curve $V$ at the points $p_1$ and $p_2$ and make sure that for almost all $f_1,\ldots,f_{n+1}\in\C^{A_1}\oplus\cdot\oplus\C^{A_{n+1}},$ they do not coincide. 
	
	To find the tangent lines at $p_1$ and $p_2$ we need to compute the kernel of the differential $n+1$-form $\bigwedge_{i=1}^{q} df_i$ at those points. Moreover, since we need to compare the projections of the tangent lines, the only components of $\bigwedge_{i=1}^{n+1} df_i$ that we have to look at are $dx_1\wedge\ldots\wedge dx_n\wedge dy_1$ and $dx_1\wedge\ldots\wedge dx_n\wedge dy_2.$ In the setting that is being considered, the corresponding coefficients will be the  maximal minors $M_{1},M_{2}$ obtained by keeping the first $n$ columns of the $(n+1)\times(n+2)$-matrix $\mathcal J$ evaluated at the points $p_1$ and $p_2,$ where 
	
	$$\mathcal J=\begin{pmatrix} 
		\frac{\partial g_1}{\partial x_1}&\dots&\frac{\partial g_1}{\partial x_n}&\frac{\partial g_1}{\partial y_1}&\widetilde{g}_{1,1}\\
		\frac{\partial g_2}{\partial x_1}&\dots&\frac{\partial g_2}{\partial x_n}&\frac{\partial g_2}{\partial y_1}&\widetilde{g}_{2,1}\\
		\vdots & \ddots & \vdots & \vdots & \vdots \\
		\frac{\partial g_{q}}{\partial x_1}&\dots&\frac{\partial g_{n+1}}{\partial x_n}&\frac{\partial g_{n+1}}{\partial y_1}&\widetilde{g}_{n+1,1}\\
	\end{pmatrix}.$$
	
Since $K(p_1,p_2)=1,$ at least one of the polynomials $\tilde{g}_{j,1}$ has a monomial which distinguishes the points $p_1,p_2.$ Without loss of generality, suppose that such a monomial occurs in the polynomial $\tilde{g}_{1,1}.$ Moreover, since the curve $V$ intersects the orbit of $X_{\Omega}$ corresponding to the tuple $(A^{\gamma}_{1,0},\ldots,A^{\gamma}_{n+1,0})$ transversally, the minor $M_{1}$ does not vanish at $p_1$ and $p_2.$ 

Therefore, we need to show that for generic $f_1,\ldots,f_q,$ we have 
\begin{equation}\label{jac}
\begin{vmatrix}
			M_{1}(p_1)&M_{2}(p_1)\\
			M_{1}(p_2)&M_{2}(p_2)\\
\end{vmatrix}\neq 0.
	\end{equation}

The condition (\ref{jac}) is clearly algebraic, so, the set of tuples $f_1,\ldots,f_{n+1}\in\C^A$ satisfying it is Zariski open. To make sure it is everywhere dense, we need to show that it is non-empty. Indeed, suppose that for some $f_1,\ldots,f_{n+1}\in\C^A,$ we have  
\begin{equation}\label{tangentcond}
		\begin{vmatrix}
			M_{1}(p_1)&M_{2}(p_1)\\
			M_{1}(p_2)&M_{2}(p_2)\\
		\end{vmatrix}= 0.
\end{equation}
Since the minor $M_{1}$ does not vanish at the points $p_1$ and $p_2,$ the second column of the matrix in (\ref{tangentcond}) is non-zero and equality (\ref{tangentcond}) can be rewritten as follows: 
	\begin{equation}\label{syst}
		\begin{cases}
			M_{1}(p_1)=\lambda M_{2}(p_1)\\
			M_{1}(p_2)=\lambda M_{2}(p_2),
		\end{cases}
	\end{equation}
	for some $\lambda\in\C.$
	The minors $M_{1}$ and $M_{2}$ are given by the following formulas:
	\begin{equation}\label{Minors}
		M_{2}=\begin{vmatrix} 
			\frac{\partial g_1}{\partial x_1}&\dots&\frac{\partial g_1}{\partial x_n}&\widetilde{g}_{1,1}\\
			\frac{\partial g_2}{\partial x_1}&\dots&\frac{\partial g_2}{\partial x_n}&\widetilde{g}_{2,1}\\
			\vdots & \ddots & \vdots & \vdots \\
			\frac{\partial g_{n+1}}{\partial x_1}&\dots&\frac{\partial g_{n+1}}{\partial x_n}&\widetilde{g}_{n+1,1}\\
		\end{vmatrix},
		~M_{1}=\begin{vmatrix} 
			\frac{\partial g_1}{\partial x_1}&\dots&\frac{\partial g_1}{\partial x_n}&\frac{\partial g_1}{\partial y_{k-1}}\\
			\frac{\partial g_2}{\partial x_1}&\dots&\frac{\partial g_2}{\partial x_n}&\frac{\partial g_2}{\partial y_{k-1}}\\
			\vdots & \ddots & \vdots & \vdots \\
			\frac{\partial g_{n+1}}{\partial x_1}&\dots&\frac{\partial g_{n+1}}{\partial x_n}&\frac{\partial g_{n+1}}{\partial y_{1}}\\
		\end{vmatrix}.
	\end{equation}
	The matrices in (\ref{Minors}) are almost identical except for the last column, let us expand their determinants along this column. Then, we obtain:
	\begin{align}\label{determexpand}
		M_{k}=\sum_{j=1}^{n+1}(-1)^{n+1+j}\widetilde{g}_{j,1}D_j=(-1)^{n+2}\widetilde{g}_{1,1}D_1+\sum_{j=2}^{n+1}(-1)^{n+1+j}\widetilde{g}_{j,1}D_j;\\
		M_{k-1}=\sum_{j=1}^{n+1}(-1)^{n+1+j}\frac{\partial g_j}{\partial y_{k-1}}D_j=(-1)^{n+2}\frac{\partial g_1}{\partial y_{k-1}}D_1+\sum_{j=2}^{n+1}(-1)^{n+1+j}\frac{\partial g_j}{\partial y_{k-1}}D_j, 
	\end{align}
	where $D_j$ are the cofactors of the entries in the last columns.

Assumption \ref{same_proj} guarantees that for generic $f_1,\ldots,f_{n+1},$ the cofactor $D_1$ does not vanish at the roots of the truncated system $\{g_1(x_1,\ldots,x_n,y_1)=\ldots=g_{n+1}(x_1,\ldots,x_n,y_1)=0\}.$ Indeed, consider the curve $Y$ given by the polynomials $g_2,\ldots,g_{n+1}$ in the coordinate hyperplane $\{y_2=0\}.$ Assumption \ref{same_proj} together with Theorem 17 in \cite{khov16} implies that $Y$ is irreducible and is not contained in a shifted copy of a subtorus in $\CC^{n+1}.$ Therefore, at a generic point of $Y$ the $\partial_{y_1}$-component of the vector tangent to $Y$ (which equals the cofactor $D_1$) is non-zero. 

Finally, for generic $f_1,\ldots,f_{n+1}$ the intersection points of the hypersurface $\{g_1=0\}$ and the curve $Y$ are generic, which implies that the cofactor $D_1$ does not vanish at the roots of the 
system $\{g_1(x_1,\ldots,x_n,y_1)=\ldots=g_{n+1}(x_1,\ldots,x_n,y_1)=0\}.$ 

 Substituting equations in (\ref{determexpand}) into the system (\ref{syst}), we obtain:
	\begin{equation}\label{syst1}
		\begin{cases}
			(-1)^{n+2}(\widetilde{g}_{1,1}-\lambda\frac{\partial g_1}{\partial y_{1}})D_1\mid_{p_1}=\sum_{j=2}^{n+1}(-1)^{n+j}(\widetilde{g}_{j,1}-\lambda\frac{\partial g_j}{\partial y_{1}})D_j\mid_{p_1}\\
			(-1)^{n+2}(\widetilde{g}_{1,1}-\lambda\frac{\partial g_1}{\partial y_{1}})D_1\mid_{p_2}=\sum_{j=2}^{n+1}(-1)^{n+j}(\widetilde{g}_{j,1}-\lambda\frac{\partial g_j}{\partial y_{1}})D_j\mid_{p_2}.
		\end{cases}
	\end{equation}

The equality $K=K(p_1,p_2)=1$ implies that the polynomial $\widetilde{g}_{1,1}$ has at least one monomial $\alpha\in\supp\widetilde{g}_{1,1}$ such that $\alpha(p_1)\neq\alpha(p_2).$ Changing the coefficient of this monomial in $\widetilde{g}_{1,1}$ (i.e. adding $\varepsilon\cdot\alpha,~\varepsilon\neq 0$ to $\widetilde{g}_{1,1}$) affects the left-hand sides of each of the equalities in (\ref{syst1}). Moreover, the choice of the monomial $\alpha$ guarantees that  $(-1)^{n+2}(\widetilde{g}_{1,1}+\varepsilon\cdot\alpha-\lambda\frac{\partial g_1}{\partial y_{1}})D_1\mid_{p_1}\neq (-1)^{n+2}(\widetilde{g}_{1,1}+\varepsilon\cdot\alpha-\lambda\frac{\partial g_1}{\partial y_{1}})D_1\mid_{p_2}.$ 
The right-hand sides of the equalities in (\ref{syst1}) do not depend on $\widetilde{g}_{1,1}$ and thus remain unchanged. Therefore, the equalities (\ref{syst1}) become no longer true, so the determinant in (\ref{tangentcond}) does not vanish anymore. 
\end{proof}

To deduce the statement of Lemma \ref{projtangency} for $\kappa=\kappa(p_1,p_2)>1$ from Proposition \ref{transverse} together with Proposition \ref{lowerbound}, we can use the following genericity argument, which, however, does not produce an explicit genericity condition on the coefficients of the polynomials $f_1,\ldots,f_{n+1}$ for $\kappa>1$ (in contrast to the explicit notion of 0-nondegeneracy for parameterized sparse curve singularities). 

If there are no other monomials $\alpha\in \supp(\widetilde{g}_{i,m}), i\neq \kappa,$ then the change of variables $\check{t}=t^{\kappa}$ reduces this case to the one considered above. Moreover, the coefficients of the monomials $\alpha\in \supp(\widetilde{g}_{i,m}),~i> \kappa,$ do not change the sought order of contact. 

We still need to deal with the monomials $\alpha\in \supp(\widetilde{g}_{i,m}),~i<\kappa.$ Let us consider the space parametrized by all possible choices of coefficients of the monomials $\alpha\in \supp(\widetilde{g}_{i,m}),~i<\kappa.$ The sought order of contact is  an integer-valued function on this space, which constructively depends on the choice of these coefficients. Its lower bound is equal to $\kappa,$ which follows from Proposition \ref{lowerbound}. Moreover, if all of these coefficients are equal to $0,$ then the desired order of contact is exactly $\kappa.$ By upper semi-continuity of the intersection index the latter implies that for a generic choice of the coefficients of the monomials $\alpha\in \supp(\widetilde{g}_{i,m}),~i<\kappa,$ the order of contact does not exceed $\kappa,$ and thus, is equal to $\kappa.$

Finally, in the notation of subsection \ref{subs:tangency_matrix}, take $\mathcal R$ to be the set of branches of the curve $\widebar V$ intersecting the orbit of the toric variety $X_{\Omega}$ corresponding to the tuple $A^{\gamma}_0.$ One can identify each element $r_i\in \mathcal R$ with the intersection point $p_i$ of the corresponding branch and $X_{\Omega}.$ Observing that the depth of the relation between a pair of elements $r_i,r_j\in \mathcal R$ in the nested boxes construction for the tuple $A^{\gamma}_0$ is exactly the number $\kappa(p_i,p_j)$ concludes the proof of Lemma \ref{projtangency}. 
\end{proof}

\subsection{Mixed fiber polytopes and the Newton polygon of a sparse curve projection}

Let $L\subset\R^{N}$ be a rational codimension $d$ subspace. On each of the two spaces, we introduce the lattice volume form (i.e. the minimal volume form that takes integer values on every lattice polytope in the respective space) and denote the corresponding mixed volume by $\MV$. 

\begin{defin}\label{defmp}
The mixed fiber body $\MP_L{\Delta_0,\ldots,\Delta_{d}}$ of convex bodies $\Delta_i\subset \R^N$ is the unique convex body $\Delta\subset L$ such that for any tuple $B_1,\ldots, B_{N-d-1}$ of convex bodies in $L$ the following equality holds: 
\begin{equation}
\MV(\Delta_0,\ldots,\Delta_{d},B_1,\ldots, B_{N-d-1})=\MV(\Delta, B_1,\ldots, B_{N-d-1}).
\end{equation}
\end{defin}
\begin{exa} If $L=\{0\}\times\R^2\subset\R^3$, then
$$\MP_L[0,a]\times[0,b]\times\{0\},[0,a]\times\{0\}\times[0,c]=[0,ab]\times[0,ac]\subset L.$$
\end{exa}
The following is shown in \cite{EKh} for polytopes and in \cite{mmj} for convex bodies:

\vspace{1ex}

(1) the mixed fiber body exists, is unique up to a shift, and is a lattice polytope whenever $\Delta_i$'s are lattice polytopes;

(2) as a function of $\Delta_i$'s, it is symmetric, multilinear with respect to the Minkowski summation of arguments, and equals $(d+1)\cdot($fiber body of $\Delta)$ whenever $\Delta_0=\cdots=\Delta_{d}=\Delta$;

(3) the aforementioned three properties uniquely define $\MP_L{\Delta_0,\ldots,\Delta_{d}}$ as a function of $\Delta_i$'s (i.e. it may be regarded as a properly scaled polarization of the fiber body functional);

(4) For generic Laurent polynomials $f_0,\ldots,f_d$ of variables $x_1,\ldots,x_N$ with the Newton polytopes $\Delta_0,\ldots,\Delta_d$, consider the image of the complete intersection $$V:=\{f_0=\cdots=f_d=0\}$$ under the coordinate projection to $$L:=\{x_1=\cdots=x_d=0\}$$ as a Cartier divisor $D$ (with the multiplicities reflecting the degree of the ramified covering $V\to D$). Then the equation of $D$ has the Newton polytope $\MP_L{\Delta_0,\ldots,\Delta_{d}}$.

\vspace{1ex}

In our setting, $N=n+k, d=q-1=n+k-2$ and $L$ is a coordinate plane in $\R^{n+k}.$ The Newton polygon of the curve $C,$ which is the image of a complete intersection curve $V$ given by a generic tuple of polynomials $f_1,\ldots, f_q$ with $\supp(f_i)=A_i$ is equal to the mixed fiber polygon $\MP_L(\conv(A_1),\ldots,\conv(A_q)).$

Proposition \ref{fib_area} below, which is a special case of Proposition 3.40 in \cite{E18}, gives an explicit formula for the area of the mixed fiber polygon $\MP_L(\conv(A_1),\ldots,\conv(A_q)).$ To formulate it, we need to introduce a bit of notation. 

Let $J_1$ and $J_2$ be the two embeddings $\Z^{n+2}\to\Z^{n+2}\oplus\Z^{n+2}$ mapping to the two direct summands, and $j_1$ and $j_2$ be their compositions with the quotient map $$\Z^{n+2}\oplus\Z^{n+2}\to(\Z^{n+2}\oplus\Z^{n+2})/(J_1+J_2)(\Z^2)\simeq\Z^{2(n+1)}.$$

\begin{utver}\label{fib_area}
In the same notation as above, the area of the Newton polygon of the curve $C$ can be computed via the following formula: 
\begin{equation}
\vol(\MP_L(A_1,\ldots,A_q))=\MV(j_1A_1,\ldots,j_1A_{q},j_2A_1,\ldots,j_2A_{q}). 
\end{equation}
\end{utver}

\begin{exa}\label{exa:MFP}
Let us compute the Newton polygon of the sparse curve $C$ supported at the sets $(B_1,B_2)$ from Example \ref{exa0} (see Figure \ref{fig:MFP1}). We denote $h_i=\max(B_i).$ 
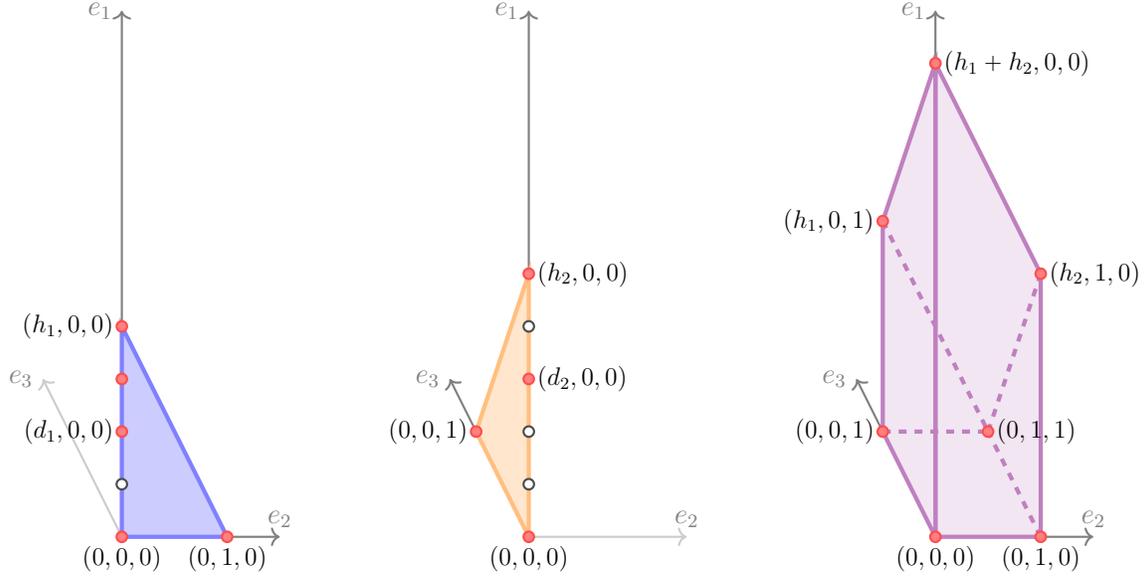
\begin{figure}[ht]
\begin{center}	
		\begin{tikzpicture}[scale=0.7]
			\node[left,gray, scale=0.9] at (0,10) {$e_1$};
			\node[above,gray, scale=0.9] at (3,0) {$e_2$};
			\node[left,gray, scale=0.9] at (-1.5,3) {$e_3$};
			\node[below, scale=0.8] at (0,0) {$(0,0,0)$};
			\node[below, scale=0.8] at (2,0) {$(0,1,0)$};
			\node[left, scale=0.8] at (0,4) {$(h_1,0,0)$};
			\draw[->, thick, gray] (0,0)--(3,0);
			\draw[->, thick, gray] (0,0)--(0,10);
			\draw[->, thick, gray!40] (0,0)--(-1.5,3);
			\node[left, scale=0.8] at (0,2) {$(d_1,0,0)$};
			\filldraw[color=blue!50, fill=blue!20, ultra thick] (0,0)--(2,0)--(0,4)--(0,0);
			\filldraw[color=red!70, fill=red!50, thick](0,0) circle (0.1);
			\filldraw[color=red!70, fill=red!50, thick](2,0) circle (0.1);
			\filldraw[color=red!70, fill=red!50, thick](0,2) circle (0.1);
			\filldraw[color=red!70, fill=red!50, thick](0,3) circle (0.1);
			\filldraw[color=red!70, fill=red!50, thick](0,4) circle (0.1);
			\filldraw[color=black!70, fill=white, thick](0,1) circle (0.1);
		\end{tikzpicture}
		\begin{tikzpicture}[scale=0.7]
			\node[left,gray, scale=0.9] at (0,10) {$e_1$};
			\node[above,gray, scale=0.9] at (3,0) {$e_2$};
			\node[left,gray, scale=0.9] at (-1.5,3) {$e_3$};
			\node[below, scale=0.8] at (0,0) {$(0,0,0)$};
			\node[left, scale=0.8] at (-1,2) {$(0,0,1)$};
			\node[right, scale=0.8] at (0,5) {$(h_2,0,0)$};
			\node[right, scale=0.8] at (0,3) {$(d_2,0,0)$};
			\filldraw[color=white, fill=white, thick](-4,0) circle (0.1);
			\draw[->, thick, gray!40] (0,0)--(3,0);
			\draw[->, thick, gray] (0,0)--(0,10);
			\draw[->, thick, gray] (0,0)--(-1.5,3);
			\filldraw[color=orange!50, fill=orange!20, ultra thick] (0,0)--(-1,2)--(0,5)--(0,0);
			\filldraw[color=red!70, fill=red!50, thick](0,0) circle (0.1);
			\filldraw[color=red!70, fill=red!50, thick](0,5) circle (0.1);
			\filldraw[color=red!70, fill=red!50, thick](-1,2) circle (0.1);
			\filldraw[color=black!70, fill=white, thick](0,2) circle (0.1);
			\filldraw[color=red!70, fill=red!50, thick](0,3) circle (0.1);
			\filldraw[color=black!70, fill=white, thick](0,4) circle (0.1);
			\filldraw[color=black!70, fill=white, thick](0,1) circle (0.1);
		\end{tikzpicture}
		\begin{tikzpicture}[scale=0.7]
			\node[left,gray, scale=0.9] at (0,10) {$e_1$};
			\node[above,gray, scale=0.9] at (3,0) {$e_2$};
			\node[left,gray, scale=0.9] at (-1.5,3) {$e_3$};
			\node[below, scale=0.8] at (0,0) {$(0,0,0)$};
			\node[left, scale=0.8] at (-1,2) {$(0,0,1)$};
			\node[below, scale=0.8] at (2,0) {$(0,1,0)$};
			\node[right, scale=0.8] at (0,9) {$(h_1+h_2,0,0)$};
			\node[left, scale=0.8] at (-1,6) {$(h_1,0,1)$};
			\node[right, scale=0.8] at (2,5) {$(h_2,1,0)$};
			\filldraw[color=white, fill=white, thick](-4,0) circle (0.1);
			\draw[->, thick, gray] (0,0)--(3,0);
			\draw[->, thick, gray] (0,0)--(0,10);
			\draw[->, thick, gray] (0,0)--(-1.5,3);
			\filldraw[color=violet!50, fill=violet!10, ultra thick] (0,0)--(2,0)--(2,5)--(0,9)--(0,0);
			\filldraw[color=violet!50, fill=violet!10, ultra thick] (0,0)--(-1,2)--(-1,6)--(0,9)--(0,0);
			\draw[color=violet!50,ultra thick, dashed] (-1,2)--(1,2);
			\draw[color=violet!50,ultra thick, dashed] (-1,6)--(1,2);
			\draw[color=violet!50,ultra thick, dashed] (2,0)--(1,2);
			\draw[color=violet!50,ultra thick, dashed] (2,5)--(1,2);
			\filldraw[color=red!70, fill=red!50, thick](0,0) circle (0.1);
			\filldraw[color=red!70, fill=red!50, thick](0,9) circle (0.1);
			\filldraw[color=red!70, fill=red!50, thick](-1,2) circle (0.1);
			\filldraw[color=red!70, fill=red!50, thick](-1,6) circle (0.1);
			\filldraw[color=red!70, fill=red!50, thick](2,5) circle (0.1);
			\filldraw[color=red!70, fill=red!50, thick](1,2) circle (0.1);
			\filldraw[color=red!70, fill=red!50, thick](2,0) circle (0.1);
			\node[right, scale=0.8] at (1,2) {$(0,1,1)$};
		\end{tikzpicture}
	\end{center}
	\caption{The polygons $\conv(A_1)$, $\conv(A_2)$ and their Minkowski sum.}\label{fig:MFP1}
\end{figure}
The fiber polygons $I_1, I_2, P$ of $\conv(A_1),\conv(A_2)$ and $\conv(A_1)+\conv(A_2)$ respectively are shown in Figure \ref{fig:MFP2} below. The sought Newton polygon is the mixed fiber polygon of $\conv(A_1)$ and $\conv(A_2),$ and the latter (by its definition) equals $\frac{P-I_1-I_2}{2}.$ The result (as shown in Figure \ref{fig:MFP2}) is the triangle with the vertices $(0,0),(0,h_1)$ and $(h_2,0).$
\begin{figure}[ht]
\begin{center}	
\begin{tikzpicture}[scale=0.4]
\draw[->, thick, gray] (0,0)--(6,0);
\draw[->, thick, gray] (0,0)--(0,6);
\node[above,gray, scale=0.9] at (0,6) {$e_3$};
\node[above,gray, scale=0.9] at (6,0) {$e_2$};
\draw[ultra thick, blue] (0,0)--(4,0);
\node[below, scale=0.65] at (4,0) {$(h_1,0)$};
\node[below, scale=0.65] at (0,0) {$(0,0)$};
\filldraw[color=red!70, fill=red!50, thick](0,0) circle (0.15);
\filldraw[color=red!70, fill=red!50, thick](4,0) circle (0.15);
\node[above, blue,scale=0.9] at (2,0) {$I_1$};
\end{tikzpicture}
\begin{tikzpicture}[scale=0.4]
\draw[->, thick, gray] (0,0)--(6,0);
\draw[->, thick, gray] (0,0)--(0,6);
\node[above, gray, scale=0.9] at (0,6) {$e_3$};
\node[above, gray, scale=0.9] at (6,0) {$e_2$};
\draw[ultra thick, orange] (0,0)--(0,5);
\node[left, scale=0.65] at (0,5) {$(0,h_2)$};
\node[below, scale=0.65] at (0,0) {$(0,0)$};
\filldraw[color=red!70, fill=red!50, thick](0,0) circle (0.15);
\filldraw[color=red!70, fill=red!50, thick](0,5) circle (0.15);
\node[right, orange ,scale=0.9] at (0,2.5) {$I_2$};
\end{tikzpicture}
\begin{tikzpicture}[scale=0.3]
\draw[->, thick, gray] (0,0)--(18.5,0);
\draw[->, thick, gray] (0,0)--(0,15);
\node[above,gray, scale=0.9] at (0,15) {$e_3$};
\node[above,gray, scale=0.9] at (18,0) {$e_2$};
\draw[ultra thick,violet] (0,0)--(0,13)--(4,13)--(14,5)--(14,0)--(0,0);
\pattern[pattern=vertical lines, pattern color=orange] (0,8)--(0,13)--(4,13)--(14,5)--(14,0)--(4,8)--(0,8);
\pattern[pattern=horizontal lines, pattern color=blue] (0,8)--(4,8)--(14,0)--(10,0)--(0,8);
\draw[ultra thick,blue!80] (0,8)--(10,0);
\draw[ultra thick,orange!80] (0,8)--(4,8)--(14,0);
\draw[ultra thick,violet!80] (0,0)--(0,13)--(4,13)--(14,5)--(14,0)--(0,0);
\node[left, scale=0.65] at (0,8) {$(0,2h_1)$};
\node[below, scale=0.65] at (10,0) {$(2h_2,0)$};
\node[below right, scale=0.65] at (13,0) {$(2h_2+h_1,0)$};
\node[right, scale=0.65] at (14,5) {$(2h_2+h_1,h_2)$};
\node[below, scale=0.65] at (0,0) {$(0,0)$};
\node[above, scale=0.65] at (4,13) {$(h_1,2h_1+h_2)$};
\node[left, scale=0.65] at (0,13) {$(0,2h_1+h_2)$};
\filldraw[color=red!70, fill=red!50, thick](0,0) circle (0.2);
\filldraw[color=red!70, fill=red!50, thick](10,0) circle (0.2);
\filldraw[color=red!70, fill=red!50, thick](0,8) circle (0.2);
\filldraw[color=red!70, fill=red!50, thick](0,13) circle (0.2);
\filldraw[color=red!70, fill=red!50, thick](14,0) circle (0.2);
\filldraw[color=red!70, fill=red!50, thick](14,5) circle (0.2);
\filldraw[color=red!70, fill=red!50, thick](4,13) circle (0.2);
\node[right, violet ,scale=0.9] at (8.5,10.5) {$P$};
\end{tikzpicture}
\end{center}
\caption{The fiber polytopes $I_1,I_2,P$.}\label{fig:MFP2}
\end{figure}
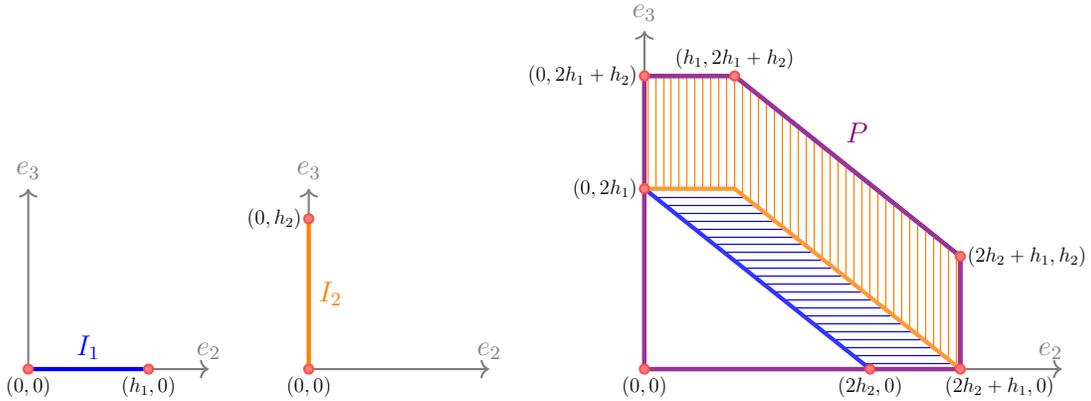
\end{exa}

\subsection{The proof of Lemma \ref{labstr} and Theorem \ref{thsum}}

\begin{proof}[Proof of Lemma \ref{labstr}]
To prove Part 1, we compactify the torus $T\supset C$ with the toric variety $X$ associated to the Newton polygon $N$ of the curve $C$. Then we choose a generic Laurent polynomial $f:T\to\C$ close to the defining equation of $C$ and having the same Newton polygon $N$. 

The closure $\bar Y$ of the curve $Y=\{f=0\}$ is a smooth curve in $X$ approximating the closure $\bar C$. In order to compare the Euler characteristics of $Y$ and $C$, we observe that it is the same in a small metric neighborhood of every point of $\bar C$, except for the following finite sets of points: $S:=\sing C$ and $S_\infty:=\bar C\setminus T$. In a small metric neighborhood $U_s$ of each of the latter points $s$, we have:
$$e(Y\cap U_s)-e(C\cap U_s)=e(\mbox{Milnor fiber of }C\mbox{ at }s)-1=-\mu(s)\mbox{ if } s\in S,$$

\vspace{0.5ex}

\noindent where $\mu(s)$ stands for the Milnor number of $C$ at $s$, and
$$\sum_{s\in (R-\mbox{\scriptsize orbit of }X)}e(Y\cap U_s)-e(C\cap U_s)=e((\mbox{Milnor fiber of } \bar C\mbox{ at }s)\cap T)=-\sum_{i,j}g^R_{i,j}\mbox{ if } s\in S_\infty,$$
where the latter equality follows from Proposition \ref{deltamulti}.

Summing up these equalities, we conclude that $$e(Y)-e(C)=-\sum_{s\in S}\mu(s)-\sum_{R,i,j}g^R_{i,j},$$
where $R$ runs over all exterior normal rays of the Newton polygon $N$. By the BKK formula $e(Y)=-($lattice area of $N)$, we arrive at the first part of Lemma \ref{labstr}.

Then, denoting by $b(s)$ the number of branches of $C$ at $s\in S$, another Euler characteristics count
$$e(C)=e(n(C))-\sum_{s\in \mathcal S}(b(s)-1)$$
and the Milnor-Jung expression $\frac{\mu(s)+b(s)-1}2$ for the $\delta$-nvariant of $C$ at $s$ lead us from Part 1 to Part 2 of Lemma \ref{labstr}.

\end{proof}

\begin{proof}[Proof of Theorem \ref{thsum}]
The statement follows from Part $2$ of Lemma \ref{labstr}. The Newton polygon of $C$ is the fiber polygon of $\conv(A_1),\ldots,\conv(A_{n+1})$ with respect to the coordinate projection $\CC^{n+2}\twoheadrightarrow\CC^2.$ An explicit formula for its area is given in Proposition \ref{fib_area} and equals the first summand of the desired expression. In our setting, the normalization $n(C)$ is the complete intersection curve $V$ together with the intersection points of its compactification $\widebar{V}$ and the orbits of the toric variety $X_{\Omega}$ corresponding to the horizontal facets of the polytope $\conv(A_1)+\ldots+\conv(A_{n+1}).$ The Euler characteristic of $V$ can be computed via the Bernstein-Kouchnirenko-Khovanskii formula, and the result yields the second summand. The contribution of the points added to $V$ is computed via the Bernstein-Kouchnirenko formula and yields the third summand. Finally, by Lemma \ref{projtangency} the the tangency matrices $g^R_{i,j}$ coincide with the combinatorial tangency matrices $g^{\delta}_{i,j}$ defined in subsection \ref{subs:tangency_matrix}, which concludes the proof of the theorem. 
\end{proof} 

\section{Singular loci of resultants, and generalized Vandermonde matrices}\label{sschur}

In this section, we prove a significant part of Theorem \ref{th0res}.

\begin{utver}\label{propresord} 1) In the setting of Theorem \ref{th0res}, the singular locus of the resultant $R_B$ is the union of the irreducible sets $T_0,T_1,T_2,S_0,S_2,S_3,\ldots,S_\infty$ defined (when applicable) by the formulas $(1-4)$ therein, and one more algebraic set of codimension 2, denoted by $S_1$.

2) There is a codimension 3 algebraic subset $\Sigma\subset\C^B$, such that, in a small neighborhood of every point of the strata $\tilde S_i:=S_i\setminus\Sigma,\, i=1,2,3,\ldots$, and $\tilde T_j:=T_j\setminus\Sigma,\, j=0,1,2$, the resultant $R_B$ is the union of several smooth pairwise transversal hypersurfaces, whose number for each stratum is specified in Table 1. (In particular, these strata are smooth and really contained in $\sing R_B$.)
\end{utver}

\subsection{Stratification on the pairs of polynomials}

For the beginning, we will narrow down our study to
$$\C^{B_1\x} \x \C^{B_2\x}=\left\{ (f_1,f_2) \in \C^{B_1} \x \C^{B_2} \,|\, f_1 \not\equiv 0 \text{ and } f_2 \not\equiv 0 \right\},$$
and then consider the subsets $S \in \C^{B_1\x} \x \C^{B_2\x}$ along with their closures $\overline S \in \C^{B_1} \x \C^{B_2}$.

\begin{defin}
Let $f_1(x) \in \C^{B_1\x}$ and $f_2(c) \in \C^{B_2\x}$ be two non-zero (Laurent) polynomials, and $x \in \CP^1$ be a point of the complex line. The multiplicity $\ord_x(f_i)$ of $f_i$ at $x$ is
\begin{enumerate}
    \item The usual multiplicity of a polynomial $f_i$ at $x$, if $x \neq 0, \infty$.
    \item The minimal integer $n \geqslant 0$ such that the $((\min B_i)+n)$-th coefficient of $f_i$ is non-zero, if $x=0$.
    \item The minimal integer $n \geqslant 0$ such that the $((\max B_i)-n)$-th coefficient of $f_i$ is non-zero, if $x=\infty$.
\end{enumerate}
The multiplicity $\ord_x(f_1, f_2)$ of a pair $(f_1, f_2)$ at $x$ is the minimum of $\ord_x(f_1)$ and $\ord_x(f_2)$.
\end{defin}

\begin{defin}\label{defN}
A symmetric filtration subset
$$N_{j_0}^{j_\infty}(j_1,\ldots,j_k),\quad j_0, j_\infty \geqslant 0, \quad j_1,\ldots,j_k \geqslant 1, \quad k \geqslant 0$$
consists of $(f_1,f_2) \in \C^{B_1\x} \times \C^{B_2\x}$ such that
\begin{enumerate}
    \item $\ord_0(f_1, f_2) \geqslant j_0$ and $\ord_\infty(f_1, f_2) \geqslant j_\infty$;
    \item $f_1=f_2=0$ has at least $k$ distinct solutions $x_1,\ldots,x_k$ in $\C^\times$;
    \item at the $m$-th solution $x_m$ holds $\ord_{x_m}(f_1, f_2) \geqslant j_m$.
\end{enumerate}
\end{defin}

\begin{lemma}[see Cor. 2.14 of \cite{S21}]\label{lemcortranstype}
If $\langle B_1,B_2\rangle=1$, then the intersection of the singular locus $\sing R_B$ with $\C^{B_1\x}\x\C^{B_2\x}$ is
$$N(2) \cup N^2_0 \cup N^0_2 \cup N(1,1) \cup N^1_0(1) \cup N^0_1(1).$$
\end{lemma}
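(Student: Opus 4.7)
The plan is to parametrise $R_B\cap(\C^{B_1\x}\times\C^{B_2\x})$ by the incidence variety $Z:=\{(f_1,f_2,x)\in \C^B\times\C^\x : f_1(x)=f_2(x)=0\}$ and analyse its singular locus via the geometry of this parametrisation. A Jacobian computation (using $\partial_{c_{i,b}}f_i(x)=x^b$ with $x\ne 0$) shows that $Z$ is smooth of codimension $2$, hence equidimensional with $R_B$. The projection $\pi\colon Z\to \C^B$ is generically one-to-one onto its image, and the closure $\overline{\pi(Z)}$ inside $\C^{B_1\x}\times\C^{B_2\x}$ picks up ``boundary'' pairs for which a common root has degenerated to $0$ or $\infty$, forcing some extreme coefficients of $f_i$ to vanish.

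For the \emph{interior singularities}, at $(f_1,f_2,x)\in Z$ the kernel of $d\pi$ equals $\{(0,0,v) : f_1'(x)v=f_2'(x)v=0\}$, so $\pi$ is an immersion at $(f_1,f_2,x)$ unless $x$ is a common double root, i.e.\ $(f_1,f_2)\in N(2)$. Since $Z$ is smooth and equidimensional with $R_B$, the image $\overline{\pi(Z)}$ is smooth at $(f_1,f_2)$ precisely when $\pi^{-1}(f_1,f_2)$ is a single point and $\pi$ is an immersion there. Failure of uniqueness is the defining condition of $N(1,1)$ and failure of immersivity is the defining condition of $N(2)$, so the purely interior contribution to the singular locus is exactly $N(2)\cup N(1,1)$.

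For the \emph{boundary singularities}, any point of $R_B\cap(\C^{B_1\x}\times\C^{B_2\x})\setminus \pi(Z)$ arises as a limit $(f_1^t,f_2^t)\to(f_1,f_2)$ whose common root $x_t$ tends to $0$ or $\infty$. Passing to the limit in $f_i^t(x_t)=0$ and dividing by the leading power of $x_t$ forces the lowest (resp.\ highest) coefficient of both $f_i$ to vanish, i.e.\ $(f_1,f_2)\in N^0_1\cup N^1_0$; an explicit deformation establishes the converse inclusion. Shifting $B_i$ by the amount of leading vanishing reduces the local structure at a generic point of $N^0_1\setminus(N^0_2\cup N^1_0(1))$ to the smooth locus of a resultant with strictly smaller support, so such points are smooth in $R_B$. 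Singularities on the boundary therefore persist only where the leading vanishing has order $\ge 2$ (yielding $N^2_0$ and $N^0_2$, where the boundary branch itself acquires a higher-order degeneracy) or where it coexists with an interior common root (yielding $N^1_0(1)$ and $N^0_1(1)$, where the boundary branch and the interior branch $\overline{\pi(Z)}$ meet as in the classical double-point picture).

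The main obstacle is exactly this last case analysis: certifying that only these four additional strata appear on the boundary and that no extra singular components creep in. This is where the hypothesis $\langle B_1,B_2\rangle=1$ is essential, since otherwise the substitution $x\mapsto x^m$ would identify ostensibly distinct strata and produce additional singular components from the resulting $\Z/m\Z$ symmetry acting on the common-root set.
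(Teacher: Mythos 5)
Your incidence-variety strategy is natural, and the interior analysis ($N(1,1)$ from failure of injectivity of $\pi$, $N(2)$ from failure of injectivity of $d\pi$) is sound, but the boundary part of the argument has two genuine gaps. First, your case enumeration is incomplete: you allow a boundary branch to degenerate ($N^2_0$, $N^0_2$) or to meet an interior branch ($N^1_0(1)$, $N^0_1(1)$), but you never consider two boundary branches meeting \emph{each other}. A pair $(f_1,f_2)$ with $\ord_0(f_1,f_2)=\ord_\infty(f_1,f_2)=1$ and no common root in $\C^\times$ acquires one local branch of $R_B$ from common roots escaping to $0$ and another from common roots escaping to $\infty$; it is therefore singular, yet it lies in none of your cases and none of the six sets in the statement. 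For $B_1=B_2=\{0,1,2\}$ the pair $(b_1x,\,b_2x)$ with $b_1b_2\ne 0$ is an explicit such point: the resultant is $(c_1a_2-a_1c_2)^2-(a_1b_2-b_1a_2)(b_1c_2-c_1b_2)$, and all six partial derivatives vanish at $a_1=a_2=c_1=c_2=0$. Any proof of the claimed set equality must say something about this stratum $N^1_1$; it has codimension $4$, so its omission does not disturb the paper's downstream identification of codimension-$2$ components, but the flat set equality you are trying to prove is sensitive to it.

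Second, the step ``shifting $B_i$ by the amount of leading vanishing reduces the local structure $\ldots$ to the smooth locus of a resultant with strictly smaller support'' is not a real reduction and is left entirely unjustified; the local branch of $R_B$ near a generic point of $N^0_1$ is not a resultant for a shifted support in any direct sense. The clean argument — and the one the paper itself invokes in Section~\ref{ssproofs} via Lemma~\ref{lemtransvers} — is to extend your $Z$ across $x=0,\infty$ to its closure $\widetilde R_B\subset\C^{B_1}\times\C^{B_2}\times\CP^1$. Near $x=0$ this is cut out by the normalised equations $f_i(x)/x^{\min B_i}=0$, which exhibit $\widetilde R_B$ as smooth of codimension~$2$ along $\{x=0\}$; the very same kernel-of-$d\pi$ computation you carried out for interior points then shows the projection fails to be an immersion at $(f_1,f_2,0)$ precisely when $\ord_0(f_1,f_2)\geqslant 2$, i.e.\ on $N^0_2$, with the analogous statement at $x=\infty$. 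Working with $\widetilde R_B$ uniformly over $\CP^1$ is what makes the boundary case analysis rigorous and complete; it is also where the missed $N^1_1$ case would have been impossible to overlook, since both $(f_1,f_2,0)$ and $(f_1,f_2,\infty)$ appear as distinct preimages in $\widetilde R_B$.
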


\begin{lemma}\label{corcompoutN11}
In the setting of Theorem \ref{th0res}, the irreducible components of the singular locus $\sing R_B$ are following:
\begin{enumerate}
    \item The irreducible components of $\overline{N(1,1)}$;
    \item $S_0$, $S_\infty$, $T_1$ and $T_2$, if applicable.
\end{enumerate}
\end{lemma}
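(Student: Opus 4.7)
The plan is to bootstrap from Lemma \ref{lemcortranstype}, which already lists the strata of $\sing R_B$ over the open locus $\C^{B_1\x}\x\C^{B_2\x}$, and then (i) show that three of the six strata are absorbed into $\overline{N(1,1)}$, (ii) identify the two remaining ones with $S_0$ and $S_\infty$ under the applicability hypothesis, and (iii) recover $T_1,T_2$ from the boundary $\{f_1=0\}\cup\{f_2=0\}$ which was excluded in Lemma \ref{lemcortranstype}.

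For step (i), I would argue by explicit one-parameter perturbations. The inclusion $N(2)\subset \overline{N(1,1)}$ is obtained by splitting a common double root in $\C^\x$ into two simple ones while keeping the pair in the resultant. For $N^1_0(1)\subset\overline{N(1,1)}$ (and symmetrically $N^0_1(1)$), given a pair with a common root at $0$ and one in $\C^\x$, I would add a small coefficient $\epsilon_1$ at $x^{d_1}$ in $f_1$ and $\epsilon_2$ at $x^{d_2}$ in $f_2$; each acquires one new root near $0$, and a single linear relation between $\epsilon_1,\epsilon_2$ makes them coincide, producing a pair in $N(1,1)$ arbitrarily close to the starting one.

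For step (ii), the dichotomy of Theorem \ref{th0res}(2) enters. If $(d_i+1)\notin B_i$ for both $i$, then $\ord_0(f_i)\geqslant 2$ is already equivalent to $\ord_0(f_i)\geqslant 1$, so $\overline{N^2_0}=S_0$ has codimension $2$; a parameter count shows that starting from a generic $(f_1,f_2)\in S_0$, the only available perturbations in $x^{d_1},x^{d_2}$ create at most one matching root near $0$, so $S_0\not\subset\overline{N(1,1)}$ and $S_0$ is a genuinely new component (its transversal type is the sparse singularity of Theorem \ref{thsparse}, which is not an ordinary multi-point). If instead $(d_i+1)\in B_i$ for some $i$, then $N^2_0$ is cut out by strictly more equations, and the extra perturbations at $x^{d_i+1}$ give enough parameters to simultaneously match \emph{two} near-$0$ roots of $f_1$ and $f_2$, yielding $N^2_0\subset\overline{N(1,1)}$; hence $\overline{N^2_0}$ contributes no new component, consistent with $S_0$ being excluded. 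The set $N^0_2$ is handled symmetrically. For step (iii), the locus $\{f_i=0\}$ has codimension $|B_i|$, so it is a codimension-$2$ component $T_i\subset\sing R_B$ exactly when $|B_i|=2$ (and is absorbed into higher-codimension strata otherwise).

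I expect the main obstacle to be step (ii): the perturbation/parameter-count must be executed with enough care to show the dichotomy really does match the "$(d_i+1)\in B_i$" exceptional clause verbatim, and to verify that in the absorbing case one actually lands in a single irreducible component of $\overline{N(1,1)}$ rather than crossing several. A clean way to package this is to work with the normalization of the universal common-root locus over $\C^B$ and track which branches of the resultant meet at a point of $\overline{N^2_0}$; this also makes it manifest that the transversal type at a generic point of $S_0$ (when applicable) differs from an ordinary multi-point and so $S_0$ cannot coincide with any component of $\overline{N(1,1)}$.
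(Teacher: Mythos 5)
Your proposal takes a genuinely different route from the paper's. The paper's proof is a pure codimension count that leans on Theorem~\ref{theor1main2} (cited explicitly in the paper as the key input), which says that every irreducible component of $\sing R_B$ has codimension exactly~2. With that purity result in hand, one simply observes that any irreducible component intersecting $\C^{B_1\x}\x\C^{B_2\x}$ densely lies, by Lemma~\ref{lemcortranstype} and irreducibility, inside the closure of one of the six filtration strata; the strata $N(2)$, $N^1_0(1)$, $N^0_1(1)$ are discarded because they have codimension $\geqslant 3$, $\overline{N^2_0}$ and $\overline{N^0_2}$ contribute exactly when the clause $(1+\min B_i)\in B_i$ fails (this is when they are codimension 2), and the components not meeting the open torus are the codimension-$|B_i|$ boundary planes, relevant exactly when $|B_i|=2$. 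Nothing is said, or needs to be said, about where the codimension-3 strata end up.

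Your approach replaces the purity argument with explicit containments $N(2),N^1_0(1),N^0_1(1)\subset\overline{N(1,1)}$ (and, in the exceptional subcase, $N^2_0\subset\overline{N(1,1)}$), which if true would make the lemma self-contained. But these containments are not established by the perturbations you sketch, and they are genuinely delicate in the sparse setting. For $N^1_0(1)\subset\overline{N(1,1)}$, adding $\varepsilon_i$ to a single coefficient of $f_i$ to create a new common root near $0$ destroys the pre-existing common root at $x_1\in\C^\x$; you would need to readjust several coefficients simultaneously, and the available perturbations are constrained to $\C^{B_i}$, so a careful parameter count is required rather than a one-line relation between $\varepsilon_1$ and $\varepsilon_2$. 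For $N^2_0\subset\overline{N(1,1)}$ in the exceptional case, the count is even subtler: the new small roots of the perturbed $f_2$ are near $d_2$-th roots of a single quantity, so the created common roots are forced to differ by roots of unity, placing the approximating pair inside a specific $S^i_j$-type component; this is believable but not at all automatic. You also invoke the fact that the transversal type at $S_0$ is the sparse singularity to conclude $S_0\not\subset\overline{N(1,1)}$, but at the stage of this lemma the transversal types have not been established (they are Corollary~\ref{slwhitn}, which depends on this lemma), so that step is circular; fortunately the lemma statement does not actually require $S_0$ to be distinct from components of $\overline{N(1,1)}$ --- that separation is Proposition~\ref{propdoublestrara}, proved later. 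In short: your plan could be made to work, but as written the perturbation arguments are the gap, and the paper sidesteps the whole issue by citing the purity theorem.
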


To prove it, we will use the following theorem:

\begin{theor}[see Th. 1.2 (ii) of \cite{S21}]\label{theor1main2}
If $\langle B_1,B_2\rangle=1$ and not $L(B_1)=L(B_2)=1$, then the singular locus $\sing R_B$ consists of several irreducible components of codimension 2.
\end{theor}

\begin{proof}
First, consider the irreducible components of $\sing R_B$ whose intersection with $\C^{B_1\x}\x\C^{B_2\x}$ has codimension 2.
The codimension of (every irreducible component) of $N(2)$, $N^1_0(1)$ and $N^0_1(1)$ is at least 3, so they do not contribute to the irreducible components of $\sing R_B$. Unless $(1+\min B_i)\in B_i$ for $i=1$ or 2, the subset $N^2_0$ has codimension 2, thus $\overline {N^2_0}$ is an irreducible component of $\sing R_B$, namely $S_0$. Similarly, unless $(\max B_i-1)\in B_i$ for $i=1$ or 2, the set $\overline {N^0_2}=S_\infty$ is an irreducible component of $\sing R_B$.

Furthermore, the complement of $\C^{B_1\x}\x\C^{B_2\x}$ consists of two vector spaces of codimensions $|B_1|$ and $|B_2|$ respectively. If $|B_1|=2$, then $\{0\}\x\C^{B_2}$ is an irreducible component of $\sing R_B$, namely $T_1$. Similarly, if $|B_2|=2$, then $\C^{B_1}\x\{0\}=T_2$ is an irreducible component of $\sing R_B$. Otherwise, the codimension of the complement is at least 3, so it does not contribute to the irreducible components of $\sing R_B$.
\end{proof}

\subsection{Solution space and its stratification}

From now on we consider only the case $j_0=j_\infty=0$ and study the filtration subsets only of the form $N(j_1,\ldots,j_k)$. Consider the solution space
$$Z_k=\left\{ (x_1, \ldots, x_k) \subset (\C^\times)^k \,|\, x_i \neq x_j \; \text{for} \; i \neq j \right\}$$
of tuples of different non-zero numbers. We would like to define a stratification on it that is related with the structure of $N(j_1,\ldots,j_k)$.

\begin{defin}\label{defmatrsimple}
First let us for each tuple $x=(x_1,\ldots,x_k)$ define a pair of polynomials
$$h^1_x(t)=\prod_{m=1}^k (t-x_m)^{j_m} \quad \text{ and } \quad h^2_x(t)=\prod_{m=1}^k (t-x_m)^{j_m}.$$

Then consider a pair of maps
$$\psi^1_x: \C^{B_1} \to \C[t]/(h^1_x(t)) \quad \text{ and } \quad \psi^2_x: \C^{B_2} \to \C[t]/(h^2_x(t))$$
given by
$$f_1(t) \mapsto f_1(t) \mod h^1_x(t) \quad \text{ and } \quad f_2(t) \mapsto f_2(t) \mod h^2_x(t).$$

We can now define the strata
$$S^1_{n_1} = \left\{ (x_1, \ldots, x_k) \subset Z_k \,|\, \codim \mathop{\rm Im} \psi^1_x = n_1 \right\},$$
$$S^2_{n_2} = \left\{ (x_1, \ldots, x_k) \subset Z_k \,|\, \codim \mathop{\rm Im} \psi^2_x = n_2 \right\},$$
$$S_{n_1, n_2} = S^1_{n_1} \cap S^2_{n_2} = \left\{ (x_1, \ldots, x_k) \subset Z_k \,|\, \codim \mathop{\rm Im} \psi^1_x = n_1 \text{ and } \codim \mathop{\rm Im} \psi^2_x = n_2 \right\}.$$
\end{defin}

\begin{rem}\label{remM}
The polynomial $h^i_x(t)$ vanishes on $x_m$: $h^i_x(x_m)=0$; moreover, a number of its derivatives also vanish: $(h^i_x)^{(d^i_m)}(x_m)=0$ for $d^i_m<j_m$; as a consequence, the space $\C[t]/(h^i_x(t))$ has a system of coordinate functions sending the class $f_i(t) \mod h^i_x(t)$ to
$$f_i(x_1), f_i'(x_1), \ldots, f_i^{(j_1-1)}(x_1), \quad f_i(x_2), f_i'(x_2), \ldots, f_i^{(j_2-1)}(x_2), \quad \ldots, \quad f_i(x_k), f_i'(x_k), \ldots, f_i^{(j_k-1)}(x_k).$$
These functions form a basis of the dual space. Using this basis, we can write down the matrix $M_i$ of the map $\psi^i_x$ to find out $\codim \mathop{\rm Im} \psi^i_x=\cork M_i$.

For example, suppose that $N(j_1,\ldots,j_k)=N(1,1)$. Then the matrix of the map $\psi^i_x$ has the form
\begin{equation}
M_i=M_i(x,y) = \begin{pmatrix}
x^{b^i_1} & x^{b^i_2} & x^{b^i_3} & \ldots & x^{b^i_{s_i}} \\
y^{b^i_1} & y^{b^i_2} & y^{b^i_3} & \ldots & y^{b^i_{s_i}}
\end{pmatrix}
\end{equation}
where the columns are numbered by the elements of $B_i=\{b^i_1, \ldots, b^i_{s_i}\}$ and $x_1,x_2$ are replaced by $x,y$.

Similarly, for $N(1,1,1)$ the matrix has the form
\begin{equation}\label{eqM}
M_i=M_i(x,y,z) = \begin{pmatrix}
x^{b^i_1} & x^{b^i_2} & x^{b^i_3} & \ldots & x^{b^i_{s_i}} \\
y^{b^i_1} & y^{b^i_2} & y^{b^i_3} & \ldots & y^{b^i_{s_i}} \\
z^{b^i_1} & z^{b^i_2} & z^{b^i_3} & \ldots & z^{b^i_{s_i}}.
\end{pmatrix}
\end{equation}
\end{rem}

\subsection{Strata of the solution space and their W-images}

Now we will prove a lemma that allows us to find the irreducible components of strata $N(j_1,\ldots,j_k)$ from the irreducible components of strata $S_{n_1,n_2}$. It is a generalization of Lemma 4.4 of \cite{S21}. For this, we would need the following definition.

\begin{defin}
Let $S \subset Z_k$ be an algebraic subset and fix $N(j_1,\ldots,j_k)$. Then $W(S)$ is a set of $(f_1,f_2) \in \C^{B_1\x} \x \C^{B_2\x}$ such that there exists $(x_1,\ldots,x_k) \in S$ for which we have $ \ord_{x_m}(f_1) \geqslant j_m$ and $ \ord_{x_m}(f_2) \geqslant j_m$ for all $ m=1,\ldots,k$.
\end{defin}

\begin{lemma}\label{lemWcodim}
If $S \subset S_{n_1,n_2}$ is an algebraic subset of codimension $n_1+n_2+c$, then $W(S)$ is an algebraic subset of codimension $\sum_{m=1}^k(2j_m-1)+c$ (maybe empty).

Moreover, if $S$ is irreducible, then $W(S)$ is irreducible or empty, and it is empty if and only if $|B_1| \leqslant \sum_{m=1}^{k} j_m-n_1$ or $|B_2| \leqslant \sum_{m=1}^k j_m-n_2$.
\end{lemma}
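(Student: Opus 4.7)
I realize $W(S)$ as the image of the incidence variety
$$I := \{(x, f_1, f_2) \in S \times \C^{B_1} \times \C^{B_2} : \psi^1_x(f_1) = \psi^2_x(f_2) = 0\}$$
under the projection $\pi : I \to \C^{B_1} \times \C^{B_2}$, and compute $\dim W(S)$ by a vector-bundle argument over $S$ combined with a generic-finiteness statement for $\pi$.

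\textbf{Vector bundle structure and dimension count.} Since $S \subset S_{n_1, n_2}$, the linear map $\psi^i_x$ has constant rank $\sum_m j_m - n_i$ on $S$, so $x \mapsto \ker \psi^i_x$ defines an algebraic vector subbundle $\mathcal K_i \subset S \times \C^{B_i}$ of rank $|B_i| - \sum_m j_m + n_i$. Hence $I$ is the fiber product $\mathcal K_1 \times_S \mathcal K_2$, which is itself a vector bundle over $S$ of rank $|B_1| + |B_2| - 2\sum_m j_m + n_1 + n_2$. Using $\dim S = k - n_1 - n_2 - c$ we get
$$\dim I \;=\; (k - n_1 - n_2 - c) + (|B_1| + |B_2| - 2\sum_m j_m + n_1 + n_2) \;=\; k - c + |B_1| + |B_2| - 2\sum_m j_m.$$
A vector bundle over an irreducible base is irreducible, so when $S$ is irreducible, so is $I$, and hence so is $W(S) = \pi(I)$ (whenever nonempty). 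Emptiness of $W(S) \subset \C^{B_1\x} \times \C^{B_2\x}$ requires both fibers $\mathcal K_{i,x}$ to contain nonzero vectors, i.e. each rank $|B_i| - \sum_m j_m + n_i$ to be at least $1$; the negation is exactly $|B_1| \leqslant \sum_m j_m - n_1$ or $|B_2| \leqslant \sum_m j_m - n_2$, matching the condition of the lemma.

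\textbf{Main obstacle: generic finiteness of $\pi$.} Granting this, the codimension formula is immediate:
$$\codim W(S) \;=\; |B_1| + |B_2| - \dim I \;=\; 2\sum_m j_m - k + c \;=\; \sum_{m=1}^k (2 j_m - 1) + c.$$
Heuristically, any $x$ lying in the fiber of $\pi$ over $(f_1, f_2) \in \pi(I)$ must have its components among the common zeros of $f_1$ and $f_2$ in $\C^{\x}$ with multiplicities at least $j_m$; for a generic $(f_1, f_2)$ these common zeros have total multiplicity exactly $\sum_m j_m$, so $x$ is determined up to a finite choice of ordering, and $\pi^{-1}(f_1, f_2)$ is finite. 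To justify this rigorously, I would use upper semicontinuity of fiber dimension and exhibit a single point $(x_0, f_1, f_2) \in I$ at which $\pi^{-1}(\pi(x_0,f_1,f_2))$ is finite: pick $x_0 \in S$ generic and $(f_1, f_2) \in \mathcal K_{1,x_0} \oplus \mathcal K_{2,x_0}$ generic, and observe that requiring $(f_1, f_2)$ to share a further common root in $\C^{\x}$ cuts out a positive-codimension subvariety of this fiber. Hence a generic pair in it has common roots exactly at $x_{0,1}, \ldots, x_{0,k}$ with the prescribed multiplicities, so $\pi^{-1}(\pi(x_0, f_1, f_2)) \cap S$ consists only of reorderings of $x_0$ lying in $S$, a finite set. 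The technical content of the argument is precisely this transversality statement; the rest of the proof is dimension bookkeeping.
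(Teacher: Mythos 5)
Your construction (an incidence variety over $S$, realized as a fiber product of kernel bundles $\mathcal K_1 \times_S \mathcal K_2$, plus a dimension count) is the same as the paper's: it introduces $\widetilde N \subset \C^{B_1\x}\times\C^{B_2\x}\times Z_k$, notes that its fibers over $x\in S$ are the punctured products $(\ker\psi^1_x\setminus 0)\times(\ker\psi^2_x\setminus 0)$ forming a fiber bundle of constant rank, and computes dimensions identically to yours. Your emptiness criterion and irreducibility argument match as well.

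The point you flag as the ``main obstacle'' has, however, a much more elementary resolution than the genericity/semicontinuity argument you sketch. Over any point of $W(S)$, the fiber of the projection is \emph{always} finite, not merely generically: a point of $W(S)$ is a pair $(f_1,f_2)$ of non-identically-zero polynomials, so $f_1$ has only finitely many roots in $\C^\x$, and therefore the candidate tuples $(x_1,\ldots,x_k)$ — whose entries must be common roots of $f_1$ and $f_2$ — form a finite set. There is no need to exhibit a generic point of $I$ with finite fiber or to invoke upper semicontinuity; the paper simply makes this observation. Your detour works but introduces a claim (that a generic pair in $\mathcal K_{1,x_0}\oplus\mathcal K_{2,x_0}$ has no extra common roots) that requires justification you do not fully supply, and in fact rests on the very finiteness it is trying to establish. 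Relatedly, you take $I\subset S\times\C^{B_1}\times\C^{B_2}$ with the full (unpunctured) spaces and write $W(S)=\pi(I)$; strictly $W(S)=\pi(I)\cap(\C^{B_1\x}\times\C^{B_2\x})$, a dense open of $\pi(I)$. This does not affect the dimension or irreducibility conclusions, but working with the punctured kernels from the start, as the paper does, avoids the fiber over the zero pair and makes the everywhere-finiteness of the projection transparent.
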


\begin{proof}
Let us define a subset $\widetilde N=\widetilde N (j_1,\ldots,j_k)$ consisting of $(f_1;\ f_2;\ x_1, \ldots, x_k) \in \C^{B_1\x} \times \C^{B_2\x} \times Z_k$ such that $\ord_{x_m}(f_1) \geqslant j_m$ and $\ord_{x_m}(f_2) \geqslant j_m$ for $m=1,\ldots,k$, and also let us consider two projections:
$$p: \C^{B_1\x} \times \C^{B_2\x} \times Z_k \to \C^{B_1\x} \times \C^{B_2\x} \quad \text{ and } \quad q: \C^{B_1\x} \times \C^{B_2\x} \times Z_k \to Z_k.$$
In this terms, $W(S)=p(q^{-1}(S) \cap \widetilde N)$.

The fiber $q^{-1}(x) \cap \widetilde N$ over the point $x=(x_1, \ldots, x_k) \in Z_k$ is a subset in $\C^{B_1\x} \times \C^{B_2\x}$ which is the direct product of the spaces
$$\ker \psi^1_x \setminus \{0\} = \left\{ f_1 \in \C^{B_1\x} \,|\, \ord_{x_m} f_1 \geqslant j_m \right\} \text{ and } \ker \psi^2_x \setminus \{0\}  = \left\{ f_2 \in \C^{B_2\x} \,|\, \ord_{x_m} f_2 \geqslant j_m \right\}.$$

Let us notice that
$$\codim \ker \psi^i_x=\dim \mathop{\rm Im} \psi^i_x=\sum_{m=1}^{k} j_m-\codim \mathop{\rm Im} \psi^i_x=\sum_{m=1}^{k} j_m-n_i,$$
and, moreover, that either $\codim (\ker \psi^i_x \setminus \{0\}) = \codim \ker \psi^i_x$, or $\ker \psi^i_x \setminus \{0\}$ is empty. Moreover, it is empty if and only if $\sum_{m=1}^{k} j_m-n_i$ is equal to the dimension of the enclosing space, that is to $|B_i|$. As the dimension can not be negative, it is equal to the condition $|B_i| \leqslant \sum_{m=1}^{k} j_m-n_i$.

Recall that the fiber is $q^{-1}(x) \cap \widetilde N=\ker \psi^1_x \setminus \{0\} \x \ker \psi^2_x \setminus \{0\}$, thus its codimension is $2\sum_{m=1}^{k} j_m - n_1 - n_2$ (or its is empty, which happens if and only if $|B_1| \leqslant \sum_{m=1}^{k} j_m-n_1$ or $|B_2| \leqslant \sum_{m=1}^k j_m-n_2$).

Consider the projection $q^{-1}(S) \cap \widetilde N \to S$. As $S \subset S_{n_1,n_2}$, its fibers $q^{-1}(x) \cap \widetilde N$ are punctured vector spaces of the same dimension (or are all empty), so it is a fiber bundle (or is empty). If the base space $S$ is irreducible, then $q^{-1}(S) \cap \widetilde N$ also is irreducible (or empty).

The codimension of $q^{-1}(S) \cap \widetilde N$ inside $q^{-1}(S)$ is equal to the codimension of $q^{-1}(x)$ inside $\C^{B_1\x}\x\C^{B_2\x}\x\{x\}$, that is to $2\sum_{m=1}^{k} j_m - n_1 - n_2$ (or $q^{-1}(S)$ is empty). The codimension of $S$ is $n_1+n_2+c$, thus the codimension of $q^{-1}(S) \cap \widetilde N$ inside $\C^{B_1\x} \times \C^{B_2\x} \times Z_k$ is $2\sum_{m=1}^{k} j_m+c$ (or $q^{-1}(S)$ is empty).

The image of $\widetilde N$ under the projection $p: \C^{B_1\x} \times \C^{B_2\x} \times Z_k \to \C^{B_1\x} \times \C^{B_2\x}$ is exactly $N=N(j_1,\ldots,j_k)$, and the preimage of any point of $N$ consists of a finite number of points. Thus $\dim W(S)=\dim p(q^{-1}(S) \cap \widetilde N)=\dim q^{-1}(S) \cap \widetilde N$. If $S$ is irreducible, then $W(S)$ also is irreducible (or empty). Moreover, $\dim \C^{B_1\x} \times \C^{B_2\x} \times Z_k=\dim \C^{B_1\x} \times \C^{B_2\x}+k$, thus $\codim W(S)=\codim q^{-1}(S) \cap \widetilde N-k=\sum_{m=1}^k(2j_m-1)+c$ (or $W(S)$ is empty).

We can easily see that the sets above are empty if and only if $|B_1| \leqslant \sum_{m=1}^k j_m-n_1$ or $|B_2| \leqslant \sum_{m=1}^k j_m-n_2$.
\end{proof}

\begin{rem}
If each $S_{n_1,n_2}=\bigsqcup_i S_{n_1,n_2}^i$ is written as a union $S_{n_1,n_2}=\bigcup_i S_{n_1,n_2}^i$ of irreducible algebraic subsets $S_{n_1,n_2}^i$, then the union of the corresponding subsets $W(S)$ is the whole $N(j_1,\ldots,j_k)$ (but this union is not in general disjoint).
\end{rem}

\subsection{Components of N(1,1)}

Now let us find out what $S_{n_1,n_2} \subset Z_2$ are in the case of $N(1,1)$.

Let $k_i=\langle B_i\rangle$, so that $B_i$ can be shifted to the proper sublattice $k_i\Z$, but not to any larger sublattice.

Let $\epsilon^1_{k_i}, \ldots, \epsilon^{k_i-1}_{k_i}$ be $k_i$-th roots of unity different from 1, and let $E^1_{k_i}, \ldots, E^{k_i-1}_{k_i}$ be the following subsets of $Z_2$: 
$$E^j_{k_i}=\left\{ (x_1, x_2) \in Z_2 \,|\, x_1/x_2=\epsilon^j_{k_i} \right\}.$$

\begin{lemma}[see Lem. 4.8, 4.11 of \cite{S21}]
The set $Z_2$ has the following decompositions:
\begin{enumerate}
    \item $Z_2=S^i_0 \sqcup S^i_1$, where $S^i_1=E^1_{k_i} \sqcup \ldots \sqcup E^{k_i-1}_{k_i}$.
    \item If $\langle B_1,B_2\rangle=1$ (in other words, if $k_1$ and $k_2$ are coprime), then $Z_2=S_{00} \sqcup S_{10} \sqcup S_{01}$, where $S_{10}=E^1_{k_1} \sqcup \ldots \sqcup E^{k_1-1}_{k_1}$ and $S_{01}=E^1_{k_2} \sqcup \ldots \sqcup E^{k_2-1}_{k_2}$.
    \end{enumerate}
\end{lemma}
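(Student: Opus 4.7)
The plan is to work directly with the matrix $M_i$ from Remark \ref{remM} for the case $N(1,1)$, which has two rows indexed by $x_1, x_2$ and columns indexed by $B_i$. Since $(x_1,x_2)\in Z_2$ means both entries are nonzero and distinct, each row is nonzero, so $\rk M_i\in\{1,2\}$ and hence $\cork M_i\in\{0,1\}$; in particular only the strata $S^i_0$ and $S^i_1$ can appear, giving the disjoint decomposition in (1) as soon as $S^i_1$ is identified.

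For (1), I would simply characterize when the two rows of $M_i$ are proportional: this happens iff $(x_1/x_2)^b$ is independent of $b\in B_i$, equivalently $(x_1/x_2)^{b-b'}=1$ for all $b,b'\in B_i$. The subgroup of such ratios is the subgroup of roots of unity whose order divides $\gcd(B_i-B_i)=k_i$ (using that $k_i=\langle B_i\rangle$ is precisely the gcd of all pairwise differences of $B_i$). Removing the case $x_1/x_2=1$ (excluded on $Z_2$) leaves exactly $(x_1/x_2)\in\{\epsilon^1_{k_i},\ldots,\epsilon^{k_i-1}_{k_i}\}$, which is $E^1_{k_i}\sqcup\cdots\sqcup E^{k_i-1}_{k_i}$.

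For (2), the only thing extra to check is that $S_{11}=S^1_1\cap S^2_1=\emptyset$ under the hypothesis $\langle B_1,B_2\rangle=1$, since then $Z_2=S_{00}\sqcup S_{10}\sqcup S_{01}\sqcup S_{11}$ reduces to the claimed three-piece decomposition, and by (1) we have $S_{10}=S^1_1$ and $S_{01}=S^2_1$ with the stated $E^j_{k_i}$-descriptions. A point of $S_{11}$ would require $x_1/x_2$ to be simultaneously a nontrivial $k_1$-th and a nontrivial $k_2$-th root of unity, hence a nontrivial $\gcd(k_1,k_2)$-th root of unity. The remaining observation is the identity $\langle B_1,B_2\rangle=\gcd(k_1,k_2)$: both sides equal the largest $m$ such that each $B_i$ can be shifted into $m\Z$, which amounts to $m\mid k_i$ for $i=1,2$. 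Thus the coprimality hypothesis forces $\gcd(k_1,k_2)=1$ and $S_{11}$ to be empty.

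There is no serious obstacle here: the argument is purely linear-algebraic and arithmetic, and the only subtlety worth flagging is the bookkeeping identification $\langle B_1,B_2\rangle=\gcd(\langle B_1\rangle,\langle B_2\rangle)$, which translates the hypothesis of (2) into the usable form $\gcd(k_1,k_2)=1$.
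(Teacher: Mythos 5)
Your argument is correct and is exactly the intended one. The paper itself does not reprove this lemma but defers to Lemmas 4.8 and 4.11 of the cited work \cite{S21}; your derivation is the natural direct argument: for $(x_1,x_2)\in Z_2$ both rows of the $2\times|B_i|$ matrix $M_i$ are nonzero, so $\cork M_i\in\{0,1\}$, and $\cork M_i=1$ precisely when $(x_1/x_2)^{b-b'}=1$ for all $b,b'\in B_i$, i.e.\ $x_1/x_2$ is a nontrivial $k_i$-th root of unity. Part (2) then reduces to $S_{11}=\varnothing$, which follows since the order of $x_1/x_2$ would have to divide $\gcd(k_1,k_2)=\langle B_1,B_2\rangle=1$. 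Your auxiliary observation that $\langle B_1,B_2\rangle=\gcd(\langle B_1\rangle,\langle B_2\rangle)$ is also stated explicitly in the paper (just before Remark \ref{remconj1}), so no gap remains.
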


\begin{sledst}\label{corN11}
From Lemma \ref{lemWcodim} and the remark after it, we deduce the following decomposition:
$$N(1,1)=WS_{00} \cup \left( WE^1_{k_1} \cup \ldots \cup WE^{k_1-1}_{k_1} \right) \cup \left( WE^1_{k_2} \cup \ldots \cup WE^{k_2-1}_{k_2} \right).$$

If $|B_1|>2$ and $|B_2|>2$, then $WS_{00}$ and $WE^j_{k_i}$ are irreducible subsets of codimension 2 (not necessarily different from each other). If $|B_1|=2$, then $WE^j_{k_2}$ are empty, and similarly in the case $|B_2|>2$.
\end{sledst}

Here we can explicitly describe the components of $N(1,1)$ as follows:
$$WE^j_{k_i}=
\left\{ (f,g) \in \C^{B_1\x} \x \C^{B_2\x} \,|\, \exists x \in \C^\x: f(x)=g(x)=f\left(x\cdot\epsilon^j_{k_i}\right)=g\left(x\cdot \epsilon^j_{k_i}\right)=0\right\},$$
thus, in terms of Theorem \ref{th0res}, its closure is $S_i^j$, while the closure of $WS_{00}$ is $S$.

The problem that we solve in the rest of the section is that these components partially coincide.

\subsection{Schur polynomials}

\begin{defin}
Consider a triple of integers $0 \leqslant a<b<c$. It defines a polynomial
$$\det\nolimits_{a,b,c}(x,y,z)=\begin{pmatrix}
x^a & x^b & x^c\\
y^a & y^b & y^c\\
z^a & z^b & z^c
\end{pmatrix}.$$
\end{defin}

This polynomial is divisible by the Vandermonde determinant $\det\nolimits_{0,1,2}(x,y,z)=(x-y)(y-z)(z-x)$. The quotient $\det\nolimits_{a,b,c}/\det\nolimits_{0,1,2}$ is called a Schur polynomial. We would like Schur polynomials to be irreducible, but it is not in general true; however, the following theorem of Dvornicich and Zannier is true:
\begin{theor}[see Th.3.1. of \cite{zannier}]
Suppose $a=0$ and denote $GCD(b,c)$ by $n$. The polynomials $\det\nolimits_{0,b,c}/\det\nolimits_{0,n,2n}$ are irreducible, except for the cases of the form $b=n$ and $c=2n$, in which they are constant.
\end{theor}

\begin{sledst}\label{corGCDdet}
Consider two triples of integers $0=a_1<b_1<c_1$ and $0=a_2<b_2<c_2$. Denote $GCD(b_i,c_i)$ by $n_i$ and $GCD(n_1,n_2)$ by $n$. Then
$$GCD(\det\nolimits_{0,b_1,c_1}, \det\nolimits_{0,b_2,c_2})=\det\nolimits_{0,n,2n},$$
unless $b_1=b_2$ and $c_1=c_2$.
\end{sledst}

\begin{proof}
Let us denote the irreducible polynomials $\det\nolimits_{0,b,c}/\det\nolimits_{0,n,2n}$ from the theorem above by $T_{0,b,c}$. Then $\det\nolimits_{0,b_i,c_i}=T_{0,b_i,c_i} \cdot \det\nolimits_{0,n_i,2n_i}$.

Let us lexicographically order the monomials $x^jy^kz^l$. The highest monomial of $\det\nolimits_{0,b_i,c_i}$ is $x^{c_i}y^{b_i}$, thus the highest monomial of $T_{0,b_i,c_i}$ is $x^{c_i-2n_i}y^{b_i-n_i}$. Thus either $b_1=b_2$ and $c_1=c_2$, or the highest monomials of the irreducible polynomials $T_{0,b_1,c_1}$ and $T_{0,b_2,c_2}$ do not coincide, thus these polynomials are different.

The polynomials $T_{0,b_1,c_1}$ are irreducible and symmetric in $a,b,c$, while the polynomials $x^{n_i}-y^{n_i}$, $y^{n_i}-z^{n_i}$ and $z^{n_i}-x^{n_i}$ (whose product is $\det\nolimits_{0,n_i,2n_i}$) depend only on two of the variables $x$, $y$ and $z$, thus they can not have a non-constant common divisor.

Consequently,
$$GCD(\det\nolimits_{0,b_1,c_1}, \det\nolimits_{0,b_2,c_2})=GCD(\det\nolimits_{0,n_1,2n_1}, \det\nolimits_{0,n_2,2n_2})=\det\nolimits_{0,n,2n}.$$
\end{proof}

\subsection{Vandermonde matrices}

In this subsection, we work over an arbitrary field $k$ of characteristics 0.

\begin{lemma}\label{lem3minor}
Let $x$, $y$ and $z$ be roots of unity, let $a$, $b$ and $c$ be integer numbers and consider a matrix
$$M(a,b,c;x,y,z) = \begin{pmatrix}
x^a & x^b & x^c \\
y^a & y^b & y^c \\ 
z^a & z^b & z^c
\end{pmatrix}$$
If this matrix is degenerate, then it has either two proportional rows or two proportional columns.
\end{lemma}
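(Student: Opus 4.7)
The natural attack proceeds via the factorization supplied by Corollary~\ref{corGCDdet} (which invokes Dvornicich--Zannier). After factoring $(xyz)^a$ out of each row, one may assume $a = 0$, and then
\[
\det\nolimits_{0,b,c}(x,y,z) = T_{0,b,c}(x,y,z) \cdot \det\nolimits_{0,n,2n}(x,y,z),
\]
where $n = \gcd(b,c)$ and the factor $T_{0,b,c}$ is irreducible (or constant when $b = n$, $c = 2n$). Vanishing of the determinant splits according to which factor is zero.

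If $\det\nolimits_{0,n,2n}(x,y,z) = (x^n - y^n)(y^n - z^n)(z^n - x^n) = 0$, then two of $x^n, y^n, z^n$ coincide; say $x^n = y^n$. Since $n$ divides both $b$ and $c$, this gives $x^b = y^b$ and $x^c = y^c$, so rows $1$ and $2$ are equal. Otherwise $T_{0,b,c}(x,y,z) = 0$ while $x^n, y^n, z^n$ are pairwise distinct; setting $X := x^n$, $Y := y^n$, $Z := z^n$, $B := b/n$, $C := c/n$ identifies the matrix with the $(0, B, C)$-matrix in $X, Y, Z$, and both the degeneracy condition and the three column proportionality conditions transport over. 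So one may assume $\gcd(b, c) = 1$ with $X, Y, Z$ pairwise distinct roots of unity.

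In this reduced setting, the determinant vanishes iff there is a nonzero trinomial $P(t) = \alpha + \beta t^B + \gamma t^C$ with $P(X) = P(Y) = P(Z) = 0$. When one coefficient of $P$ vanishes the conclusion is immediate: $\gamma = 0$ forces $X^B = Y^B = Z^B$ (columns $1, 2$ proportional), $\beta = 0$ forces $X^C = Y^C = Z^C$ (columns $1, 3$ proportional), and $\alpha = 0$ forces $X^{C-B} = Y^{C-B} = Z^{C-B}$ (columns $2, 3$ proportional).

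The delicate case is when $\alpha, \beta, \gamma$ are all nonzero. Rescaling columns $2$ and $3$ by $X^{-B}$ and $X^{-C}$ respectively preserves both the determinant vanishing and each column proportionality, so one may normalize $X = 1$. Then $P(1) = 0$ eliminates $\alpha$, and the conditions $P(Y) = P(Z) = 0$ reduce to
\[
(Y^B - 1)(Z^C - 1) = (Y^C - 1)(Z^B - 1).
\]
A short case analysis covers the subcases where one of $Y^B, Y^C, Z^B, Z^C$ equals $1$ (using $\gcd(B,C)=1$ to preclude $Y = 1$ or $Z = 1$). The main obstacle is the remaining subcase, where all four quantities differ from $1$, and the relation is genuinely a vanishing sum of six roots of unity,
\[
Y^B Z^C + Y^C + Z^B = Y^C Z^B + Y^B + Z^C.
\]
One must show this forces one of the three column proportionalities not visible from the degenerate-coefficient analysis. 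The cleanest route is via the Laurent--Lang theorem on torsion points in subvarieties of $\CC^3$: torsion points on the irreducible hypersurface $\{T_{0,B,C} = 0\}$ lie in a finite union of torsion cosets of proper subtori, and a Newton-polytope analysis of $T_{0,B,C}$ (which is not a binomial since $T_{0,B,C}$ is a nontrivial Schur polynomial) shows these subtori are precisely the three ``column proportionality'' ones. A more elementary but computational alternative is to apply the Conway--Jones classification of minimal vanishing sums of at most six roots of unity directly to the displayed identity.
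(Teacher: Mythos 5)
Your proposal takes a genuinely different route from the paper, and it is not complete.

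The paper's proof is short and elementary: normalize $z=1$ by dividing columns; write $\det M=0$ as
\[
\frac{x^a-x^b}{x^a-x^c}=\frac{y^a-y^b}{y^a-y^c}
\]
after disposing of the degenerate-factor cases; and then exploit the fact that $x,y$ are roots of unity by applying the Galois automorphism $\epsilon\mapsto\epsilon^{-1}$ (complex conjugation if one works over $\C$). A direct computation shows
\[
\Bigl(\frac{x^a-x^b}{x^a-x^c}\Bigr)\Big/\overline{\Bigl(\frac{x^a-x^b}{x^a-x^c}\Bigr)}=\frac{x^b}{x^c},
\]
so equality of the two ratios forces $x^b/x^c=y^b/y^c$, i.e.\ two proportional columns. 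That is the whole argument; it requires nothing beyond the observation that a root of unity is inverted by some field automorphism over $\Q$.

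Your plan instead factors via Dvornicich--Zannier's $\det_{0,b,c}=T_{0,b,c}\cdot\det_{0,n,2n}$, reduces to $\gcd(B,C)=1$, and arrives at the identity
\[
(Y^B-1)(Z^C-1)=(Y^C-1)(Z^B-1),
\]
equivalently a vanishing sum of six roots of unity. Up to this point the reductions are correct (and the Schur-factorization idea is a reasonable one). But the decisive step --- showing that this identity forces one of the three column proportionalities --- is only gestured at. The Laurent--Lang route does not obviously close: Laurent's theorem yields finitely many torsion cosets \emph{plus possibly isolated torsion points} on $\{T_{0,B,C}=0\}$, and no argument is given either (a) that the positive-dimensional cosets are exactly the three ``column'' subtori, or (b) that there are no stray isolated torsion points falsifying the conclusion; the appeal to a ``Newton-polytope analysis'' is left unsubstantiated. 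The Conway--Jones alternative would work in principle, but it is a nontrivial case analysis over the several ways the six-term sum can decompose into minimal vanishing subsums, and none of it is carried out. So as written there is a genuine gap at the only hard point of the lemma. If you want to keep your framework, I would suggest carrying out the Conway--Jones case analysis explicitly; but the paper's conjugation trick is shorter and avoids both pieces of machinery.
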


\begin{proof}
Without loss of generality we can divide the columns by $z^a$, $z^b$ and $z^c$ and thus replace $M(a,b,c;x,y,z)$ with $M(a,b,c;x/z,y/z,1)$. We denote this $x/z$ and $y/z$ by $x$ and $y$ and consider
$$M=M(a,b,c;x,y,1) = \begin{pmatrix}
x^a & x^b & x^c \\
y^a & y^b & y^c \\
1 & 1 & 1
\end{pmatrix}$$
Let us write the equation $\det M=0$ as
$$(x^a-x^b)(y^a-y^c)=(x^a-x^c)(y^a-y^b).$$
Suppose that $x^a=x^b$. Then either $x^a=x^c$ and $M$ has two proportional rows or $y^a=y^b$ and $M$ has two proportional (even equal) columns.

As the conditions of the lemma are symmetric in $a$, $b$ and $c$, and are symmetric in $x$ and $y$, we can now suppose that the numbers $x^a$, $x^b$, $x^c$ are pairwise different and that the numbers $y^a$, $y^b$, $y^c$ are also pairwise different. Thus we can write the condition above as
$$\frac{x^a-x^b}{x^a-x^c}=\frac{y^a-y^b}{y^a-y^c} \neq 0.$$

Suppose that $k=\C$ and consider the complex conjugation. We would like to prove that
$$\left(\frac{x^a-x^b}{x^a-x^c}\right) / \overline{\left(\frac{x^a-x^b}{x^a-x^c}\right)} = \frac{x^b}{x^c}.$$

Indeed, one just opens up the brackets and uses the equalities $x^a\overline x^a=x^b\overline x^b=x^c\overline x^c=1$ to prove it. Consequently, $x^b/x^c=y^b/y^c$, so $M$ has two proportional columns.

Now let $k$ be any field of characteristics $0$. Without loss of generality we can suppose that it is algebraically closed. If $x$ and $y$ are two roots of unity (maybe equal), then they generate a finite group, but a finite subgroup of the multiplicative group of a field is cyclic. Let us denote its generator by $\epsilon$, take an automorphism $\phi$ of $\Q(\epsilon)$ such that $\phi(\epsilon)=\epsilon^{-1}$ and extend it to an  automorphism $\overline\phi$ of $k$. Then $\overline\phi$ is a field automorphism over $\Q$ such that $\overline\phi(x)=x^{-1}$ and $\overline\phi(y)=y^{-1}$, that is $x\overline\phi(x)=y\overline\phi(y)=1$, and we can repeat the calculation above using $\overline\phi$ instead of the complex conjugation to prove that $x^b/x^c=y^b/y^c$.
\end{proof}

\begin{defin}
Let $M$ be a matrix with non-zero entries. The combinatorial rank $\crk M$ is the minimum of the number of classes of proportional rows of $M$ and the number of classes of proportional columns of $M$.
\end{defin}

In these terms, Lemma \ref{lem3minor} says that for a certain class of $3 \x 3$-matrices, if $\rk M < 3$, then $\crk M < 3$.

\begin{lemma}\label{lem3prop}
Consider an $m \x n$-matrix $M$ with non-zero entries such that $m < n$. If for every maximal minor $N$ the combinatorial rank $\crk N < m$, then $\crk M < m$.
\end{lemma}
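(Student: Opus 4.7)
The plan is to argue by contradiction: assume $\crk M = m$, so the rows of $M$ are pairwise non-proportional and the set $\mathcal{K}$ of column classes of $M$ satisfies $|\mathcal{K}| \geq m$. The first step is to encode the proportionality data as a multiplicative cocycle: for each ordered pair of row indices $(i,j)$ I would define $\phi_{ij}\colon \mathcal{K} \to k^*$ by $\phi_{ij}(C) := M_{i,c}/M_{j,c}$ for any $c \in C$, which is well-defined on classes and yields non-constant functions (by row non-proportionality) satisfying the cocycle identities $\phi_{ij}\phi_{jk} = \phi_{ik}$. Call a subset $S \subseteq \mathcal{K}$ \textit{good} if every $\phi_{ij}$ is non-constant on $S$: picking one representative per class in a good $m$-subset produces an $m \times m$ submatrix with $m$ pairwise non-proportional rows and $m$ distinct column classes, hence $\crk = m$, which the hypothesis forbids. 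So no $m$-subset of $\mathcal{K}$ is good.

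The strategy is to prune $\mathcal{K}$ down to size $m$ while preserving goodness, thereby producing the forbidden good $m$-subset. Starting from $\mathcal{K}_0 := \mathcal{K}$, at each step with $|\mathcal{K}_t| > m$ I would remove a class $C^*_t \in \mathcal{K}_t$ whose deletion still leaves every $\phi_{ij}$ non-constant, until $|\mathcal{K}_T| = m$. Everything therefore reduces to showing that such a non-essential class always exists.

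To that end, suppose all classes in $\mathcal{K}_t$ are essential: each $C \in \mathcal{K}_t$ is the ``minority'' class of some pair $(i,j)$, meaning $\phi_{ij}|_{\mathcal{K}_t}$ takes a majority value $\lambda_{ij}$ on $\mathcal{K}_t \setminus \{C\}$ and a distinct value $\lambda'_{ij}$ at $C$. The minority class of a pair is unique (two would force $\phi_{ij}$ constant on $\mathcal{K}_t$), giving an injection $f$ from $\mathcal{K}_t$ into the set of unordered row-pairs. Its image is a simple graph $G$ on $m$ vertices with $|\mathcal{K}_t| > m - 1$ edges, so $G$ contains a cycle $v_1 v_2 \cdots v_k v_1$ of length $k \geq 3$; set $C_i := f^{-1}(\{v_i, v_{i+1}\})$, pairwise distinct. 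Telescoping the cocycle identities along this cycle yields $\prod_{i=1}^k \phi_{v_i v_{i+1}} \equiv 1$ on $\mathcal{K}_t$. Evaluating at a ``neutral'' class $C_0 \in \mathcal{K}_t \setminus \{C_1,\ldots,C_k\}$ (such exists because $k \leq m < |\mathcal{K}_t|$) gives $\prod_i \lambda_i = 1$, while evaluating at $C_1$ --- where by injectivity of $f$ one has $C_1 \neq C_j$ for $j \neq 1$ --- gives $\lambda'_1 \prod_{j \neq 1} \lambda_j = 1$. Dividing these two identities forces $\lambda'_1 = \lambda_1$, contradicting the majority/minority distinction.

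The hard part is precisely this multiplicative invariant: it is the cocycle relation $\phi_{ij}\phi_{jk} = \phi_{ik}$, read along any cycle in the ``essentiality graph'' formed by row-pairs, that forces the majority and minority values of some $\phi_{ij}$ to coincide. Once this cycle-based contradiction is established, the iterative pruning runs to completion and produces the forbidden good $m$-subset.
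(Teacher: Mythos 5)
Your proof is correct, but it takes a genuinely different route from the paper's. The paper's argument is projective-geometric: it regards the columns of $M$ as points of $\mathbb P^{m-1}$ and notes that for an $m\times m$ minor $N$ whose columns give $m$ distinct projective points, the hypothesis $\crk N<m$ forces two rows of $N$ to be proportional, equivalently forces all $m$ points into a projective hyperplane $H^{i,j}_\kappa=\{(x_1:\cdots:x_m)\ |\ x_i=\kappa x_j\}$. Choosing $m$ distinct columns that span the projective hull $L$ of all columns, that single minor pins down a fixed $H^{i,j}_\kappa\supset L$, so every column of $M$ lies in it and rows $i,j$ of $M$ are proportional. Your argument avoids projective spans entirely: you package the row data as multiplicative cocycles $\phi_{ij}$ on column classes and prove by a graph-theoretic pruning induction that a good set of column classes can always be shrunk to size $m$, producing a forbidden $m\times m$ minor; the cycle contradiction plays the role the spanning argument plays in the paper. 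The paper's proof is shorter and leans on linear algebra, while yours is more elementary, works entry-by-entry with the multiplicative structure alone, and would transfer verbatim to matrices with entries in an arbitrary abelian group (where there is no ambient projective space to span). Both arguments are correct and characteristic-free.
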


\begin{proof}
Consider the columns as the points of $\mathbb P^{m-1}$. The condition that the combinatorial rank of $M$ (resp., a maximal minor $N$) is less than $m$ means that
\begin{enumerate}
    \item either the columns of $M$ (resp., $N$) form less than $m$ points of $\mathbb P^{m-1}$,
    \item or all of these points lie in the same hypersurface of the form
$$H^{i,j}_k=\{(x_1:\ldots:x_m) | x_i/x_j=k \}.$$
\end{enumerate}

If the number of points of $P^{m-1}$ formed by the columns of $M$ is less than $m$, the theorem is proven. Otherwise let $L$ be the projective hull of these points. It is a projective space of dimension at most $(m-1)$, thus we can take $m$ points that generate it. By the condition on the maximal minor $N$ containing all the columns defining these $m$ points, all of them lie in a certain hypersurface $H^{i,j}_k$, thus $L \subset H^{i,j}_k$ and all of the points defined by the columns of $M$ lie in $H^{i,j}_k$, that is the combinatorial rank $\crk M < m$.
\end{proof}

If $B_1, \ldots, B_n \subset Z$ are finite sets of integers, then $\langle B_1,\ldots,B_n\rangle$ is the maximal integer $b$ such that for every $j$ and every $b',b'' \in B_j$ the number $b'-b''$ is divisible by $b$. In particular, $\langle B_1,\ldots,B_n\rangle=GCD \langle B_j \rangle$.

\begin{defin}
Let $X_1,\ldots,X_m$ be finite sets of roots of unity. Define $\langle X_1,\ldots,X_n \rangle$ to be the minimal integer $a$ such that for every $i$ and every $x', x'' \in X_i$ the number $x'/x''$ is $a$-th root of unity. In particular, $\langle X_1,\ldots,X_m \rangle=LCM \langle X_i \rangle$.
\end{defin}

\begin{rem}\label{remconj1}
Let $X=\{ x_1, \ldots, x_m \}$ be a finite set of roots of unity, let $B=\{ b_1, \ldots, b_n \}$ be a finite set of integer numbers, and consider an $m \times n$-matrix $M=(x_i^{b_j})$. If its rank is 1, then $\langle X \rangle$ is divisible by $\langle B \rangle$.
\end{rem}

\begin{lemma}\label{lemMsplits2}
Let $X=\{ x_1, x_2, x_3 \}$ be a finite set of roots of unity, let $B=\{ b_1, \ldots, b_n \}$ be a finite set of integer numbers, and consider an $3 \times n$-matrix $M=(x_i^{b_j})$. If its rank is 2, then either $X=X_1 \sqcup X_2$ such that $\langle X_1, X_2 \rangle$ is divisible by $\langle B \rangle$, or $B=B_1 \sqcup B_2$ such that $\langle X \rangle$ is divisible by $\langle B_1, B_2 \rangle$.
\end{lemma}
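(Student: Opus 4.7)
The plan is to combine Lemma \ref{lem3minor} and Lemma \ref{lem3prop} with an elementary dictionary between row/column proportionality of the Vandermonde-type matrix $M$ and the multiplicative-order invariants $\langle\cdot\rangle$.

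First I would observe that since $\rk M = 2 < 3$, every $3\times 3$ submatrix $N$ of $M$ has vanishing determinant, hence $\rk N < 3$. By Lemma \ref{lem3minor}, each such $N$ has two proportional rows or two proportional columns, i.e.\ $\crk N < 3$. For $n \geqslant 4$ this is precisely the hypothesis of Lemma \ref{lem3prop} applied with $m = 3$, which yields $\crk M < 3$. The case $n = 3$ is Lemma \ref{lem3minor} applied directly to $M$, and for $n \leqslant 2$ the bound $\crk M \leqslant n < 3$ is immediate from the definition of combinatorial rank.

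Next I would use the hypothesis $\rk M = 2$ to rule out a one-class partition: if all three rows (or all $n$ columns) of $M$ were proportional, then $\rk M \leqslant 1$, a contradiction. So both the row and the column proportionality partitions have at least two classes; combined with $\crk M < 3$ from the previous step, at least one of them has exactly two. In the row case this yields a partition $X = X_1 \sqcup X_2$ into row proportionality classes; in the column case, a partition $B = B_1 \sqcup B_2$ into column proportionality classes.

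The final step is to translate proportionality of rows or columns into the required divisibility relations. For two rows indexed by $x, y$, the rows $(x^{b_j})_j$ and $(y^{b_j})_j$ are proportional iff $(x/y)^{b_j - b_{j'}} = 1$ for all $j, j'$, which exactly says that $\ord(x/y)$ divides every difference in $B$ and hence divides $\langle B\rangle$. Consequently $\langle X_i\rangle \mid \langle B\rangle$ for $i = 1, 2$, and therefore $\langle X_1, X_2\rangle = \mathrm{lcm}(\langle X_1\rangle, \langle X_2\rangle)$ divides $\langle B\rangle$. The column case is symmetric: proportionality of $(x_k^b)_k$ and $(x_k^{b'})_k$ is equivalent to $(x_k/x_l)^{b - b'} = 1$ for all $k, l$, i.e.\ $\langle X\rangle \mid b - b'$; so $\langle X\rangle$ divides every $\langle B_j\rangle$ and thus also $\langle B_1, B_2\rangle = \gcd(\langle B_1\rangle, \langle B_2\rangle)$. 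I do not expect any serious obstacle, since the whole argument is a clean assembly of the two preceding lemmas with the elementary dictionary above; the only mild care needed is in the small-$n$ cases of the first step and in excluding degenerate one-class partitions in the second, both of which follow routinely from $\rk M = 2$.
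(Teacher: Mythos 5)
Your proof is correct and follows essentially the same route as the paper: reduce via Lemma \ref{lem3minor} to the statement that every $3\times 3$ minor has combinatorial rank $< 3$, propagate that to $\crk M < 3$ via Lemma \ref{lem3prop}, and translate the resulting row- or column-proportionality partition into the divisibility statements. The only differences are cosmetic—you spell out the small-$n$ cases and the exclusion of a one-class partition, and you verify the proportionality-to-divisibility dictionary inline where the paper instead invokes Remark \ref{remconj1}.
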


\begin{proof}
The rank of $M$ is less then 3, thus every its $3 \x 3$-minor vanishes. By Lemma \ref{lem3minor}, every its such minor has either two proportional rows or two proportional columns. By Lemma \ref{lem3prop}, either the rows or the columns of $M$ can be divided into two groups of mutually proportional elements. If $X=X_1 \sqcup X_2$, such that the rows from the same $X_i$ are proportional, then we have two matrices $M_1$ and $M_2$, consisting from the rows of $M$ from $X_1$ and $X_2$ respectively. Both $M_i$ have rank 1, thus by Remark \ref{remconj1}, $\langle X_i \rangle$ is divisible by $\langle B \rangle$, thus $\langle X_1, X_2 \rangle$ is also divisible by $\langle B \rangle$. The case of $B=B_1 \sqcup B_2$ is similar.
\end{proof}

\subsection{Components of N(1,1,1)}

\begin{lemma}\label{lemcompN111}
Unless $|B_1|=3$ and $B_1=B_2 + const$, the closures of the irreducible components of $N(1,1,1)$ of codimension 2 are $S_m$ for $m \geqslant 3$ as defined in Theorem \ref{th0res}.
\end{lemma}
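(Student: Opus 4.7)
The plan is to apply Lemma~\ref{lemWcodim} with $k=3$ and $j_1=j_2=j_3=1$: an irreducible subset $S\subset S_{n_1,n_2}\subset Z_3$ of codimension $n_1+n_2+c$ yields an irreducible $W(S)\subset N(1,1,1)$ of codimension $3+c$ (when nonempty). Codimension-$2$ components therefore correspond to strata $S$ with $\codim_{Z_3} S=n_1+n_2-1$, and I would enumerate the admissible pairs $(n_1,n_2)$ to show that the only contributions outside the excluded case come from $(n_1,n_2)\in\{(1,2),(2,1)\}$, producing exactly the strata $S_m$ of Theorem~\ref{th0res}.

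The main case is $(n_1,n_2)=(2,1)$ (with $(1,2)$ symmetric): here $\dim S=1$, $\rk M_1=1$, and $\rk M_2=2$. The rank-$1$ condition on $M_1$ forces all pairwise ratios $x_i/x_j$ of $X=\{x_1,x_2,x_3\}$ to be $k_1$-th roots of unity, where $k_1:=\langle B_1\rangle$ (by a short direct calculation on proportionality of columns of $M_1$). Applying Lemma~\ref{lemMsplits2} to $M_2$ gives two alternatives: either $X=X_1\sqcup X_2$ with $\langle X_1,X_2\rangle\mid k_2$, or $B_2=B_{2,1}\sqcup B_{2,2}$ with $\langle X\rangle\mid\langle B_{2,1},B_{2,2}\rangle$. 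The first alternative is excluded by the hypothesis $\langle B_1,B_2\rangle=\gcd(k_1,k_2)=1$: each $\langle X_i\rangle$ then divides both $k_1$ and $k_2$, hence divides $1$, forcing each $X_i$ to be a singleton and $|X|\le 2$, contradicting $|X|=3$. The second alternative yields precisely the $S_m$ stratum of Theorem~\ref{th0res} (with the roles of $B_1,B_2$ exchanged), where $m=\langle X\rangle$ divides $\langle B_{2,1},B_{2,2},B_1\rangle$.

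The remaining cases are eliminated as follows. For $(n_1,n_2)=(2,2)$, the rank-$1$ condition on both matrices yields $\langle X\rangle\mid\gcd(k_1,k_2)=1$, hence $|X|\le 1$, contradicting $|X|=3$. Cases with $n_i\ge 3$ would require $\rk M_i=0$, impossible since $x_j\ne 0$. Cases with some $n_i=0$ either force $\dim S$ beyond the actual stratum dimension, or produce empty $W(S)$ by the non-emptiness criterion of Lemma~\ref{lemWcodim}. The delicate case is $(n_1,n_2)=(1,1)$, where $\dim S=2$ requires a shared codimension-$1$ component of $\{\rk M_1<3\}$ and $\{\rk M_2<3\}$ in $Z_3$. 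By Corollary~\ref{corGCDdet}, the ``principal'' codimension-$1$ components of $\{\rk M_i<3\}$ are the hypersurfaces cut out by $\det\nolimits_{0,k_i,2k_i}$, namely $\{x_a^{k_i}=x_b^{k_i}\}$; these share no component under $\gcd(k_1,k_2)=1$. Any further codimension-$1$ component must come from an irreducible nontrivial Schur polynomial (via Theorem~3.1 of \cite{zannier}), and such components exist only when $|B_i|=3$ and $B_i$ is non-arithmetic; two such Schur hypersurfaces (one from $B_1$, one from $B_2$) coincide precisely when the underlying difference patterns agree, i.e.\ when $B_1=B_2+\mathrm{const}$ up to a shift---exactly the excluded scenario.

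The main technical obstacle lies in the $(1,1)$ analysis: ruling out exceptional codimension-$1$ rank-drop components relies on the Dvornicich--Zannier irreducibility theorem, combined with the observation that the Schur polynomial of a three-element support set depends only on its difference pattern, so coincidence of the Schur polynomials of $B_1$ and $B_2$ pins down exactly the excluded shift relation.
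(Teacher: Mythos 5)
Your proposal follows essentially the same route as the paper: enumerate the pairs $(n_1,n_2)$, apply Lemma~\ref{lemWcodim} to translate components of $N(1,1,1)$ into strata of $Z_3$, handle the main case $(2,1)/(1,2)$ via Lemma~\ref{lemMsplits2}, and dispose of $(1,1)$ via Corollary~\ref{corGCDdet} together with irreducibility of the Schur factor (Dvornicich--Zannier). The conclusions match the paper's proof case by case, including the identification of $S_m$ and the role of the excluded scenario $|B_1|=3$, $B_1=B_2+\mathrm{const}$. A small local difference: your $(2,2)$ elimination is self-contained (rank one on both sides forces $\langle X\rangle\mid\gcd(k_1,k_2)=1$), whereas the paper cites \cite{S21}~Lem.~4.9; this is a welcome simplification. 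Your $(1,1)$ discussion is phrased through an explicit arithmetic-versus-Schur decomposition of the codimension-one locus of $\{\rk M_i<3\}$, while the paper takes a slightly cleaner GCD-of-minors argument; the two achieve the same end.

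Two places you should tighten. First, the treatment of the $n_i=0$ cases (``force $\dim S$ beyond the actual stratum dimension'') implicitly relies on codimension bounds for $S^i_1$ and $S^i_2$; the paper imports these from \cite{S21}~Cor.~4.7, and you should either cite them or rederive them (the $S^i_2$ bound is the non-trivial one). Second, in the $(1,1)$ analysis you check arithmetic--arithmetic and Schur--Schur coincidences but not arithmetic--Schur: you should note that the irreducible Schur factor $T_{0,b,c}$ is a symmetric polynomial in $x,y,z$, hence is never a scalar multiple of $x-\epsilon y$ and cannot vanish on any component $\{x=\epsilon y\}$ of an arithmetic hypersurface; this rules out that mixed overlap. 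Finally, be careful with the divisibility direction: the conditions coming from rank drop read $\langle X\rangle\mid\langle B\rangle$ (as you wrote), even though the statement of Lemma~\ref{lemMsplits2} and Remark~\ref{remconj1} in the paper literally assert the reverse divisibility; your direction is the one consistent with the paper's own use of these statements in the proof.
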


If $|B_1|=3$ and $B_1=B_2 + c$, then there is a codimension 2 subset $\{(f,g) \in \C^{B_1}\x\C^{B_2}: f \sim g \cdot x^c\}$, for which there is $L(B_1)$ common roots; if $L(B_1) \geqslant 3$, then it gives us a component of $N(1,1,1)$.

\begin{proof}
According to Lemma \ref{lemWcodim}, if $S \subset S_{n_1,n_2}$ is an irreducible algebraic subset of codimension $n_1+n_2+c$, then either $W(S)$ is an irreducible algebraic subset of codimension $3+c$, or $W(S)$ is empty; moreover, it is empty if and only if $|B_1| \leqslant 3-n_1$ or $|B_2| \leqslant 3-n_2$.

Suppose that $3+c=2$ and check whether $S_{n_1,n_2}$ has an irreducible component $S$ of codimension $n_1+n_2-1$ and, if it has one, what are $W(S)$ and its closure $\overline{W(S)}$. Here we use the matrix definition of $S_{n_1,n_2}$, see Remark \ref{remM}.

$S_{0,0}$ obviously can not have an irreducible component of codimension $-1$.

$S_{1,0}$: If $|B_1|=2$, then $|B_1| \leqslant 3-n_1=2$, thus any $W(S)$ would be empty anyway; otherwise, $\widehat S^1_1=S^1_1 \cup S^1_2$ is a proper closed subset defined by vanishing of non-zero number of polynomials, thus $S_{1,0}=S^1_1 \cap S^2_0$ can not have an irreducible component of codimension $0$. The same is true for $S_{0,1}$.

$S_{2,0}$: According to Corollary 4.7 of \cite{S21}, $S^i_2$ has codimension at least $2$, thus neither $S_{2,0}$, nor $S_{0,2}$ can not have an irreducible component of codimension $1$. 

$S_{1,1}$: If $|B_i|=2$, then $|B_i| \leqslant 3-n_i=2$, thus any $W(S)$ would be empty anyway; otherwise, according to Corollary 4.7 of \cite{S21}, $S^i_1$ has codimension at least $1$; namely, it lies in the subset defined by the determinants of all 3 × 3-minors $\det_{a,b,c}$ of $M_i(x,y,z)$. Without loss of generality we can suppose that $\min B_1=\min B_2=0$, thus among $\det_{a,b,c}$ are those with $a=0$, as in Corollary \ref{corGCDdet}. Consider GCD of all $\det_{0,b,c}$ for $b$ and $c$ either both from $B_1$, or both from $B_2$. Either $|B_1|=3$ and $B_1=B_2$, or we can find two different polynomials $\det_{0,b,c}$ and deduce from Corollary \ref{corGCDdet} that GCD of all these $\det_{0,b,c}$'s is $\det_{0,n,2n}$, where $n=\langle B_1,B_2\rangle=1$. Consequently, this GCD does not vanish in $Z_3$, thus $S_{1,1}=S^1_1 \cap S^2_1$ can not have an irreducible component of codimension 1.

$S_{1,2}$: If $|B_1|=2$, then $|B_1| \leqslant 3-n_1=2$, thus any $W(S)$ would be empty anyway. Otherwise, if $S$ is an irreducible component of $S_{1,2}$ of codimension 2, then it is a one-dimensional set; from the matrix definition of the strata (see Remark \ref{remM}) we can see that $(x,y,z) \in S_{1,2}$ if and only if $(1,t,u)=(1,y/x,z/x) \in S_{1,2}$, thus $S=\{(c,ct,cu) | c \in \C^\x\}$ for some $(1,t,u) \in S_{1,2}=S^1_1 \cap S^2_2$. According to Lemma 4.6 of \cite{S21}, if $(1,t,u) \in S^2_2$, then the numbers $t$ and $u$ are $k_2$-th roots of 1 and $1 \neq t \neq u \neq 1$. According to Lemma \ref{lemMsplits2} with $X=\{1,t,u\}$, $B=B_1$, one of the following case holds:
\begin{itemize}
    \item $X=X_1 \sqcup X_2$ such that $\langle X_1, X_2 \rangle$ is divisible by $k_1$; if $X_1=\{1,t\}$, then $\langle X_1 \rangle$ is divisible by $k_1$, thus $t$ is $k_1$-th root of unity, but it is already $k_2$-th root of unity, thus $t=1$, which gives us a contradiction; the case of $X_1=\{1,u\}$ and $X_1=\{t,u\}$ give similar contradictions.
    \item $B_1=B_{1,1}\sqcup B_{1,2}$ such that $\langle X \rangle$ is divisible by $\langle B_{1,1}, B_{1,2} \rangle$; it means that $t$ and $u$ are $m$-th roots of unity for $\langle B_{1,1},B_{1,2},B_2\rangle=m$; but $1\neq t\neq u\neq 1$, thus $m \geqslant 3$.
\end{itemize}
In particular, as $|B_1| > 2$, there exists $S_m$ for $m \geqslant 3$ as defined in Theorem \ref{th0res}. If $(f_{1,1}, f_{1,2}, f_2) \in S_m \cap (\C^{B_1\x}\x\C^{B_2\x})$ have a common root $c$, then $ct$ and $cu$ are also its common roots, thus $(f_{1,1}, f_{1,2}, f_2) \in W(S)$. According to Lemma \ref{lemWcodim}, $W(S)$ is an irreducible component of $N(1,1,1)$ of codimension 2. By definition, $S_m$ is an irreducible component of $\sing R_B$ of codimension 2. Consequently, $S_m \cap (\C^{B_1\x}\x\C^{B_2\x}) \subset W(S)$ and $\overline{W(S)}=S_m$. The same is true for $S_{2,1}$.

$S_{2,2}$: According to Lemma 4.9 of \cite{S21}, if $\langle B_1,B_2\rangle=1$, then $S_{2,2}$ is empty.
\end{proof}

\subsection{Proof of the part 1 of Proposition \ref{propresord}}

According to Lemma \ref{corcompoutN11}, to find the irreducible components of $\sing R_B$ we need to find the irreducible components of $\overline{N(1,1)}$. Corollary \ref{corN11} gives the following decomposition:
$$\overline{N(1,1)}=\overline{WS_{00}} \cup \bigcup_{j=1}^{k_1-1} \overline{WE^j_{k_1}} \cup \bigcup_{j=1}^{k_2-1} \overline{WE^j_{k_2}}.$$
where $\overline{WS_{00}}$ and $\overline{WE^j_{k_i}}$ are irreducible subsets of codimension 2, which are not necessarily different from each other, and if $|B_1|=2$, then there is no $\overline{WE^j_{k_2}}$, and similarly in the case $|B_2|=2$.

\begin{defin}
1) The sets $\overline{WE^j_{k_i}}$ and $\overline{WE^{k_i-j}_{k_i}}$ are called opposite.

2) Suppose that there exists $S_m$ for $m \geqslant 2$ obtained from the decomposition $B_1=B_{1,1} \sqcup B_{1,2}$ such that $\langle B_{1,1},B_{1,2},B_2\rangle=m$ and $|B_1|>2$ as defined in Theorem \ref{th0res}. The $(m-1)$ sets 
$$\overline{WE^{k_2/m}_{k_2}}, \ldots, \overline{WE^{jk_2/m}_{k_2}}, \ldots, \overline{WE^{(m-1)k_2/m}_{k_2}}.$$
are called $m$-good (and similarly for $B_2$). The sets that are not $m$-good for any $m$ are called bad.
\end{defin}

Notice that the decomposition of $B_i$ for $i=1$ gives $m$-good subsets $\overline{WE^j_{k_i}}$ for $i=2$.

\begin{lemma}\label{lemSmWE}
1) Opposite sets coincide.

2) All the $m$-good sets coincide with $S_m$.
\end{lemma}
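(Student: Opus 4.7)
The plan is to dispatch Part 1 by a short symmetry observation, and for Part 2 to establish a single inclusion $S_m\subseteq\overline{WE^{jk_2/m}_{k_2}}$ and then upgrade it to an equality using irreducibility and matching dimensions.

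For Part 1, the defining condition of $WE^j_{k_i}$ is symmetric between the two witnessing roots $x$ and $x\epsilon^j_{k_i}$: given $(f,g)\in WE^j_{k_i}$ with witness $x$, set $y:=x\epsilon^j_{k_i}$ and observe $y\cdot\epsilon^{k_i-j}_{k_i}=x$, so that $y$ witnesses membership of $(f,g)$ in $WE^{k_i-j}_{k_i}$. Hence $WE^j_{k_i}=WE^{k_i-j}_{k_i}$ on the nose, and taking closures yields the claim.

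For Part 2, the main step is to show that $S_m\subseteq\overline{WE^{jk_2/m}_{k_2}}$ for every $j=1,\ldots,m-1$ (the $m$-good sets arising from a decomposition of $B_2$ are handled by the same argument with the roles of $B_1$ and $B_2$ swapped). On the Zariski-open subset of $S_m$ on which the common root of $(f_{1,1},f_{1,2},g)$ lies in $\C^\times$, pick such a witness $x$. The hypothesis $\langle B_{1,1},B_{1,2},B_2\rangle=m$ says that inside each of these three support sets all exponent differences are divisible by $m$, so each of $f_{1,1}$, $f_{1,2}$, $g$ has the form $t^{a}\cdot h(t^m)$ for some $a\in\Z$ and univariate polynomial $h$. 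Consequently all three polynomials vanish simultaneously at $x\zeta$ for every $m$-th root of unity $\zeta$; specializing to $\zeta=\epsilon^{jk_2/m}_{k_2}=e^{2\pi i j/m}$ yields $f(x)=g(x)=f(x\zeta)=g(x\zeta)=0$, so $(f,g)\in WE^{jk_2/m}_{k_2}$.

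To upgrade the inclusion to equality, I will invoke Corollary \ref{corN11}, which gives irreducibility and codimension $2$ of $\overline{WE^{jk_2/m}_{k_2}}$ (the nontriviality of this set is ensured by $|B_1|>2$, which is automatic whenever the decomposition $B_1=B_{1,1}\sqcup B_{1,2}$ defining $S_m$ exists), together with Lemma \ref{lemcompN111} and Lemma \ref{lemWcodim}, which present $S_m$ as $\overline{W(S)}$ for an irreducible $S\subseteq S_{1,2}$ of codimension $2$, so that $S_m$ itself is irreducible of codimension $2$. An inclusion between irreducible algebraic sets of the same dimension is an equality, hence $S_m=\overline{WE^{jk_2/m}_{k_2}}$ for every $j$, and the $m-1$ $m$-good sets collapse onto the single irreducible component $S_m$. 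The main conceptual point is the $t^{a}h(t^m)$ factorization: once this is in place, both the closure of the orbit of any common root under the $m$-th roots of unity and the dimension-based rigidity upgrade are routine, so no serious obstacle remains.
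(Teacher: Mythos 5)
Your proof follows essentially the same route as the paper: Part 1 by the symmetry $x\leftrightarrow x\cdot\epsilon^j_{k_i}$, and Part 2 by showing a dense subset of $S_m$ lies in each $WE^{jk_i/m}_{k_i}$ and then upgrading the inclusion to equality via irreducibility and matching codimension. Your justification of the inclusion is in fact slightly more careful than the paper's: by working with a common root of the full triple $(f_{1,1},f_{1,2},g)$ and the factorization $t^a h(t^m)$ you avoid the imprecision in the paper's sentence ``Any $(f,g)\in S_m$\ldots'' (which as written would need the fact, proved only afterward in Lemma \ref{lemcompN11}.1, that for generic $(f,g)$ the common roots of the pair are common roots of the triple). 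One small caveat: your appeal to Lemma \ref{lemcompN111} to get irreducibility and codimension $2$ of $S_m$ covers only $m\geqslant 3$; for $m=2$ one should instead note that $S_2$ is an isomorphic image of the irreducible codimension-$2$ resultant $R_{B_{1,1},B_{1,2},B_2}$ (or invoke Theorem \ref{theor1main2}), after which the dimension-rigidity step goes through unchanged.
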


\begin{proof}
1) If the quotient $x_1/x_2$ of two common roots is $\epsilon^j_{k_i}$, then the quotient $x_2/x_1=\epsilon^{k_i-j}_{k_i}$, thus $WE^j_{k_i}=WE^{k_i-j}_{k_i}$.

2) Any $(f,g) \in S_m$ has the following property: if $f(x)=g(x)=0$, then for any $m$-th roots of unity $\epsilon^j_m$ holds $f(x\cdot\epsilon^j_m)=g(x\cdot\epsilon^j_m)=0$. Moreover, for general $(f,g) \in S_m$ there is a non-zero $x$ such that $f(x)=g(x)=0$. If we define
$$WE^j_m=
\left\{ (f,g) \in \C^{B_1\x} \x \C^{B_2\x} \,|\, \exists x \in \C^\x: f(x)=g(x)=f\left(x\cdot\epsilon^j_m\right)=g\left(x\cdot \epsilon^j_m\right)=0\right\},$$
then the general $(f,g)$ lies in $WE^j_m$ for any $j=1,\ldots,m-1$.

Moreover, if $S_m$ is obtained from the decomposition $B_1=B_{1,1} \sqcup B_{1,2}$, then by the definition holds $\langle B_{1,1},B_{1,2},B_2\rangle=m$, thus $k_2$ is divisible by $m$. In general, $k_i$ is divisible by $m$, thus we can rewrite $\epsilon^j_m$ as $\epsilon^{jk_i/m}_{k_i}$ and $WE^j_m$ as $WE^{jk_i/m}_{k_i}$, which are $m$-good sets.

The set $S_m$ and the closures $\overline{WE^{jk_i/m}_{k_i}}$ are all irreducible components of $\sing R_B$ of codimension 2, the dense subset of $S_m$ lies inside $WE^{jk_i/m}_{k_i}$'s, thus
$$S_m=\overline{WE^{k_i/m}_{k_i}}=\ldots=\overline{WE^{jk_i/m}_{k_i}}=\ldots=\overline{WE^{(m-1)k_i/m}_{k_i}}.$$
\end{proof}

Let us now rewrite the decomposition as
$$\overline{N(1,1)}=\overline{WS_{00}} \cup \bigcup_{\text{bad}} \overline{WE^j_{k_1}} \cup \bigcup_{\text{bad}} \overline{WE^j_{k_2}} \cup \bigcup S_m.$$

\begin{lemma}\label{lemcompN11}
1) A general $(f,g)\in S_m$ for $m\geqslant 2$ has exactly $m$ common roots.

2) If $\overline{WE^j_{k_i}}=S_m$, then $\overline{WE^j_{k_i}}$ is $m$-good.

3) For bad $\overline{WE^j_{k_i}}$, a general $(f,g)\in \overline{WE^j_{k_i}}$ has exactly two common roots.

4) If two bad $\overline{WE^j_{k_i}}$ coincide, then they are opposite.

5) If $|B_1|=3$ and $B_1=B_2 + const$, a general $(f,g)\in \overline{WS_{00}}$ has exactly $L(B_1)$ common roots; otherwise, a general $(f,g)\in \overline{WS_{00}}$ has exactly two common roots.

6) The set $\overline{WS_{00}}$ can not coincide neither with $\overline{WE^j_{k_i}}$, not with $S_m$.
\end{lemma}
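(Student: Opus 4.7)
The plan is to count the common roots of a generic pair in each putative irreducible component and to analyze the arithmetic of their pairwise ratios, in particular which roots of unity arise; these two invariants together will distinguish the components.

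For Part~1, I would exploit that the condition $\langle B_{1,1}, B_{1,2}, B_2\rangle = m$ forces each polynomial $f_\bullet$ in the definition of $S_m$ to be $m$-quasi-homogeneous, so $f_\bullet(\zeta x) = \zeta^{c_\bullet} f_\bullet(x)$ for every $m$-th root of unity $\zeta$, with $c_\bullet$ the shift of the support modulo $m$. Hence the set of common roots of $(f, g) = (f_{1,1} + f_{1,2}, f_2)$ is $\Z/m$-invariant, and every common root produces an orbit of $m$ distinct common roots. A generic pair has exactly one such orbit: the incidence variety of triples $(f_{1,1}, f_{1,2}, f_2)$ sharing two roots in distinct orbits has dimension $|B| + 2 - 6 < |B| - 2 = \dim S_m$, so such pairs form a proper subvariety of $S_m$.

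Parts 2--4 will then follow. If $\overline{WE^j_{k_i}} = S_m$, a generic pair has $m$ common roots (by Part~1) that simultaneously lie in a single orbit under multiplication by $\epsilon^j_{k_i}$; the cyclic subgroup generated by $\epsilon^j_{k_i}$ has order $k_i/\gcd(j, k_i)$, forcing this to equal $m$ and identifying $\overline{WE^j_{k_i}}$ as one of the $m$-good sets (Part~2). By contrapositive, a bad $\overline{WE^j_{k_i}}$ equals no $S_m$; I would then argue that a third generic common root would force the pair into some $S_m$ with $m \geqslant 3$, by applying Lemma~\ref{lemMsplits2} to the triple of common roots to place their pairwise ratios into a common cyclic group, contradicting badness; this is Part~3. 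For Part~4, coincidence of two bad components $\overline{WE^j_{k_i}}$ and $\overline{WE^{j'}_{k_i}}$ means the unordered pair of common roots yields $\epsilon^{j'}_{k_i} \in \{\epsilon^j_{k_i}, \epsilon^{-j}_{k_i}\}$, so $j' \equiv \pm j \pmod{k_i}$ and the components are opposite.

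For Part~5 in the generic case, a pair in $WS_{00}$ has two common roots of non-root-of-unity ratio (by definition of $S_{00}$), and the dimension argument of Part~1 forbids a third. In the exceptional case $|B_1| = 3$ and $B_2 = B_1 + c$, any pair $(f, g)$ with common roots $x_1 \ne x_2$ of non-root-of-unity ratio has independent evaluation functionals at $x_1, x_2$ on $\C^{B_1}$; hence both $f$ and $x^{-c}g$ lie in the $1$-dimensional subspace of $\C^{B_1}$ vanishing at $x_1, x_2$, so $x^{-c}g$ is proportional to $f$ and $(f, g) \in T_0$. Matching dimensions then yields $\overline{WS_{00}} = T_0$, whose generic element shares all $L(B_1)$ roots of $f$.

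Part~6 will follow from the arithmetic contrast: every generic pair in $\overline{WE^j_{k_i}}$ and $S_m$ has its common-root ratios confined to a fixed finite set of roots of unity, while generic pairs in $\overline{WS_{00}}$ have ratios varying continuously (in the main case) or equal to the roots of a generic trinomial (in the exceptional case of Part~5), neither confined to such a locus; this algebraic distinction persists under Zariski closure. The hardest step will be Part~3, where ruling out extra common roots in a bad stratum hinges on the full strength of the arithmetic classification of degenerate generalized Vandermonde matrices in Lemma~\ref{lemMsplits2} and the delicate interplay between the cyclic groups generated by pairwise ratios of the putative common roots.
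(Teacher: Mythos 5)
Your plan is structurally close to the paper's, but it has two genuine gaps that the paper's proof specifically addresses.

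\textbf{Part 1: the claimed $\Z/m$-invariance is false.} You assert that quasi-homogeneity of each of $f_{1,1}, f_{1,2}, f_2$ makes the common-root set of the \emph{pair} $(f_{1,1}+f_{1,2},\, f_2)$ invariant under multiplication by $m$-th roots of unity. But the shifts $c_{1,1}$ and $c_{1,2}$ of $B_{1,1}, B_{1,2}$ modulo $m$ are necessarily different (otherwise $B_1=B_{1,1}\sqcup B_{1,2}$ would lie in one residue class mod $m$ and $\langle B_1,B_2\rangle$ would be divisible by $m$, contradicting $\langle B_1,B_2\rangle=1$). Hence $f(\zeta x)=\zeta^{c_{1,1}}f_{1,1}(x)+\zeta^{c_{1,2}}f_{1,2}(x)$ need not vanish when $f(x)=0$, unless $f_{1,1}(x)=f_{1,2}(x)=0$ already. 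So the orbit structure on the common roots of the pair holds only for those roots that are common to all three polynomials; you still have to rule out extra common roots of the pair that are \emph{not} roots of the triple. The paper does exactly this with the one-parameter deformation $f_{1,1}+t f_{1,2}$: for generic $t$ the common roots of $(f_{1,1}+tf_{1,2},f_2)$ depending on $t$ must be finitely many, so they avoid a generic fiber. Your dimension count of triples sharing two roots ``in distinct orbits'' addresses a different incidence condition and does not close this gap.

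\textbf{Parts 3, 5, 6: you cannot invoke Lemma \ref{lemMsplits2} directly.} That lemma applies to a \emph{triple of roots of unity}. A priori, if a generic element of a bad $\overline{WE^j_{k_i}}$ had a third common root, nothing says the extra ratio is a root of unity. The paper closes this by routing through Lemma \ref{lemcompN111}: a codimension-$2$ irreducible component of $\overline{N(1,1)}$ inside $N(1,1,1)$ must be the $W$-image of a codimension-$(n_1+n_2-1)$ piece of some $S_{n_1,n_2}\subset Z_3$; the relevant case $S_{1,2}$ forces (by a prior result of~[S21]) the two extra ratios to be $k_2$-th roots of unity, and only then does Lemma \ref{lemMsplits2} classify the possibilities and produce an $S_m$. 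Your sketch goes straight to the Vandermonde lemma and misses the step that turns ``a third common root'' into ``a third common root whose ratio to the others is a root of unity''. The same gap affects your treatment of Part 5 in the non-exceptional case (where ``the dimension argument of Part 1'' does not by itself preclude $\overline{WS_{00}}$ from coinciding with a codimension-$2$ component of $N(1,1,1)$) and Part 6.

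Your treatment of the exceptional case of Part 5 (reduction to $T_0$ via the $1$-dimensional kernel of $\psi^1_x$) is correct and matches the paper's intent. Part 2 and the opposite-pair argument in Part 4 are essentially right, but they lean on Part 3, so they inherit the gap there.
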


In particular, we can calculate the number of irreducible components: if both $|B_i|>2$, then it is the number of positive $j\leqslant k_i/2$ that is not divisible by an integer of the form $k_i/m$ for $m$ from some $S_m$, plus the number of $S_m$ that are correctly defined, plus a number between 1 and 5 for $WS_{00}$, $S_0$, $S_\infty$, $T_1$ and $T_2$, depending on which of them are correctly defined.

\begin{proof}
1) If $S_m$ is obtained from the decomposition $B_1=B_{1,1} \sqcup B_{1,2}$, then by the definition $\langle B_{1,1}, B_{1,2}, B_2\rangle=m$. By the properties of the resultant $R_{B_{1,1},B_{1,2},B_2}$, for a general $(f_{1,1}, f_{1,2}, f_2) \in \C^{B_{1,1}}\x\C^{B_{1,2}}\x\C^{B_2}$ the polynomials $f_{1,1}, f_{1,2}$ and $f_2$ have exactly $m$ common roots.

Let us prove that for a general $(f_{1,1}, f_{1,2}, f_2) \in \C^{B_{1,1}}\x\C^{B_{1,2}}\x\C^{B_2}$, all the common roots of $f_{1,1}+f_{1,2}$ and $f_2$ are also the common roots of $f_{1,1}, f_{1,2}$ and $f_2$. Indeed, consider the common roots of $f_{1,1}+tf_{1,2}$ and $f_2$: if the roots do not depend on $t$, then they are common roots of $f_{1,1}, f_{1,2}$ and $f_2$; otherwise the roots do depend on $t$, thus except for a finite number of values $t$ the polynomials $f_{1,1}+tf_{1,2}$ and $f_2$ do not have other common roots then the common roots of $f_{1,1}, f_{1,2}$ and $f_2$.

2) Suppose that $WE^j_{k_i}$ is not $m$-good: for a general $(f,g) \in S_m$ there are common roots $x, x\cdot\epsilon^1_m, \ldots, x\cdot\epsilon^{m-1}_m$ and there are also common roots $y, y\cdot\epsilon^j_{k_i}$, totally at least $(m+1)$ common root, which gives us a contradiction to the previously proven.

3) Suppose that a general $(f,g) \in WE^j_{k_i}$ has more than 2 common roots, thus $(f,g)$ lies in $N(1,1,1)$ and $\overline{WE^j_{k_i}}$ is equal to the closure of an irreducible component of $N(1,1,1)$ of codimension 2. If $|B_1|=3$ and $B_1=B_2 + const$, then $k_1=k_2=1$, thus there is no WE's. Otherwise by Lemma \ref{lemcompN111} we have that $\overline{WE^j_{k_i}}$ is equal to $S_m$ for some $m \geqslant 2$, which gives us a contradiction to the previously proven.

4) Suppose that $\overline{WE^{j_1}_{m_1}}=\overline{WE^{j_2}_{m_2}}$, where $m_i \in \{k_1,k_2\}$ and $j_i \in \{1, \ldots, m_i-1\}$. Then for a general $(f,g)$ from this subset there are $x_1$ and $x_2 \in \C^\x$ such that the numbers $x_1,\  x_1\cdot\epsilon^{j_1}_{m_1},\  x_2$ and $x_2\cdot\epsilon^{j_2}_{m_2}$ are common roots of $f$ and $g$. Let us notice that $m_1$ and $m_2$ are either equal, or coprime. If $\overline{WE^{j_1}_{m_1}}$ and $\overline{WE^{j_2}_{m_2}}$ are neither identical, not opposite, then $(f,g)$ has at least three non-zero common roots, thus lies in $N(1,1,1)$. If $|B_1|=3$ and $B_1=B_2 + const$, then there is no WE's; otherwise $\overline{WE^{j_1}_{m_1}}=\overline{WE^{j_2}_{m_2}}$ is equal to $S_m$ for some $m \geqslant 2$, which gives us a contradiction.

5, 6) We can prove it as in the cases of $WE^j_{k_i}$.
\end{proof}

\begin{utver}\label{propdoublestrara}
In the setting of Theorem \ref{th0res}, the irreducible components of the singular locus $\sing R_B$ are $T_0,T_1,T_2,S_0,S_m,S_\infty,S,S^i_j$ defined (when applicable) therein, and they are pairwise different.
\end{utver}

\begin{proof}
According to Lemma \ref{corcompoutN11}, the irreducible components of the singular locus $\sing R_B$ are the irreducible components of $\overline{N(1,1)}$ and the components $S_0$, $S_\infty$, $T_1$ and $T_2$, if applicable.

According to Corollary \ref{corN11}, there is decomposition $$\overline{N(1,1)}=\overline{WS_{00}} \cup \bigcup_{j=1}^{k_1-1} \overline{WE^j_{k_1}} \cup \bigcup_{j=1}^{k_2-1} \overline{WE^j_{k_2}}.$$
where $\overline{WS_{00}}$ and $\overline{WE^j_{k_i}}$ are irreducible subsets of codimension 2, which are not necessarily different from each other, and some of them are absent if $|B_i|=2$.

According to Lemma \ref{lemSmWE}, opposite $WE$'s coincide, while $m$-good $WE$'s coincide with $S_m$. According to Lemma \ref{lemcompN11}, there are no more coincidences between the components of $\overline{N(1,1)}$. Thus $\overline{WE^j_{k_i}}$ are exactly $S^i_j$ from the part 6 of Theorem \ref{th0res}, while $\overline{WS_{00}}$ is either $T_0$ from the part 5 (if $|B_1|=3$ and $B_1=B_2+const$) or $S$ from the part 6 (otherwise), and all these components are different.

Moreover, $T_1$ and $T_2$ are lying outside $\C^{B_1\x}\x\C^{B_2\x}$, thus they can not coincide with the components of $\overline{N(1,1)}$.

The intersection $S_0 \cap R_B$ is $R_{B'}$ for $B'_i=B_i\setminus\{\min B_i\}$, thus its general element can not have more common roots and $S_0$ can not coincide with the components of $\overline{N(1,1)}$, and similarly with $S_\infty$.

Consequently, all the components written in the statement of this proposition are different.
\end{proof}

Let us denote the union of $S=\overline{WS_{00}}$ (unless $|B_1|=3$ and $B_1=B_2+const$) and of all $S^i_j=\overline{WE^j_{k_i}}$ by $S_1$. So now we have the decomposition that is required. QED.

\subsection{Proof of the part 2 of Proposition \ref{propresord}}

\begin{defin}
Let us define the subset $\widetilde R_B \subset \C^{B_1}\x\C^{B_2}\x\CP^1$ to be the closure of
$$\{(f_1,f_2,x) \in \C^{B_1} \x \C^{B_2} \x \C^\x \,|\, f_1(x)=f_2(x)=0\}.$$
\end{defin}

The set $\widetilde{R_B}$ is smooth, $R_B$ is its image under the coordinate projection, and they satisfy the following properties:
\begin{lemma}[See Lem.2.7,2.9 and Th.4.12 of \cite{S21}]\label{lemtransvers}Suppose that $\langle B_1, B_2 \rangle=1$.

1) The map $\widetilde R_B \to R_B$ is not local embedding at $(f_1,f_2,x)$ if and only if $x$ is a common root of $(f_1,f_2)$ of multiplicity more than one.

2) Let $x_1$ and $x_2$ be two different common roots of $(f_1,f_2)$ of multiplicity 1. The corresponding local branches of $R_B$ are transversal hypersurfaces, unless $x_1$ and $x_2$ are both from $\C^\x$ and there is $i$ such that both $x_1$ and $x_2$ have multiplicity at least 2 for $f_i$.

3) The set of $(f_1,f_2)$ that have two different common roots $x_1$ and $x_2$ such that both $x_1$ and $x_2$ have multiplicity at least 2 for the same $f_i$, has codimension at least 3.
\end{lemma}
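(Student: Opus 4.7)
The strategy is to analyze all three parts via the local geometry of $\widetilde R_B \subset \C^B \times \CP^1$, which in an affine chart is cut out by $f_1(x) = f_2(x) = 0$. For Part 1, I would examine the differential of $\pi \colon \widetilde R_B \to R_B$: the tangent space $T_{(f_1, f_2, x)} \widetilde R_B$ is $\ker(df_1, df_2)$, so a purely vertical tangent $(0, 0, \delta x)$ with $\delta x \neq 0$ lies in it iff $f_1'(x) = f_2'(x) = 0$, iff $x$ is a multiple common root; this is exactly when $d\pi$ fails to be injective.

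For Part 2, given two simple common roots $x_1, x_2$ of $(f_1, f_2)$, Part 1 makes each of the two local branches of $R_B$ at $(f_1, f_2)$ smooth. Eliminating $\delta x$ in the tangent-space equations, the tangent hyperplane to the branch at $x_i$ is $\ker \omega_i$ with
$$\omega_i(\delta f_1, \delta f_2) \;:=\; f_2'(x_i)\, \delta f_1(x_i) - f_1'(x_i)\, \delta f_2(x_i).$$
To prove transversality it suffices to show $\omega_1, \omega_2$ are linearly independent. If $\omega_2 = c\,\omega_1$, projecting to the summand $\C^{B_j}$ yields $f_{3-j}'(x_2)\,\mathrm{ev}_{x_2}|_{\C^{B_j}} = c\, f_{3-j}'(x_1)\,\mathrm{ev}_{x_1}|_{\C^{B_j}}$. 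The evaluations $\mathrm{ev}_{x_1}|_{\C^{B_j}}$ and $\mathrm{ev}_{x_2}|_{\C^{B_j}}$ are proportional precisely when $(x_1/x_2)^{\langle B_j\rangle} = 1$. Since $\gcd(\langle B_1 \rangle, \langle B_2 \rangle) = \langle B_1, B_2 \rangle = 1$ and $x_1 \neq x_2$ in $\C^\x$, the ratio $x_1/x_2$ fails this test for at least one $j$, and there we must have $f_{3-j}'(x_1) = f_{3-j}'(x_2) = 0$, which is exactly the lemma's exception with $i = 3-j$. If $x_1 \in \{0, \infty\}$, working in a toric chart shows $\omega_1$ is supported on just two coordinates (the extremal-monomial coefficients), whereas $\omega_2$, built from evaluation at $x_2 \in \C^\x$, is nonzero at every coordinate of $\C^B$, so independence is automatic; the mixed $\{0, \infty\}$ case puts $\omega_1$ and $\omega_2$ on disjoint coordinate subspaces and is even simpler.

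For Part 3, I would use an incidence-variety dimension count. Fix $i = 1$ (the case $i = 2$ is symmetric) and set
$$Y := \bigl\{\, (f_1, f_2, x_1, x_2) \in \C^B \times Z_2 \;\big|\; f_1(x_j) = f_1'(x_j) = f_2(x_j) = 0,\ j = 1, 2 \,\bigr\}.$$
For generic $(x_1, x_2) \in Z_2$, the four conditions on $f_1$ form a generalized Wronskian of maximal rank whenever $|B_1| \geq 4$, and the two on $f_2$ are independent whenever $|B_2| \geq 2$. The fiber over $(x_1, x_2)$ thus has codimension $6$ in $\C^B$, giving $\dim Y \leq \dim \C^B - 4$, so the image of the projection $Y \to \C^B$ has codimension $\geq 4 \geq 3$. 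For smaller $|B_i|$ one checks directly that the relevant polynomial cannot have the required pair of double roots without vanishing identically, which is excluded by working inside $\C^{B_1\x} \x \C^{B_2\x}$.

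The principal obstacle is the transversality analysis in Part 2: one must exploit the coprimality $\langle B_1, B_2 \rangle = 1$ in precisely the right way and track the chart changes at $x_i \in \{0, \infty\}$ to see that the exceptional case really demands $x_1, x_2 \in \C^\x$. Parts 1 and 3 then reduce to routine Jacobian and dimension bookkeeping once the local structure of $\widetilde R_B$ is understood.
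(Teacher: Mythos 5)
The paper itself gives no proof of this lemma (it is cited verbatim from \cite{S21}), so I evaluate your proposal on its own terms. Parts 1 and 2 are essentially right. The tangent-space computation and the elimination producing $\omega_i$ are exactly the natural approach, and the key move in Part 2 --- projecting the hypothesized proportionality $\omega_2 = c\,\omega_1$ to each summand $\C^{B_j}$ and reading off the Vandermonde criterion $(x_1/x_2)^{\langle B_j\rangle}=1$ --- correctly exploits $\langle B_1,B_2\rangle=1$. One small imprecision: in the $x_1\in\{0,\infty\}$ case you assert that $\omega_2$ is nonzero at every coordinate, but if $x_2$ has multiplicity $\geqslant 2$ for (say) $f_2$ then $f_2'(x_2)=0$ and the whole $\C^{B_1}$-block of $\omega_2$ vanishes. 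The conclusion survives (at least one block of $\omega_2$ has all entries nonzero while $\omega_1$ lives on two extremal-coefficient coordinates), but the statement as written is false and should be fixed.

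Part 3 has a genuine gap. You bound $\dim Y$ by the codimension of the fiber over a \emph{generic} $(x_1,x_2)\in Z_2$, but the rank of the Hermite--Vandermonde matrix cutting out that fiber can drop on positive-dimensional loci in $Z_2$ --- precisely where $x_1/x_2$ is a root of unity of order dividing $\langle B_1\rangle$ or $\langle B_2\rangle$. Concretely, take $B_1=\{0,2,4\}$ and $B_2=\{0,1,3\}$ (so $\langle B_1\rangle=2$, $\langle B_1,B_2\rangle=1$): for $f_1 = \lambda(t^2-x_2^2)^2 \in \C^{B_1}$ the four conditions on $f_1$ at $x_1=-x_2,\,x_2$ collapse to two, and $f_2 = c_3\, t(t^2-x_2^2)\in\C^{B_2}$ vanishes at $\pm x_2$. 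This gives a 3-dimensional component inside $\C^B\cong\C^6$, i.e. codimension exactly $3$, so the bound ``$\geqslant 4$'' is simply false and the reduction ``$\geqslant 4\geqslant 3$'' evaporates; your fallback for ``smaller $|B_i|$'' is defeated by the same example. A correct proof must stratify by the rank defect and show that each defect is exactly offset by the codimension of the corresponding degeneracy stratum of $Z_2$ --- this is again arithmetic in $\langle B_i\rangle$ and roots of unity, and it yields the sharp codimension $3$. Separately, your incidence variety only ranges over $(x_1,x_2)\in Z_2\subset(\C^\times)^2$, whereas the lemma also covers $x_j\in\{0,\infty\}$; those cases are easier but must be addressed in the chart where $\ord_0$ or $\ord_\infty$ is computed.
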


Let $\Sigma$ be the set of $(f,g)$ such that either there is a common root of $(f,g)$ of multiplicity more than one, or there are two common roots of multiplicity at least 2 for the same $f_i$. It has codimension at least 3 and, by Lemma \ref{lemtransvers}, in a small neighborhood of every point $(f_1,f_2)$ of the strata $\tilde S_i:=S_i\setminus\Sigma,\, i=1,2,3,\ldots$, and $\tilde T_j:=T_j\setminus\Sigma,\, j=0,1,2$, the resultant $R_B$ is the union of several smooth pairwise transversal hypersurfaces, whose number is equal to the number of common roots of $(f_1,f_2)$. For the sets $S_i$ (including $S_1$) and $T_0$ these numbers are known from Lemma \ref{lemcompN11}, and for the sets $T_1$ and $T_2$ they are obvious. QED.

\section{Proof of Theorems \ref{th0res} and \ref{th0proj}: Whitney strata of resultants}\label{ssproofs}

\subsection{The Whitney condition (b) and transversal singularity types.} Recall that the transversal singularity type of an algebraic set $R\subset\C^n$ at a smooth point $x$ of its algebraic subset $S$ is the topological type of the singularity $R\cap H$ at $x$, where $H\subset\C^n,\, \dim H=\codim S$, is a germ of a smooth variety, transversally intersecting $S$ at $x$. 
In general the transversal singularity type at a point $x$ is not correctly defined, as the singularity type of $R\cap H$ may depend on the choice of $H$. However, it is a matter of a nice choice of $S\subset R$ to ensure that the transversal singularity type exists and even does not depend on the choice of $x\in S$. 

\begin{exa}\label{exawhitn}
Proposition \ref{propresord} implies that the transversal singularity type of the resultant $R_B$ at a generic point of the strata $\tilde S_i,\, i=1,2,3,\ldots$, and $\tilde T_j,\, j=1,2$ is an ordinary multiple point of order specified in Table 1.
Indeed, denoting any of these strata by $S$, Proposition \ref{propresord}.2 implies that, near any point $x\in S$, the resultant is the union of several pairwise transversal hypersurfaces $H_i\supset S$. 
If a germ of a smooth surface $H$ is transversal to $S$ at $x$, then it is transversal to each $H_i$. Thus $H$ intersects the resultant by the pairwise transversal curves $H\cap H_i$. They by definition form an ordinary multiple point.
\end{exa}

We now want to understand the transversal singularity type of the resultant at the remaining strata $S_0$ and $S_\infty$.
\begin{rem}\label{s0rem}
1. In what follows, with no loss of generality we discuss only $S_0$, because the stratum $S_\infty$ of the resultant $R_{(B_1,B_2)}\subset\C^{B_1}\times\C^{B_2}$ is the stratum $S_0$ of the resultant $R_{(-B_1,-B_2)}\subset\C^{-B_1}\times\C^{-B_2}$ under the natural identification $\C^{B_i}\xrightarrow{\sim}\C^{-B_i}$, sending a polynomial $f(t)$ to $f(t^{-1})$.

2. In what follows, with no loss of generality we assume that $\min B_1=\min B_2=0$, because the resultant $R_{(B_1,B_2)}\subset\C^{B_1}\times\C^{B_2}$ is the resultant $R_{(B_1-\min B_1,B_2-\min B_2)}\subset\C^{B_1-\min B_1}\times\C^{B_2-\min B_2}$ under the natural identification $\C^{B_i}\xrightarrow{\sim}\C^{B_i-\min B_i}$, sending a polynomial $f(t)$ to $f(t)/t^{\min B_i}$.
\end{rem}
If the transversal singularity type of the resultant $R_B$ at a generic point of $S_0$ exists, then it is represented by the $B$-sparse curve singularity, as the following example shows.
\begin{exa}\label{exas0} Under the assumptions of Remark \ref{s0rem} 
$$S_0=\{(f,g)\,|\,f(0)=g(0)=0\}.$$
A point $(f,g)\in S_0$ is contained in an affine plane $H_{f,g}:=\{(f-\alpha,g-\beta)\,|\,\alpha,\beta\in\C\}$ transversal to $S_0$.
The intersection of $H_{f,g}$ with $R_B$ is tautologically the image of the map $\C\to H_{f,g}$ given by $$\alpha=f(t),\,\beta=g(t).\eqno{(*)}$$
The singularity of this image at $(f,g)\in H_{f,g}$ is by definition the $B$-sparse curve singularity, once 
$(f,g)$ 

1) is 0-nondegenerate (see Definition \ref{def0nondeg}), and

2) has no common root in $\C\setminus 0$. 
\end{exa}
In what follows we make use of the notion of Whitney stratification and the Thom-Mather theorem. We recommend e.g. \cite{trotman} as a general reference.
\begin{utver}\label{s0whitn}
At every point of the Zariski open subset $\tilde S_0\subset S_0$ defined by the conditions (1) and (2) of Example \ref{exas0}, and at every point of the strata $\tilde S_i,\, i=1,2,3,\ldots$, and $\tilde T_j,\, j=1,2$ defined by Proposition \ref{propresord}, the resultant satisfies the Whitney condition (b) with respect to $S_0$.
\end{utver}
\begin{proof}
For the strata of Proposition \ref{propresord}, this follows from that proposition by the definition of the Whitney condition (b). So we focus on the stratum $\tilde S_0$ in the setting of Example \ref{exas0}.

All the curves in the family $H_{f,g}\cap R_B$ parameterized by $(f,g)\in S_0$ have the same Milnor number by Theorem \ref{thsparse} and the same multiplicity $\min(B_1\cup B_2\setminus\{0\})$. By a classical result of Zariski (\cite{zariski}, \cite{zariski2}), a family of reduced plane curve germs with a constant Milnor number and multiplicity defines a hypersurface satisfying the Whitney condition (b) with respect to its singular locus.
\end{proof}
\begin{sledst}\label{slwhitn}
The transversal singularity type of the resultant $R_B$ along the stratum $\tilde S_0$ is the $B$-sparse curve singularity. 
\end{sledst}
\begin{proof}
By the Thom-Mather theorem 
(see e.g. \cite{trotman}), Proposition \ref{s0whitn} implies that the transversal singularity type is correctly defined. By Example \ref{exas0}, it is the $B$-sparse curve singularity. \end{proof}

\subsection{Preimages of stratified sets under generic polynomial maps.}

Recall that a smooth map of manifolds $f:M\to N$ is said to be transversal to a submanifold $V\subset N$ at a point $p\in M,\, f(p)\in U$, if the following equivalent conditions are satisfied:

- $f_*T_p M+T_{f(p)}U=T_{f(p)}N$;

- if $U$ is defined by a regular system of equations $g_1=\cdots=g_k=0$ near $f(p)$, then $f^{-1}(U)$ is defined by the regular system of equations $g_1\circ f=\cdots=g_k\circ f=0$ near $p$.

\begin{theor}\label{thtransv}
Let $V\subset \C^n$ be a smooth semialgebraic set (i.e. the difference of two algebraic sets).

1) Let $A_1,\ldots,A_m\subset \Z^n$ be finite sets. Then, for a generic tuple of polynomials $g_i\in\C^{A_i}$, the smooth complete intersection given by the equations $g_1=\ldots=g_m=0$ is transversal to $V\cap\CC^n$ (and in particular their intersection is smooth).

2) Let $B_1,\ldots,B_n\subset\Z^m$ be finite sets. Then, for a generic tuple of polynomials $f_i\in\C^{B_i}$, the map $f_\bullet:\CC^m\to\C^n$ is transversal to $V$ (and in particular the preimage of $V$ is smooth).
\end{theor}
\begin{proof}
Part 1 is well known, see e.g.Theorem 10.7 in \cite{el}. Part 2 for $V\subset\CC^n$ is equivalent to Part 1, applied to the set $\CC^m\times V\subset\CC^m\times\CC^n$ and the graph of the map $f_\bullet$, given as the complete intersection of the hypersurfaces $f_i(x)-y_i=0$. (Here and in what follows, $x=(x_1,\ldots,x_m)$ are the coordinates on $\CC^m$, and $y=(y_1,\ldots,y_n)$ on $\C^n$.)

In the general case, choose arbitrary $I\subset\{1,\ldots,n\}$, and prove that the map $f_\bullet$ is transversal to $V$ at every point of the coordinate torus $$\CC^I=\{y\,|\,y_i=0\Leftrightarrow i\notin I\}.$$
For this, split $V\cap\CC^I$ into smooth strata $V_\alpha$, and note that, according to Part 1, the complete intersection given by the equations
$$ f_i(x)-y_i=0,\,i\in I,\,\mbox{ and }\,f_j(x)=0,\,j\notin I,$$
is transversal to the smooth set
$$\CC^m\times V_\alpha\subset\CC^m\times\CC^I.$$
This implies that the map $f_\bullet$ is transversal to the smooth set $V_\alpha$, and thus to its ambient smooth set $V$ at every point of $V_\alpha$.
\end{proof}

Applying this fact to every stratum of a stratified algebraic set, we have the following.
\begin{sledst}\label{sltransv1} In the setting of Theorem \ref{thtransv}.2, let $W\subset\C^n$ be a union of smooth semialgebraic strata $W_\alpha$.

1) The map $f_\bullet$ is transversal to every stratum, in particular the preimage $U:=f_\bullet^{-1}(W)$ is the union of smooth semialgebraic subsets $U_\alpha:=f_\bullet^{-1}(W_\alpha)$. 

2) If $W_\alpha$ is a Whitney stratification of $W$ (or, more generally, if $W$ has a certain transversal singularity type $\mathcal{S}_\alpha$ along every its stratum $W_\alpha$), then $U_\alpha$ is a Whitney stratification of $U$ (or, respectively, $U$ has the transversal singularity type $\mathcal{S}_\alpha$ along every its stratum $U_\alpha$).
\end{sledst}

If the dimension of $W$ is not too high, this can be stated simpler.

\begin{sledst}\label{sltransv2}
In the setting of Theorem \ref{thtransv}.2, let $W$ be a $p$-dimensional algebraic set with  irreducible components of the singular locus $S_\alpha$.

Let $\mathcal{S}_\alpha$ be the transversal singularity type of $W$ at a generic point of $S_\alpha$. Denote by $q<p$ the dimension of the singular locus, $\max\dim S_\alpha$.

1) The preimage $U:=f^{-1}_\bullet(W)$ has the same codimension $n-p$ as $W$ (with $m<n-p$ meaning $U=\varnothing$).

2) If $m<n-q$, then the image of $f$ does not intersect the singular locus of $W$, and $U$ is smooth.

3) If $m=n-q$, then the image of $f$ transversally intersects the singular locus of $W$ at isolated generic points, and the preimage $U$ has isolated singularities of topological types $\mathcal{S}_\alpha$.
\end{sledst}

\subsection{A connection between singularities of resultants and curve projections}
In order to finish the proof of Theorem \ref{th0res} and prove Theorem \ref{th0proj}, we first establish a connection between their settings by means of Corollary \ref{sltransv2}. 
On one hand, we have the projection $C\subset\CC^2$ of a sparse spatial curve defined by generic polynomial equations $(g_1,g_2)\in\C^{A_1}\oplus\C^{A_2},\,A_i\in\Z^3$. On the other hand, proceeding to the coordinate projections $B_i\subset\Z^1$ of the sets $A_i$, we have the resultant $R_B\subset\C^{B_1}\oplus\C^{B_2}$. How are they connected?

\begin{utver}\label{propresproj}
In this setting, consider the map $$G:\CC^2\to\C^{B_1}\oplus\C^{B_2},\,(y_1,y_2)\mapsto\bigl(g_1(\bullet,y_1,y_2),g_2(\bullet,y_1,y_2)\bigr).\eqno{(*)}$$

1) the map $G$ transversally intersects the singular locus $\sing R_B$ at its generic points;

2) the curve $C$ is the preimage $G^{-1}(R_B)$;

3) the singularities of $C$ are the preimages of $\sing R_B$;

4) the type of every such singular point $y\in C$ is the transversal singularity type of $\sing R_B$ at its point $G(y)$.
\end{utver}
This is a special case of Corollary \ref{sltransv2}. We shall use it both ways to finish the proof of the main theorems.

\vspace{1ex}

{\it Proof of Theorem \ref{th0proj}.}
1. By Proposition \ref{propresproj}.1\&4, the curve $C$ has singularities exactly at the preimages of $\sing R_B$ under the map $(*)$, and the type of each singularity coincides with the transversal singularity type of the resultant at a generic point of the corresponding component of the singular locus. These singularity types are described by Example \ref{exawhitn} and Corollary \ref{slwhitn} (and we use the notation therefrom to finish the proof).

2. Since the preimages of the singularity strata $\tilde S_i,\, i\ne 1$, and $\tilde T_j,\, j=0,1,2$ of $\sing R_B$ are given by the generic systems of equations $(1-4)$ of Theorem \ref{th0proj}, the number of points in each of these preimages (i.e. the number of singular points of  the corresponding type in the curve $C$) can be found from the BKK formula \cite{bernst}. The answer is the mixed volume of the Newton polytopes of the equations, specified in the statement of Theorem \ref{th0proj}.
\hfill$\square$

\vspace{1ex}

{\it Proof of Theorem \ref{th0res}.} All statements except for the degrees of the strata in Table 1 are covered by Propositions \ref{propresord} and \ref{propdoublestrara}, Example \ref{exawhitn} and Corollary \ref{slwhitn}. In order to compute the degrees of the strata, in the setting of Theorem \ref{th0proj}, choose $$A_i:=B_i\times(\mbox{the standard triangle in }\Z^2).$$ For this choice, the image of the map $(*)$ of Proposition \ref{propresproj} is a generic codimension 2 plane $P$ in $\C^{B_1}\oplus\C^{B_2}$. The sought degree of each stratum equals the number of intersections  of this stratum with $P$, i.e. the number of singularities of the corresponding type in the curve $C$. The number of singularities can be computed from Theorem \ref{th0proj} and Observation \ref{obnumdouble}. It is directly verified that, for $A_i$ as above, the numbers of singularities coincide with the numbers in the second column of Table 1. \hfill$\square$

\end{document}